\newcommand{\bk}{\Bbbk}
\newcommand{\Z}{\mathbb{Z}}
\newcommand{\C}{\mathbb{C}}
\newcommand{\bA}{\mathbb{A}}
\newcommand{\bP}{\mathbb{P}}
\newcommand{\Gm}{{\mathbb{G}_{\mathrm{m}}}}
\newcommand{\Gme}[1]{{\mathbb{G}_{\mathrm{m}}^{#1}}}
\newcommand{\Aff}{{\bA^n}}
\newcommand{\Affs}{(\bA^n)}
\newcommand{\Gr}{\mathrm{Gr}}
\newcommand{\cGr}{\mathcal{G}\mathit{r}}
\newcommand{\ocGr}{\overline{\cGr}}
\newcommand{\Fl}{\mathrm{Fl}}
\newcommand{\PGL}{\mathrm{PGL}}
\newcommand{\gen}{\upeta}
\newcommand{\Nilp}{\mathrm{N}}
\newcommand{\ubk}{\underline{\bk}}
\newcommand{\Perv}{\mathrm{Perv}}
\newcommand{\IC}{\mathrm{IC}}
\DeclareMathOperator{\gr}{gr}
\DeclareMathOperator{\im}{im}
\newcommand{\sS}{\mathscr{S}}
\newcommand{\Parity}{\mathrm{Parity}}
\newcommand{\bi}{\mathbf{i}}
\newcommand{\bj}{\mathbf{j}}
\newcommand{\cJ}{\mathcal{J}}
\newcommand{\cF}{\mathcal{F}}
\newcommand{\cG}{\mathcal{G}}
\newcommand{\mon}{{\mathrm{mon}}}
\newcommand{\Dmix}{D^{\mathrm{mix}}}
\newcommand{\Mon}{\mathrm{Mon}}
\newcommand{\mix}{{\mathrm{mix}}}
\newcommand{\bxi}{\bar\xi}
\newcommand{\sr}{\mathsf{r}}
\newcommand{\cE}{\mathcal{E}}
\newcommand{\aep}{\dot\epsilon}
\newcommand{\aet}{\dot\eta}
\newcommand{\bEp}{\cE^\oplus}
\newcommand{\bEppp}{\cE^{\oplus\prime\prime}}
\newcommand{\bep}{\epsilon}
\newcommand{\bet}{\eta}
\newcommand{\bEr}{\mathrm{E}^\oplus}
\newcommand{\bepr}{\underline{\upepsilon}}
\newcommand{\betr}{\underline{\upeta}}
\newcommand{\ilt}{\iota^{\lhd}}
\newcommand{\irt}{\iota^{\rhd}}
\newcommand{\plt}{p^{\lhd}}
\newcommand{\prt}{p^{\rhd}}
\newcommand{\eprt}{\epsilon^{\rhd}}
\newcommand{\etalt}{\eta^{\lhd}}
\newcommand{\BErt}{\boldsymbol{\cE}^{\rhd}}
\newcommand{\BElt}{\boldsymbol{\cE}^{\lhd}}
\newcommand{\Bep}{\boldsymbol{\epsilon}}
\newcommand{\Bet}{\boldsymbol{\eta}}
\newcommand{\Bepr}{\boldsymbol{\bepr}}
\newcommand{\Betr}{\boldsymbol{\betr}}
\newcommand{\Biota}{\boldsymbol{\iota}}
\newcommand{\Bp}{\boldsymbol{p}}
\newcommand{\Bilt}{\boldsymbol{\iota}^{\lhd}}
\newcommand{\Birt}{\boldsymbol{\iota}^{\rhd}}
\newcommand{\Bplt}{\boldsymbol{p}^{\lhd}}
\newcommand{\Bprt}{\boldsymbol{p}^{\rhd}}
\newcommand{\Beprt}{\Bep^{\rhd}}
\newcommand{\Betlt}{\Bet^{\lhd}}
\newcommand{\Brho}{\boldsymbol{\rho}}
\newcommand{\lowerdot}{\begin{tikzpicture}[scale=0.3,thick,baseline]
 \draw (0,-0.1) to (0,0.9);
 \node at (0,-0.1) {$\bullet$};
\end{tikzpicture}}
\newcommand{\upperdot}{\begin{tikzpicture}[scale=0.3,thick,baseline]
 \draw (0,-0.1) to (0,0.9);
 \node at (0,0.9) {$\bullet$};
\end{tikzpicture}}
\newcommand{\diagramvert}{\begin{tikzpicture}[scale=0.3,thick,baseline]
 \draw (0,-0.1) to (0,0.9);
\end{tikzpicture}}
\newsavebox\blacklowerbox\savebox\blacklowerbox{{\lowerdot}}
\newsavebox\bluelowerbox\savebox\bluelowerbox{{\color{blue}\lowerdot}}
\newsavebox\redlowerbox\savebox\redlowerbox{{\color{red}\lowerdot}}
\newsavebox\greenlowerbox\savebox\greenlowerbox{{\color{green}\lowerdot}}
\newsavebox\blackupperbox\savebox\blackupperbox{{\upperdot}}
\newsavebox\blueupperbox\savebox\blueupperbox{{\color{blue}\upperdot}}
\newsavebox\redupperbox\savebox\redupperbox{{\color{red}\upperdot}}
\newsavebox\greenupperbox\savebox\greenupperbox{{\color{green}\upperdot}}
\newsavebox\bluevertbox\savebox\bluevertbox{{\color{blue}\diagramvert}}
\newsavebox\redvertbox\savebox\redvertbox{{\color{red}\diagramvert}}
\newsavebox\greenvertbox\savebox\greenvertbox{{\color{green}\diagramvert}}
\newcommand{\blacklower}{\mathop{\scalebox{0.6}{\usebox\blacklowerbox}}\limits}
\newcommand{\bluelower}{\usebox\bluelowerbox}
\newcommand{\redlower}{\usebox\redlowerbox}
\newcommand{\greenlower}{\usebox\greenlowerbox}
\newcommand{\blackupper}{\mathop{\scalebox{0.6}{\usebox\blackupperbox}}\limits}
\newcommand{\blueupper}{\usebox\blueupperbox}
\newcommand{\redupper}{\usebox\redupperbox}
\newcommand{\greenupper}{\usebox\greenupperbox}
\newcommand{\bluevert}{\usebox\bluevertbox}
\newcommand{\redvert}{\usebox\redvertbox}
\newcommand{\greenvert}{\usebox\greenvertbox}
\newcommand{\ffw}{{\check\varpi_1}}
\newcommand{\sky}{{\cE_\omega}}
\newcommand{\bY}{\mathbf{Y}}
\newcommand{\Waff}{W_{\mathrm{aff}}}
\newcommand{\Wext}{W_{\mathrm{ext}}}
\newcommand{\Phiv}{\check\Phi}
\newcommand{\alphav}{\check\alpha}
\newcommand{\ut}{\mathsf{t}}
\newsavebox\upperlowerdot
\savebox\upperlowerdot{%
\begin{tikzpicture}[scale=0.3,thick,baseline]
 \draw (0,-1) to (0,-0.4);
 \draw (0,0.4) to (0,1);
 \node at (0,-0.4) {$\bullet$};
 \node at (0,0.4) {$\bullet$};
\end{tikzpicture}%
}
\newsavebox\lowerupperdot
\savebox\lowerupperdot{%
\begin{tikzpicture}[scale=0.3,thick,baseline]
 \draw (0,-0.5) to (0,0.5);
 \node at (0,-0.5) {$\bullet$};
 \node at (0,0.5) {$\bullet$};
\end{tikzpicture}%
}
\newcommand{\ql}{q^\lhd}
\newcommand{\qr}{q^\rhd}
\newcommand{\bh}{\boldsymbol{h}}
\newcommand{\bZ}{\boldsymbol{\mathcal{Z}}}
\newcommand{\bEtot}{\mathbf{E}^\diamondsuit}
\newcommand{\bN}{\boldsymbol{N}}
\newcommand{\tT}{{\hat T}}
\newcommand{\coh}{\mathsf{H}}
\newcommand{\pt}{{\mathrm{pt}}}
\newtheorem{thm}{Theorem}[section]
\newtheorem{lem}[thm]{Lemma}
\newtheorem{cor}[thm]{Corollary}
\newtheorem{prop}[thm]{Proposition}
\theoremstyle{definition}
\newtheorem{defn}[thm]{Definition}
\theoremstyle{remark}
\newtheorem{rmk}[thm]{Remark}
\numberwithin{equation}{section}
\newcommand{\id}{\mathrm{id}}
\newcommand{\simto}{\overset{\sim}{\to}}
\newcommand{\la}{\langle}
\newcommand{\ra}{\rangle}
\newcommand{\sst}{\scriptscriptstyle}
\newcommand{\Hom}{\mathrm{Hom}}
\newcommand{\uHom}{\underline{\Hom}}
\newcommand{\uEnd}{\underline{\mathrm{End}}}
\newenvironment{bsm}{\left[\begin{smallmatrix}}{\end{smallmatrix}\right]}
\newcommand{\ssparity}{\S2.2}
\newcommand{\secthreederived}{\S3}
\newcommand{\exgmder}{Examples~3.1 and~6.5}
\newcommand{\rmkhomnilp}{Remarks~3.3 and~5.6}
\newcommand{\defnmonodromy}{Definition~3.5}
\newcommand{\eqnmonconcrete}{Eq.~(5.5)}
\newcommand{\propmonff}{Proposition~5.5}
\newcommand{\rmkmonnilp}{Remark~5.6}
\newcommand{\proprecollecommute}{Proposition~6.7}
\newcommand{\thmrfreemon}{Theorem~8.4}
\newcommand{\secjordan}{\S9}
\title[Nearby cycles for parity sheaves]{Nearby cycles for parity sheaves on a divisor with simple normal crossings}
\author{Pramod N. Achar}
\address{Department of Mathematics\\
   Louisiana State University\\
   Baton Rouge, LA 70803\\
   U.S.A.}
\email{pramod@math.lsu.edu}
\author{Laura Rider}
\address{Department of Mathematics\\
   Boyd Graduate Studies Research Center\\
   University of Georgia\\
   Athens, GA 30602\\
   U.S.A.}
\email{laurajoy@uga.edu}
\thanks{P.A.~was partially supported by NSF Grant Nos. DMS-1500890 and DMS-1802241. L.R. was supported by NSF Grant No. DMS-1802378.}
\begin{document}

\begin{abstract}
The first author recently introduced a ``nearby cycles formalism'' in the framework of chain complexes of parity sheaves.  In this paper, we compute this functor in two related settings: (i)~affine space, stratified by the action of a torus, and (ii)~the global Schubert variety associated to the first fundamental coweight of the group $\PGL_n$. The latter is a parity-sheaf analogue of Gaitsgory's central sheaf construction.
\end{abstract}

\maketitle

\section{Introduction}

In~\cite{a:hgps}, the first author introduced a ``nearby cycles formalism'' associated to an algebraic map $f: X \to \bA^1$ in the framework of (chain complexes) of parity sheaves.  This entails the construction of a functor
\[
\Psi_f: \Dmix_\Gm(X_\gen,\bk) \to \Dmix(X_0,\bk)
\]
with properties resembling those of the classical unipotent nearby cycles functor~\cite{bei:hgps, rei:nbhtg}, including a canonical nilpotent endomorphism $\Nilp_\Psi: \Psi_f(\cF) \to \Psi_f(\cF)\la 2\ra$, called the \emph{monodromy endomorphism}.  (See Section~\ref{sec:nearby-formalism} below for a review of the notation and setup.) It is expected that this functor will make it possible to adapt Gaitsgory's construction of ``central sheaves''~\cite{gai:cce} to the setting of the mixed modular derived category~\cite{ar:mpsfv2}, which has found numerous applications in modular geometric representation theory (see~\cite[\S7.1]{ar:dkf}).

In this paper, we compute the first nontrivial examples of the nearby cycles functor $\Psi_f$, in the following two related settings:
\begin{itemize}
\item $X = \Aff$, and $f: X \to \bA^1$ is the map $f(x_1,\ldots,x_n) = x_1\cdots x_n$.  In this case, the special fiber $X_0$ is the union of the coordinate hyperplanes in $\Aff$, and hence a divisor with simple normal crossings.
\item $X = \ocGr_\ffw$, the ``global Schubert variety'' associated to the first fundamental coweight $\ffw$ for the group $\PGL_n$, as defined in~\cite{zhu:ccpr}.  This space is equipped with a map $f: X \to \bA^1$ such that $f^{-1}(t)$ for any $t \ne 0$ is identified with the minuscule Schubert variety $\Gr_\ffw$ in the affine Grassmannian for $\PGL_n$, isomorphic to $\bP^{n-1}$.  The special fiber $X_0$ is a subset of the affine flag variety $\Fl$, known as the ``central degeneration of $\Gr_\ffw$.''
\end{itemize}
These two cases are closely related: there is an open affine subset of $\ocGr_\ffw$ that can be identified with $\Aff$ in a way that is compatible with the map to $\bA^1$.  This fact is used in a crucial way in this paper: we compute the nearby cycles complex on $\Aff$ directly from the definition, and then we use this open embedding to deduce the result on $\ocGr_\ffw$.

An explicit description of the nearby cycles object $\Psi_f(\ubk_{X_\gen}\{n\})$ is given in Sections~\ref{sec:nearby} and~\ref{sec:nearby2}.  From this description, one can see that $\Psi_f(\ubk_{X_\gen}\{n\})$ is, in fact, a perverse sheaf.  (Unlike in the classical case, the $t$-exactness of the mixed nearby cycles functor of~\cite{a:hgps} is not known in general.)  Moreover, the canonical nilpotent endomorphism $\Nilp_\Psi: \Psi_f(\ubk_{X_\gen}\{n\}) \to \Psi_f(\ubk_{X_\gen}\{n\})\la 2\ra$ gives rise to a filtration
\[
M_\bullet \Psi_f(\ubk_{X_\gen}\{n\}),
\]
called the \emph{monodromy filtration}.  (See Section~\ref{sec:monodromy} for details.)

The following result describes the associated graded of this filtration.  One can also read off multiplicities of composition factors from this statement.

\begin{thm}
Let $X$ denote either $\Aff$ or $\ocGr_\ffw$, as above.  The associated graded of the monodromy filtration on the mixed perverse sheaf  $\Psi_f(\ubk_{X_\gen}\{n\})$ is given by
\[
\gr_k^{M} \Psi_f(\ubk_{X_\gen}\{n\}) = \bigoplus_{\substack{p, q \ge 0\\ p-q = k}} \,\bigoplus_{\substack{I \subsetneq [n]\\ |I| = n-1-p-q}} \ubk_{\overline{X_I}}\{|I|\}\la k-1\ra.
\]
\end{thm}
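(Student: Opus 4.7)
The plan is to handle the case $X = \Aff$ first by direct computation, and then to deduce the case $X = \ocGr_\ffw$ by transporting the result along the open embedding $\Aff \hookrightarrow \ocGr_\ffw$ compatible with $f$ noted in the introduction.

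For $X = \Aff$, the starting point is the explicit description of $\Psi_f(\ubk_{X_\gen}\{n\})$ produced in Sections~\ref{sec:nearby} and~\ref{sec:nearby2}: it presents the object as a direct sum of (shifted) constant sheaves $\ubk_{\overline{X_I}}\{|I|\}$ on the closed coordinate subspaces indexed by $I \subsetneq [n]$, together with an explicit formula for $\Nilp_\Psi$ in terms of these summands. The main step, which I expect to be the principal obstacle, is to determine the Jordan type of $\Nilp_\Psi$ on this model. Because $f = x_1 \cdots x_n$ is a product of coordinate functions, I expect the summands indexed by a fixed $J = [n] \setminus I$ to assemble into a twist of the $|J|$-fold tensor power of the standard $2$-dimensional $\mathfrak{sl}_2$-representation, with $\Nilp_\Psi$ acting as the iterated principal nilpotent. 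Verifying this structure should proceed by an induction on $n$, using a Thom--Sebastiani-type step that splits off one factor $x_i$ at a time and tracks how its contribution to the monodromy interacts with the nearby cycles of the remaining factors.

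Once the Jordan form of $\Nilp_\Psi$ is established, the associated graded of the monodromy filtration is read off by standard $\mathfrak{sl}_2$-combinatorics. The Clebsch--Gordan decomposition of the $|J|$-fold tensor power of the standard representation produces primitive vectors in highest weights $d \in \{|J|-1, |J|-3, \ldots\}$, and each such primitive vector generates an irreducible summand which contributes one copy of $\ubk_{\overline{X_I}}\{|I|\}\la k-1\ra$ at every weight $k \in \{-d, -d+2, \ldots, d\}$. Reindexing via $p = (d+k)/2$ and $q = (d-k)/2$ turns this into the double sum in the statement, with the constraint $|I| = n-1-p-q$ encoding exactly that $p+q = |J|-1$.

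To transfer the result to $X = \ocGr_\ffw$, I would use that $\Psi_f(\ubk_{X_\gen}\{n\})$ is known to be perverse on $\ocGr_\ffw$ (by the remarks in the introduction), that each closure $\overline{X_I} \subset \Aff$ extends canonically to a closed subvariety of $\ocGr_\ffw$, and that the formation of $\Psi_f$ and of the monodromy filtration commutes with $\ast$-restriction to the open subset $\Aff$. The computation on $\Aff$ then forces the corresponding direct-sum decomposition on $\ocGr_\ffw$ on the open part, and a dimension-and-support check against the complement $\ocGr_\ffw \setminus \Aff$ (which sits in Schubert strata of strictly smaller dimension) confirms that no additional summands can appear, giving the stated formula.
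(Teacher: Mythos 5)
Your overall strategy (compute on $\Aff$, then transfer to $\ocGr_\ffw$ via the open chart) matches the paper's, but the central claim you make about the $\mathfrak{sl}_2$-structure of $\Psi_f$ on $\Aff$ is wrong, and the rest of the argument is built on it. In the explicit model $\Psi_f(\cE_\gen)\cong\bZ\la-1\ra$ of Theorem~\ref{thm:nearby}, the underlying graded parity sheaf is $\bEtot=\bigoplus_{i=0}^{n-1}\bEr_i$, and for a fixed $I\subsetneq[n]$ with $|I|=i$ (so $|J|=n-i$) the parity sheaf $\cE(I)$ occurs exactly $|J|$ times, in Tate twists $\la -n+i+1\ra,\la -n+i+3\ra,\ldots,\la n-i-1\ra$, with $\Nilp_\Psi=\bN$ acting on those $|J|$ copies as a \emph{single} Jordan block of size $|J|$. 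As an $\mathfrak{sl}_2$-module this is the irreducible of dimension $|J|$, i.e.\ the $(|J|-1)$-st symmetric power of the standard representation — not the $|J|$-fold tensor power, which has dimension $2^{|J|}$ and whose Clebsch--Gordan decomposition would produce many spurious composition factors. Your own bookkeeping betrays the problem: the top highest weight of the $m$-fold tensor power of the standard $2$-dimensional representation is $m$, not $m-1$, yet you list $d\in\{|J|-1,|J|-3,\ldots\}$ and then in the reindexing keep only $d=|J|-1$, which is exactly the single-Jordan-block answer. The inductive Thom--Sebastiani step you invoke to establish the structure also does not apply as stated, since $f=x_1\cdots x_n$ is a product, not a sum, of the coordinates; the paper's K\"unneth-type reduction to $n=1$ (in the proof of Proposition~\ref{prop:eq-std}) is carried out at the level of $\bj_!$ and $\bj_*$, not at the level of $\Psi_f$ itself.

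Two further points. For the transfer to $\ocGr_\ffw$, the complement of the chart $\Aff$ in $(\ocGr_\ffw)_0$ meets every stratum, so a ``dimension-and-support check against the complement'' does not suffice; the paper instead proves that $u_0^*$ is fully faithful on perverse sheaves (Proposition~\ref{prop:chart-ff}) and uses this to pin down both $\Psi_f$ and the monodromy endomorphism. And the paper's proof of the stated theorem, once the explicit pair $(\bEtot,\bN)$ is in hand, is a short direct linear-algebra computation: apply the formula $M_k=\sum_{p-q=k}(\ker\bN^{p+1})\cap(\im\bN^q\la-2q\ra)$ block-by-block on each $\bEr_i$, check by a degree argument that the differential of $\bZ$ preserves the resulting filtration of $\bEtot$ (so that it descends to a filtration by mixed perverse subsheaves), and then read off the associated graded by reindexing. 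Once you replace ``$|J|$-fold tensor power'' by ``single Jordan block of size $|J|$'', the final combinatorial step you sketch does yield the stated formula.
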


In particular, each component of this associated graded is pure, so the monodromy filtration coincides with (a shift of) the \emph{weight filtration} on $\Psi_f(\ubk_{X_\gen}\{n\})$ (in the sense of~\cite{ar:mpsfv3}).  For an analogous statement in the context of classical ($\ell$-adic) nearby cycles, see~\cite[\S3.4]{ill:srcnc}, as well as~\cite{rz:lzf, sai:wss}.  See also~\cite[Proposition~9.1]{gh:jhsnc}.

\subsection*{Contents}
Section~\ref{sec:nearby-formalism} gives a brief review of the nearby cycles formalism from~\cite{a:hgps}.  In Section~\ref{sec:affine-pre}, we fix notation for the case of $\Aff$, and in Sections~\ref{sec:direct-sum}--\ref{sec:nearby}, we carry out the nearby cycles calculation in this case. In Sections~\ref{sec:affflag} and~\ref{sec:schub-pre}, we study the geometry of $\ocGr_\ffw$.  The nearby cycles calculation in this case is done in Section~\ref{sec:nearby2}.  Section~\ref{sec:monodromy} is devoted to the study of the monodromy filtration (on either $\Aff$ or $\ocGr_\ffw$).  Finally, Section~\ref{sec:examples} contains a few explicit examples.

\subsection*{Acknowledgments}
The complex of parity sheaves $\bZ^{\ocGr_\ffw}$ described in Section~\ref{sec:nearby2} (see also Section~\ref{sec:examples}) has been discovered and studied independently by B.~Elias~\cite{eli:gcs} from a rather different perspective.  We are grateful to him for keeping us informed about his work.

\section{Background on the nearby cycles formalism}
\label{sec:nearby-formalism}

\subsection{Graded parity sheaves}
\label{ss:grparity}

Let $\bk$ be a complete local principal ideal domain.  Throughout the paper, we will consider sheaves with coefficients in $\bk$.  Let $R$ denote the $\Gm$-equivariant cohomology of a point:
\begin{equation}\label{eqn:R-defn}
R = \coh^\bullet_\Gm(\pt;\bk) = \bk[\xi].
\end{equation}
Here, $\xi \in \coh^2_\Gm(\pt;\bk)$ is the canonical generator, as in~\cite[\ssparity]{a:hgps}.  We regard this as a bigraded ring by setting $\deg \xi = (2,2)$.

Let $X$ be a complex algebraic variety, and let $H$ be an algebraic group acting on $X$.  Suppose that there is an action of $\Gm$ on $H$ by group automorphisms, so that we may form the group $\Gm \ltimes H$, and that the $H$-action on $X$ extends to an action of $\Gm \ltimes H$.  Assume that $X$ is equipped with a fixed algebraic stratification $(X_s)_{s \in \sS}$ satisfying the assumptions of~\cite[\ssparity]{a:hgps}.  In particular, for each stratum $X_s$, there is a unique (up to isomorphism and shift) indecomposable $\Gm \ltimes H$-equivariant parity sheaf supported on $\overline{X_s}$.

As in~\cite{a:hgps}, it is useful to distinguish the $\Gm$-equivariance from the $H$-equivariance.  For this reason, the additive category of $\Gm \ltimes H$-equivariant parity sheaves on $X$ is denoted by $\Parity_\Gm(X/H,\bk)$.  Following~\cite{ar:mpsfv2, a:hgps}, we denote the cohomological shift functor in $\Parity_\Gm(X/H,\bk)$ by $\{1\}$.  A \emph{graded parity sheaf} is a formal expression of the form
\[
\textstyle\cF = \bigoplus_{i \in \Z} \cF^i[-i],
\]
where $\cF^i \in \Parity_\Gm(X/H,\bk)$, and where only finitely many of the $\cF^i$ are nonzero.  Recall that for a graded parity sheaf, the \emph{Tate twist} $\la 1\ra$ is defined by
\[
\cF\la 1\ra = \cF\{-1\}[1].
\]
If $\cF$ and $\cG$ are graded parity sheaves, we define $\uHom(\cF,\cG)$ to be the bigraded $\bk$-module given by
\[
\uHom(\cF,\cG)^i_j = \bigoplus_{\substack{p,q \in \Z\\ q-p = i-j}} \Hom(\cF^p, \cG^q\{j\}).
\]
This is naturally a bigraded $R$-module.  A \emph{morphism} of graded parity sheaves $\phi: \cF \to \cG$ is just an element $\phi \in \uHom(\cF,\cG)^0_0$.  Note that a homogeneous element $\psi \in \uHom(\cF,\cG)^i_j$ of bidegree $(i,j)$ can be thought of as a morphism $\psi: \cF \to \cG[i]\la -j\ra$.

\subsection{Three derived categories}
\label{ss:3derived}

Let $\sr$ and $\bxi$ be indeterminates, and let
\[
R^\vee = \bk[\sr]
\qquad\text{and}\qquad
\Lambda = \bk[\bxi]/(\bxi^2).
\]
We regard these as bigraded rings by setting $\deg \sr = (0,-2)$ and $\deg \bxi = (1,2)$.  

The theory developed in~\cite{a:hgps} involves three triangulated categories, briefly summarized in the table below.  In all three, an object is a pair $(\cF,\delta)$, where $\cF$ is a graded a parity sheaf, and $\delta$ (called the ``differential'') is an element of bidegree $(1,0)$ in some bigraded $\bk$-module, satisfying some condition.  
\[
\begin{array}{c|c|c}
\text{\it Category} &
\text{\it Differentials live in\dots} &
\text{\it and satisfy\dots} \\
\hline
\Dmix_\Gm(X/H,\bk) & \uEnd(\cF) & \delta^2 = 0 \\
\Dmix(X/H,\bk) & \Lambda \otimes \uEnd(\cF) & \delta^2 + \kappa(\delta) = 0 \\
\Dmix_\mon(X/H,\bk) & R^\vee \otimes \uEnd(\cF) & \delta^2 = \sr\xi \cdot \id_\cF
\end{array}
\]
In the second row, $\kappa$ is a certain map that satisfies $\kappa(\bxi \cdot \id) = \xi \cdot \id$ and obeys the Leibniz rule.  See~\cite[\secthreederived]{a:hgps} for further details on all three of these categories.

We remark that $\Hom$-groups in $\Dmix_\mon(X/H,\bk)$ inherit an action of $R^\vee$.  In particular, every object $\cF \in \Dmix_\mon(X/H,\bk)$ carries a canonical endomorphism
\[
\sr \cdot \id_\cF: \cF \to \cF\la 2\ra,
\]
and all morphisms in $\Dmix_\mon(X/H,\bk)$ commute with $\sr$.  See~\cite[\defnmonodromy]{a:hgps}.

According to~\cite[\propmonff]{a:hgps}, there is a fully faithful functor
\[
\Mon: \Dmix(X/H,\bk) \to \Dmix_\mon(X/H,\bk).
\]
An explicit formula for this functor can be found in~\cite[\eqnmonconcrete]{a:hgps}.

The categories $\Dmix_\Gm(X/H,\bk)$ and $\Dmix(X/H,\bk)$ admit a \emph{perverse $t$-structure}.  Their hearts are denoted by $\Perv^\mix_\Gm(X/H,\bk)$ and $\Perv^\mix(X/H,\bk)$, respectively. 

\subsection{The nearby cycles functor}
\label{ss:nearby-defn}

Now let $f: X \to \bA^1$ be a $\Gm$-equivariant map, where $\Gm$ acts on $\bA^1$ by the natural scaling action.  Let $X_0 = f^{-1}(0)$, and let $X_\gen = f^{-1}(\bA^1 \smallsetminus \{0\})$.  Assume that each stratum of $X$ is contained in either $X_0$ or $X_\gen$, and that
\begin{equation}\label{eqn:R-free}
\text{$\coh^\bullet_{\Gm \ltimes H}(X_s,\bk)$ is free as an $R$-module} \qquad
\text{for all $X_s \subset X_0$.}
\end{equation}
We let
\[
\bi: X_0 \hookrightarrow X
\qquad\text{and}\qquad
\bj: X_\gen \hookrightarrow X
\]
be the inclusion maps.  By~\cite[\thmrfreemon]{a:hgps}, the condition~\eqref{eqn:R-free} implies that
\begin{equation}\label{eqn:mon-equiv}
\Mon: \Dmix(X_0/H,\bk) \simto \Dmix_\mon(X_0/H,\bk)
\end{equation}
is an equivalence of categories.  (In contrast, on $X_\gen$, $\Mon$ is never an equivalence.)  The main content of~\cite{a:hgps} is the construction of a functor
\[
\Psi_f: \Dmix_\Gm(X_\gen/H,\bk) \to \Dmix(X_0/H,\bk),
\]
together with a natural nilpotent endomorphism $\Nilp: \Psi_f(\cF) \to \Psi_f(\cF)\la 2\ra$.  Explicitly, the functor is given by the formula
\[
\Psi_f(\cF) = \Mon^{-1}\bi^*\bj_*\cJ(\cF)\la -2\ra.
\]
For a discussion of pullback and push-forward functors in this setting, see~\cite[\S6]{a:hgps} (and also~\cite[\S2]{ar:mpsfv2}).
The notation $\cJ: \Dmix_\Gm(X_\gen/H,\bk) \to \Dmix_\mon(X_\gen/H,\bk)$ is used for for the ``pro-unipotent Jordan block functor'' as defined in~\cite[\secjordan]{a:hgps}.

\section{Parity sheaves on affine space}
\label{sec:affine-pre}

We will use the notation $[n] = \{1,\ldots,n\}$. For $I \subset [n]$, let
\[
\Affs_I = \{ (x_1, \ldots, x_n) \in \Aff \mid \text{$x_i = 0$ if and only if $i \notin I$} \}.
\]
The collection of subvarieties $\{ \Affs_I \}_{I \subset [n]}$ constitutes a stratification of $\Aff$.  Note that $\Affs_{[n]}$ is an open dense subset of $\Aff$, and $\Affs_\varnothing$ is just the origin.

Let $T = \Gme{n-1}$, and let $\tT = T \times \Gm$. Throughout, the ``last'' copy of $\Gm$ in $\tT$ will play a different conceptual role from the first $n-1$ copies, and the notation will reflect that.  Let
\[
\alpha_1, \ldots, \alpha_{n-1}, \xi: \tT \to \Gm
\]
be the characters given by
\[
\alpha_i(t_1, \ldots, t_{n-1},z) = t_i,
\qquad
\xi(t_1, \ldots, t_{n-1},z) = z.
\]
Define a character $\alpha_n: \tT \to \Gm$ by
\begin{equation}\label{eqn:aff-an-defn}
\alpha_n = \xi -\alpha_1 - \cdots -\alpha_{n-1}.
\end{equation}
Let $\tT$ act on $\Aff$ with weights $\alpha_1, \ldots, \alpha_{n-1}, \alpha_n$.  In other words,
\[
(t_1, \ldots, t_{n-1}, z) \cdot (x_1, \dots, x_n) = (t_1x_1, t_2x_2, \ldots, t_{n-1}x_{n-1}, t_1^{-1}t_2^{-1} \cdots t_{n-1}^{-1}zx_n).
\]
The set $\{\alpha_1, \ldots, \alpha_{n-1}, \xi\}$ is a $\Z$-basis for the character lattice $X_*(\tT)$.  We have
\[
\coh^\bullet_\tT(\pt;\bk) = \bk[\alpha_1, \ldots, \alpha_{n-1}, \xi],
\]
where the generators $\alpha_1, \ldots, \alpha_{n-1}, \xi$ all have degree $2$. This ring is an algebra over the ring $R = \bk[\xi]$ from~\eqref{eqn:R-defn}. Of course, $\{\alpha_1, \ldots, \alpha_{n-1}, \alpha_n\}$ is another basis for $X_*(\tT)$, and another set of generators for $\coh^\bullet_\tT(\pt;\bk)$.  It is sometimes convenient to use this basis instead.

Let $f: \Aff \to \bA_1$ be the map $f(x_1, \ldots, x_n) = x_1x_2 \cdots x_n$.  Let $\tT$ act on $\bA^1$ via the character $\xi$.  Then $f$ is $\tT$-equivariant.  We have
\[
\Affs_0 = f^{-1}(0) = \bigcup_{I \subsetneq [n]} \Affs_I
\qquad\text{and}\qquad
\Affs_\gen = \Affs_{[n]}.
\]
The following lemma says that condition~\eqref{eqn:R-free} holds.

\begin{lem}
For each subset $I \subsetneq [n]$, $\coh^\bullet_\tT(\Affs_I;\bk)$ is free as an $R$-module.
\end{lem}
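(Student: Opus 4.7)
The plan is to realize $\Affs_I$ as a $\tT$-homogeneous space $\tT/K$ and then apply the standard identification $\coh^\bullet_\tT(\tT/K;\bk) \cong \coh^\bullet_K(\pt;\bk)$. More precisely, $\Affs_I$ is isomorphic to $(\Gm)^I$, on which $\tT$ acts through the homomorphism $\pi : \tT \to (\Gm)^I$ given by $t \mapsto (\alpha_i(t))_{i \in I}$. Since $(\Gm)^I$ acts freely and transitively on itself by translation, the induced $\tT$-action on $\Affs_I$ is transitive with stabilizer $K := \ker \pi$; thus $\Affs_I \cong \tT/K$ as $\tT$-varieties, and $\coh^\bullet_\tT(\Affs_I;\bk) \cong \coh^\bullet_K(\pt;\bk)$.

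The key claim I would then establish is that the set $\{\alpha_i : i \in I\} \cup \{\xi\}$ extends to a $\Z$-basis of the character lattice $X_*(\tT)$. Granting this, $K$ is automatically a torus of dimension $n - |I|$ (its character lattice is the corresponding free quotient), and $\xi$ restricts to a primitive element of $X_*(K)$ that can be completed to a $\Z$-basis. Hence $\coh^\bullet_K(\pt;\bk)$ is a polynomial ring over $\bk$ in which $\xi$ appears as one of the generators, and so is manifestly free as a module over $R = \bk[\xi]$.

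The claim itself is checked by a small case analysis on whether $n \in I$. If $n \notin I$, then $\{\alpha_i : i \in I\} \cup \{\xi\}$ is already a subset of the standard $\Z$-basis $\{\alpha_1,\ldots,\alpha_{n-1},\xi\}$ of $X_*(\tT)$. If $n \in I$, then $I \subsetneq [n]$ lets me choose some $j \in [n-1] \setminus I$, and using the relation $\alpha_n = \xi - \alpha_1 - \cdots - \alpha_{n-1}$ from~\eqref{eqn:aff-an-defn}, the set obtained by replacing $\alpha_j$ with $\alpha_n$ in the standard basis is again a $\Z$-basis and contains $\{\alpha_i : i \in I\} \cup \{\xi\}$. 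This finite bit of character-lattice bookkeeping is the only real point of the argument; everything else is formal.
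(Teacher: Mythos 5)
Your proposal is correct and follows essentially the same route as the paper: identify $\Affs_I$ as the homogeneous space $\tT/T_I$ with $T_I = \bigcap_{i\in I}\ker\alpha_i$, reduce to $\coh^\bullet_{T_I}(\pt;\bk)$, and observe that the image of $\xi$ remains a polynomial generator because $I \subsetneq [n]$. The paper compresses the character-lattice bookkeeping into the phrase ``elementary considerations,'' whereas you spell out that $\{\alpha_i : i\in I\}\cup\{\xi\}$ extends to a $\Z$-basis of $X_*(\tT)$ (with the two cases $n\notin I$ and $n\in I$), which is exactly the content being invoked.
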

\begin{proof}
Let $T_I = \cap_{i \in I} \ker \alpha_i$.  Elementary considerations show that 
\[
\coh^\bullet_\tT(\Affs_I;\bk) \cong \coh^\bullet_{T_I}(\pt;\bk) \cong \bk[\alpha_1,\ldots,\alpha_n]/(\{ \alpha_i \mid i \in I \}).
\]
Since $I \ne [n]$ by assumption, this ring is free over $R$.
\end{proof}


We now introduce notation for parity sheaves on $\Aff$.  Of course, for each $I \subset [n]$, the closure $\overline{\Affs_I}$ is an affine space of dimension $|I|$.  In particular, $\overline{\Affs_I}$ is smooth, so the constant sheaf is a parity sheaf.  We introduce the notation
\[
\cE(I) = \ubk_{\overline{\Affs_I}}\{|I|\}.
\]
This is a ($\tT$-equivariant) perverse parity sheaf. If $i \in I$, there is a canonical morphism
\[
\aep_i: \cE(I) \to \cE(I \smallsetminus \{i\})\{1\}
\]
induced by $*$-restriction and adjunction, and another canonical morphism
\[
\aet_i: \cE(I \smallsetminus \{i\})\{-1\} \to \cE(I)
\]
induced by $!$-restriction and adjunction.  (We may occasionally write $\cE^{\Aff}(I)$, $\aep_i^{\Aff}$, or $\aet_i^{\Aff}$ to avoid confusion with the notation to be introduced in Section~\ref{sec:schub-pre}.)

\begin{lem}\label{lem:aff-epet}
Let $I \subset [n]$.
\begin{enumerate}
\item If $i \notin I$, then $\aep_i\aet_i = \alpha_i\cdot\id: \cE(I) \to \cE(I)\{2\}$.\label{it:euler}
\item If $i \in I$, then $\aet_i\aep_i = \alpha_i\cdot\id: \cE(I) \to \cE(I)\{2\}$.\label{it:oppeuler}
\end{enumerate}
\end{lem}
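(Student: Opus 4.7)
Both claims are instances of the standard self-intersection / Euler-class formula for the constant sheaf on a smooth ambient space, and the plan is to identify $\aep_i$ and $\aet_i$ as adjunction morphisms and then invoke that formula.

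For part~\eqref{it:euler}, suppose $i \notin I$ and consider the codimension-one closed immersion $\iota: \overline{\Affs_I} \hookrightarrow \overline{\Affs_{I \cup \{i\}}}$ cut out by $x_i$. Setting $\cF = \cE(I \cup \{i\})$ and unfolding the definitions, $\aet_i$ is the counit $\iota_*\iota^!\cF \to \cF$ (after the canonical identification $\iota^!\cF \cong \cE(I)\{-1\}$), while $\aep_i$ is the unit $\cF \to \iota_*\iota^*\cF$ (after $\iota^*\cF \cong \cE(I)\{1\}$). Thus $\aep_i \circ \aet_i$ is the composite $\iota_*\iota^!\cF \to \cF \to \iota_*\iota^*\cF$. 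Applying $\iota^*$ and using $\iota^*\iota_* \cong \id$ together with the triangle identities reduces this to the canonical morphism $\iota^!\cF \to \iota^*\cF$, which for the constant sheaf on a smooth variety is multiplication by the $\tT$-equivariant Euler class of the normal bundle $N_{\overline{\Affs_I}/\overline{\Affs_{I \cup \{i\}}}}$. Since this normal bundle is the one-dimensional $\tT$-module of weight $\alpha_i$, the Euler class equals $\alpha_i$ and the formula follows.

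Part~\eqref{it:oppeuler} is essentially the same computation in the opposite order. Now $i \in I$, $\iota: \overline{\Affs_{I \setminus \{i\}}} \hookrightarrow \overline{\Affs_I}$, and $\cF = \cE(I)$; both $\aep_i$ and $\aet_i$ are adjunction morphisms for the same $\cF$. The composition $\aet_i \circ \aep_i$ factors as $\cF \to \iota_*\iota^*\cF \simeq \iota_*\iota^!\cF\{2\} \to \cF\{2\}$, where the middle arrow is the canonical identification available in codimension one (both sides equal $\cE(I \setminus \{i\})\{1\}$). A parallel analysis identifies this composite with multiplication by the first $\tT$-equivariant Chern class of the line bundle $\mathcal{O}(\overline{\Affs_{I \setminus \{i\}}})$ on $\overline{\Affs_I}$, which is again $\alpha_i$ since the defining section is the weight-$\alpha_i$ coordinate $x_i$. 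The principal book-keeping concern is ensuring no stray sign or scalar appears in front of $\alpha_i$; this is easily verified by restricting both sides to the $\tT$-fixed origin, where each relevant Hom space becomes a rank-one free module over $\coh^\bullet_\tT(\pt;\bk) = \bk[\alpha_1, \ldots, \alpha_{n-1}, \xi]$ in the appropriate bidegree.
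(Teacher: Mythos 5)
Your proof is correct, and for part~\eqref{it:euler} it is essentially the same as the paper's (both identify $\aep_i\aet_i$ with the $\tT$-equivariant Euler class of the normal bundle), but you diverge from the paper in two ways. First, the paper reduces immediately to the case $n=1$ via external tensor products and only then invokes the Euler-class identification; you instead run the geometric argument directly for general $n$, which is cleaner but equivalent. Second, and more substantively, for part~\eqref{it:oppeuler} the paper does \emph{not} repeat the geometric argument: it reduces part~\eqref{it:oppeuler} to part~\eqref{it:euler} by a purely algebraic trick. Namely, it considers the $\coh^\bullet_T(\pt,\bk)$-module map $\phi = \aep_1 \circ ({-}) \circ \aet_1 : \coh^\bullet_T(\bA^1,\bk) \to \coh^\bullet_T(\pt,\bk)$, computes $\phi(\id) = \alpha_1$ from part~\eqref{it:euler}, notes that $\phi$ is injective because the target is a domain and the source is free of rank one, and concludes $\aet_1\aep_1 = \alpha_1 \cdot \id$. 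You instead identify $\aet_i\aep_i$ directly as the composite $\mathrm{unit} \circ \mathrm{counit}$, i.e.\ the Gysin-style self-intersection map, and appeal to the fact that this is cup product with the equivariant fundamental class $c_1^{\tT}(\mathcal{O}(\overline{\Affs_{I\setminus\{i\}}})) = \alpha_i$. Both routes are valid; the paper's reduction has the virtue of invoking only the Euler-class fact once and handling the scalar-normalization concern automatically, whereas your argument is more geometric and self-contained (the sign/scalar check by restriction to the attractive fixed point is a reasonable way to pin it down) but requires invoking a second standard computation (the divisor-class formula) rather than reusing the first.
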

\begin{proof}
We first consider the special case where $n = 1$. For part~\eqref{it:euler}, the statement is nonempty only when $I = \varnothing$.  In this case, $\cE(I)$ is the skyscraper sheaf on the point $\{0\} \subset \bA^1$, and the map $\aep_1 \aet_1: \cE(I) \to \cE(I)\{2\}$ can be regarded as an element of $\coh^2_T(\pt,\bk) \cong \bk[\alpha_1]$.  It is well known that this element can be identified with the $T$-equivariant Euler class of the vector bundle $\bA^1 \to \pt$, and, moreover, that this equivariant Euler class is precisely the character of the $T$-action on $\bA^1$: see, for instance,~\cite[\S 3]{ab:mmec}.  That is, $\aep_1\aet_1 = \alpha_1\cdot\id$.

For part~\eqref{it:oppeuler}, consider the map
\begin{align*}
\phi = \aep_1 \circ ({-}) \circ \aet_1
&: \Hom(\ubk_{\bA^1}, \ubk_{\bA^1}\{k\}) \to \Hom(\ubk_\pt, \ubk_\pt\{k+2\}) \\
\text{or}\qquad &
\coh^k_T(\bA^1,\bk) \to \coh^{k+2}_T(\pt,\bk).
\end{align*}
Recall that $\coh^\bullet_T(\bA^1,\bk)$ is a free $\coh^\bullet_T(\pt,\bk)$-module of rank~$1$, and that the map $\phi$ is a homomorphism of $\coh^\bullet_T(\pt,\bk)$-modules.  By part~\eqref{it:euler}, $\phi(\id) = \alpha_1$, and $\phi(\aet_i\aep_i) = \alpha_1^2$.  Since $\coh^\bullet_T(\pt,\bk)$ is a domain, $\phi$ is injective, and we deduce that $\aet_i\aep_i = \alpha_1\cdot \id$.

The lemma for general $n$ follows from the $n = 1$ case by taking suitable external tensor products.
\end{proof}

\begin{prop}\label{prop:aff-unit-sum}
For any $I \subset [n]$, we have
\[
\sum_{i \notin I} \aep_i \circ \aet_i + \sum_{i \in I} \aet_i \circ \aep_i = \xi \cdot \id_{\cE(I)}.
\]
\end{prop}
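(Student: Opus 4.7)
The plan is that this proposition is essentially a direct consequence of Lemma~\ref{lem:aff-epet}, combined with the very definition of $\alpha_n$ in \eqref{eqn:aff-an-defn}. The key observation is that the two sums on the left partition the index set $[n]$ according to whether $i \in I$ or $i \notin I$, and that in \emph{both} cases Lemma~\ref{lem:aff-epet} identifies the corresponding composition with $\alpha_i \cdot \id_{\cE(I)}$ as a morphism $\cE(I) \to \cE(I)\{2\}$.

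Concretely, I would proceed as follows. First, for each $i \notin I$, apply Lemma~\ref{lem:aff-epet}\eqref{it:euler} to replace $\aep_i \circ \aet_i$ by $\alpha_i \cdot \id_{\cE(I)}$. Next, for each $i \in I$, apply Lemma~\ref{lem:aff-epet}\eqref{it:oppeuler} to replace $\aet_i \circ \aep_i$ by $\alpha_i \cdot \id_{\cE(I)}$. Summing over all $i \in [n]$, the left-hand side becomes
\[
\Bigl(\sum_{i=1}^{n} \alpha_i\Bigr) \cdot \id_{\cE(I)}.
\]
The proof then closes by invoking the definition $\alpha_n = \xi - \alpha_1 - \cdots - \alpha_{n-1}$ from \eqref{eqn:aff-an-defn}, which forces $\alpha_1 + \cdots + \alpha_n = \xi$, yielding $\xi \cdot \id_{\cE(I)}$ as required.

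There is essentially no obstacle here, since all the geometric/equivariant content has already been absorbed into Lemma~\ref{lem:aff-epet}. The only conceptual point worth emphasizing in the write-up is that the morphisms $\alpha_i \cdot \id_{\cE(I)}$ and $\xi \cdot \id_{\cE(I)}$ are being interpreted via the action of $\coh^\bullet_{\tT}(\pt;\bk) = \bk[\alpha_1,\ldots,\alpha_{n-1},\xi]$ on the endomorphism ring of the $\tT$-equivariant parity sheaf $\cE(I)$, so that the identity \eqref{eqn:aff-an-defn} among characters of $\tT$ translates directly into an identity among endomorphisms. Thus, apart from a sentence or two fixing this interpretation, the proof is a one-line combination of Lemma~\ref{lem:aff-epet} and \eqref{eqn:aff-an-defn}.
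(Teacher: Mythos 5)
Your proposal is correct and matches the paper's proof exactly: the paper also deduces the proposition immediately from Lemma~\ref{lem:aff-epet} together with the definition~\eqref{eqn:aff-an-defn} of $\alpha_n$, which is precisely the two-step substitution and summation you describe.
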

\begin{proof}
This follows immediately from Lemma~\ref{lem:aff-epet} and~\eqref{eqn:aff-an-defn}.
\end{proof}

\section{Direct sums of parity sheaves}
\label{sec:direct-sum}

This section contains a number of technical lemmas about maps between various direct sums of Tate twists of the parity sheaves $\cE(I)$.  Most of the calculations in this section involve morphisms of parity sheaves or graded parity sheaves, as discussed in Section~\ref{ss:grparity}.

Later in this section, we will encounter some formulas involving the indeterminate $\sr$.  It will be convenient to treat these on the same footing as ordinary morphisms of graded parity sheaves.  We adopt the convention that a homogeneous element
\begin{equation}\label{eqn:r-mor}
\phi \in (R^\vee \otimes \uHom(\cF,\cG))^0_0
\end{equation}
of bidegree $(0,0)$ may simply be called a ``morphism'' $\phi: \cF \to \cG$.

\subsection{First round of direct sums}
\label{ss:dsum1}

For $k \in \{0, 1, \ldots, n\}$, let
\[
\bEp_k = \bigoplus_{\substack{I \subset [n] \\ |I| =k}} \cE(I).
\]
For $1 \le k \le n$, define $\bep: \bEp_k \to \bEp_{k-1}\{1\}$ by
\[
\bep = \sum_{\substack{I \subset [n],\ |I| = k\\ i \in I}}
 (-1)^{|\{j\mid \text{$1 \le j < i$ and $j \notin I$}\}|} (\aep_i: \cE(I) \to \cE(I \smallsetminus \{i\})\{1\}).
\]
Similarly, define $\bet: \bEp_{k-1}\{-1\} \to \bEp_k$ by
\[
\bet = \sum_{\substack{I \subset [n],\ |I| = k\\ i \in I}}
 (-1)^{|\{j\mid \text{$1 \le j < i$ and $j \notin I$}\}|} (\aet_i: \cE(I \smallsetminus \{i\})\{-1\} \to \cE(I)).
\]
It is sometimes convenient to (implicitly) allow the notation $\bEp_{-1} = \bEp_{n+1} = 0$.  We also understand $\bep: \bEp_0 \to \bEp_{-1}$ and $\bEp_{n+1} \to \bEp_n$ to be the zero maps, and likewise for $\bet$.  These conventions make it possible to state the following lemma without worrying about special cases.

\begin{lem}
\phantomsection\label{lem:epsilon0}
\begin{enumerate}
\item We have $\bep \circ \bep = 0$ and $\bet \circ \bet = 0$.
\item We have $\bep\bet + \bet\bep = \xi \cdot \id$.
\end{enumerate}
\end{lem}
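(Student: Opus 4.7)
The plan is to reduce both assertions to matrix-entry calculations with respect to the direct-sum decompositions $\bEp_k = \bigoplus_{|I|=k}\cE(I)$. Two structural facts will be used throughout. First, the maps $\aep_i$ and $\aet_i$ all arise from unit/counit adjunctions for the smooth closed inclusions among the coordinate affine subspaces $\overline{\Affs_J}$, so operations at distinct indices commute: whenever compositions are defined and $i_1 \ne i_2$, one has $\aep_{i_1}\aep_{i_2} = \aep_{i_2}\aep_{i_1}$, $\aet_{i_1}\aet_{i_2} = \aet_{i_2}\aet_{i_1}$, and $\aep_{i_1}\aet_{i_2} = \aet_{i_2}\aep_{i_1}$ (the last via base change around a Cartesian square of coordinate subspaces). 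Second, the signs in the definitions of $\bep$ and $\bet$ are Koszul signs, arranged so that interchanging two independent operations introduces a factor of $-1$.

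For $\bep\circ\bep = 0$ in part~(1), I would examine the component of the composite from $\cE(I)$ to $\cE(K)\{2\}$, which is nonzero only when $|K|=|I|-2$ and $K \subset I$. Writing $I \smallsetminus K = \{i,j\}$ with $i < j$, there are exactly two contributing paths, through $\cE(I\smallsetminus\{i\})$ and through $\cE(I\smallsetminus\{j\})$. The underlying composites agree by the first fact above, and a direct count will show that the products of the two Koszul sign factors along the two paths differ by $-1$, so they cancel. The argument for $\bet\circ\bet = 0$ is formally identical.

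For part~(2), I would compute each matrix entry of $\bep\bet + \bet\bep$ from $\cE(I)$ to $\cE(I')\{2\}$, split into diagonal and off-diagonal cases. If $I' = I$, then for each $i \in I$ the unique $\bet\bep$-contribution passes through $\cE(I\smallsetminus\{i\})$: its two Koszul sign factors are equal (so their product is $+1$), and its underlying composite $\aet_i\aep_i$ equals $\alpha_i\cdot\id$ by Lemma~\ref{lem:aff-epet}(2). Similarly, for each $i \notin I$ the unique $\bep\bet$-contribution yields $\alpha_i\cdot\id$ via Lemma~\ref{lem:aff-epet}(1). Summing and using~\eqref{eqn:aff-an-defn} should give $\bigl(\sum_{i=1}^n \alpha_i\bigr)\cdot\id_{\cE(I)} = \xi\cdot\id_{\cE(I)}$.

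If $I' \ne I$, the entry is nonzero only when $|I \cap I'| = k-1$; writing $I' = (I\smallsetminus\{i\})\cup\{i'\}$ with $i\in I$ and $i'\notin I$, the $\bet\bep$-contribution passes through $\cE(I\smallsetminus\{i\})$ while the $\bep\bet$-contribution passes through $\cE(I\cup\{i'\})$. The underlying composites again agree by base change, and a short case analysis (splitting according to whether $i<i'$ or $i>i'$) should show that the Koszul sign factors along the two paths always differ by $-1$, producing cancellation. I expect the sign bookkeeping in this off-diagonal case to be the main technical step; everything else is routine.
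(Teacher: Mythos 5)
Your proposal is correct and follows essentially the same route as the paper: part~(1) and the off-diagonal cancellation in part~(2) are exactly the ``routine sign bookkeeping'' the paper dismisses as following easily from the formulas, and the diagonal entries in part~(2) are handled by combining Lemma~\ref{lem:aff-epet} with~\eqref{eqn:aff-an-defn}, which is precisely the content of Proposition~\ref{prop:aff-unit-sum} that the paper cites directly. The commutation of $\aep_{i}$ and $\aet_{i'}$ at distinct indices is best justified by the external-tensor-product structure of $\Aff = \prod \bA^1$ (the same argument used in the proof of Lemma~\ref{lem:aff-epet}), which is what your ``base change'' remark amounts to.
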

\begin{proof}
The first assertion follows easily from the formulas.  For the second, using Proposition~\ref{prop:aff-unit-sum}, we have
\[
\bep\bet + \bet\bep =
\sum_{|I| = k} \Big(
\sum_{i \in I}  \aet_i \aep_i + \sum_{i \notin I} \aep_i \aet_i\Big)
= \sum_{|I| = k} \xi \cdot \id_{\cE(I)} = \xi \cdot \id_{\bEp_k}.\qedhere
\]
\end{proof}

Next, for $0 \le i \le n-1$, define an object $\bEr_i$ by
\[
\bEr_i = \bEp_i\la -n+i+1\ra \oplus \bEp_i\la -n+i+3\ra \oplus \cdots \oplus \bEp_i\la n-i-3\ra \oplus \bEp_i\la n-i-1\ra.
\]
This object has $n-i$ summands.  (We may sometimes consider the object $\bEr_n = 0$ as well.)  Define a map $N: \bEr_i \to \bEr_i\la 2\ra$ or
\[
N: \bEp_i\la -n+i+1\ra \oplus \cdots \oplus \bEp_i\la n-i-1\ra 
\to  \bEp_i\la -n+i+3\ra \oplus \cdots \oplus \bEp_i\la n-i+1\ra
\]
by
\[
N =
\begin{bsm}
0 & \id \\
& 0 & \id \\
& & \ddots \\
& & & 0 & \id \\
& & & & 0
\end{bsm}.
\]
Also, let $\bepr: \bEr_i \to \bEr_{i-1}[1]$ and $\betr: \bEr_{i-1}[-1] \to \bEr_i$ be the maps given by
\[
\bepr = 
\begin{bsm}
\bep \\
& \bep  \\
& & \ddots \\
& & & \bep \\
0 & 0 & \cdots & 0
\end{bsm}
\qquad\text{and}\qquad
\betr =
\begin{bsm}
0 & \bet \\
0 & & \bet \\
\vdots & & & \ddots \\
0 & & & & \bet 
\end{bsm}
\]

\begin{lem}
\phantomsection\label{lem:epsilon1}
\begin{enumerate}
\item We have $\bepr \circ \bepr = 0$ and $\betr \circ \betr = 0$.
\item We have $\bepr \circ N = N \circ \bepr$ and $\betr \circ N = N \circ \betr$.
\item We have $\bepr\betr + \betr\bepr = \xi \cdot N$.\label{it:epsilon-xi1}
\end{enumerate}
\end{lem}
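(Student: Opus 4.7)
My plan is to prove all three parts by direct matrix computation, leveraging the block structure of $\bepr$, $\betr$, and $N$ together with the identities for $\bep$ and $\bet$ from Lemma~\ref{lem:epsilon0}.

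For part~(1), the nonzero entries of $\bepr$ are all copies of $\bep$, so each entry of $\bepr\circ\bepr$ is a product $\bep\circ\bep$ at matching indices, which vanishes by Lemma~\ref{lem:epsilon0}(1); the same argument with $\bet$ in place of $\bep$ handles $\betr\circ\betr$. For part~(2), multiplying the matrix of $N$ (which has $\id$ on the first superdiagonal) against the matrix of $\bepr$ (which has $\bep$ on the main diagonal) produces, in either order, the matrix whose $(j,j{+}1)$ entry equals $\bep$ for $j = 1, \dots, n-i-1$ and zero elsewhere. An analogous calculation establishes $\betr\circ N = N\circ\betr$.

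Part~(3) is the substantive claim. I interpret both $\bepr\betr$ and $\betr\bepr$ as endomorphisms of $\bEr_i$ of cohomological degree~$+2$: ``$\bepr\betr$'' refers to the composition $\bEr_i \xrightarrow{\betr} \bEr_{i+1}[1] \xrightarrow{\bepr} \bEr_i[2]$, and ``$\betr\bepr$'' to $\bEr_i \xrightarrow{\bepr} \bEr_{i-1}[1] \xrightarrow{\betr} \bEr_i[2]$, each with the appropriate reshift of $\bepr$ or $\betr$. A direct matrix calculation then shows that both compositions are $(n-i)\times(n-i)$ matrices supported on the first superdiagonal, with entries $\bep\bet$ and $\bet\bep$ respectively, in rows $1,\dots,n-i-1$. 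Summing these and invoking Lemma~\ref{lem:epsilon0}(2), which gives $\bep\bet + \bet\bep = \xi\cdot\id$, produces $\xi\cdot\id$ at exactly the positions where $N$ has $\id$, i.e.\ $\xi\cdot N$.

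The main obstacle is the book-keeping rather than any conceptual difficulty: several distinct $\bepr$ and $\betr$ maps (one pair for each value of the source index) contribute to the identity in~(3), their matrices have different sizes, and the cohomological and Tate shifts $[\pm 1]$ and $\la 2\ra$ must all be tracked with care to verify that the diagonals align and that the sum in~(3) really makes sense as a single endomorphism of $\bEr_i$. Once the indexing is pinned down, the verification is routine.
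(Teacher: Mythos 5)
Your proof is correct and takes the same route the paper intends: the paper's own proof of this lemma is the one-line remark ``This follows easily from Lemma~\ref{lem:epsilon0}.'', and the matrix bookkeeping you spell out (including the correct interpretation of $\bepr\betr$ as passing through $\bEr_{i+1}$ and $\betr\bepr$ as passing through $\bEr_{i-1}$, with the superdiagonals aligning) is exactly the detail being suppressed.
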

\begin{proof}
This follows easily from Lemma~\ref{lem:epsilon0}.
\end{proof}

\begin{rmk}\label{rmk:unit-ok1}
Note that when applied to $\bEr_{n-1}$, Lemma~\ref{lem:epsilon1}\eqref{it:epsilon-xi1} reduces to the equation
\[
\begin{bsm} 0 \end{bsm}
\begin{bsm} 0 \end{bsm}
+
\begin{bsm} 0 & \bet \end{bsm}
\begin{bsm} \bep \\ 0 \end{bsm}
= \xi \cdot \begin{bsm} 0 \end{bsm}.
\]
Thus, Lemma~\ref{lem:epsilon1}\eqref{it:epsilon-xi1} has nontrivial content only for $\bEr_i$ with $i \le n-2$.  One may check this statement relies on Proposition~\ref{prop:aff-unit-sum} only for $|I| \le n-2$.  For the significance of this observation, see Section~\ref{ss:nearby2}.  
\end{rmk}

Define maps $\ilt: \bEp_i\la -n+i\ra \to \bEr_i\la -1\ra$, $\irt: \bEp_i\la n-i\ra \to \bEr_i\la 1\ra$, and $\eprt: \bEp_i\la n-i\ra \to \bEr_{i-1}\la -1\ra [1]$ by
\[
\ilt = 
\begin{bsm}
\id \\ \sr \\ \vdots \\ \sr^{n-i-1}
\end{bsm},
\qquad
\irt = 
\begin{bsm}
0 \\
\vdots \\
0 \\
\id
\end{bsm},
\qquad
\eprt = 
\begin{bsm}
0 \\
\vdots \\
0 \\
\bep
\end{bsm}.
\]
(See~\eqref{eqn:r-mor} for the interpretation of maps involving the indeterminate $\sr$.)  We also let
\[
\rho = \sr^{n-i}: \bEp_i\la -n+i\ra \to \bEp_i\la n-i\ra.
\]

\begin{lem}\label{lem:bunch1}
We have:
\begin{align*}
\eprt\bep = \bepr \eprt &= 0 &
\irt\bep &= \eprt &
\irt\bet &= \betr \irt &
\irt\rho + N \ilt &= \sr \ilt  \\
\eprt\bet + \betr\eprt &= \xi \irt &
\bepr \irt &= N \eprt &
\sr\ilt\bet &= \betr\ilt &
\eprt\rho + \bepr\ilt &= \ilt\bep 
\end{align*}
\end{lem}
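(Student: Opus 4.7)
The plan is to verify each of the eight identities by direct matrix computation. Each of the maps $\ilt$, $\irt$, $\eprt$, $\bepr$, $\betr$, $N$, and $\rho$ is defined as an explicit column vector or sparse matrix whose entries lie in $\{0,\id,\sr,\sr^2,\ldots,\bep,\bet\}$ (with appropriate grading shifts). Since $\sr$ and $\xi$ are central scalars in the relevant $\Hom$-spaces, and $\bep$ and $\bet$ are the only ``nontrivial'' morphisms that appear, every calculation reduces to manipulating a sparse matrix whose products are controlled entirely by Lemma~\ref{lem:epsilon0}, namely $\bep\bep=0$, $\bet\bet=0$, and $\bep\bet+\bet\bep=\xi\cdot\id$.

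For the two vanishing identities $\eprt\bep=0$ and $\bepr\eprt=0$, the only potentially nonzero entry of the composite column is $\bep\circ\bep$, which vanishes by Lemma~\ref{lem:epsilon0}(1); for $\bepr\eprt$ this requires noting that the bottom entry of $\eprt$ is hit precisely by the $\bep$ lying in the penultimate row of $\bepr$ (the last row of $\bepr$ being zero). The identities $\irt\bep=\eprt$ and $\irt\bet=\betr\irt$ are then immediate from the fact that $\irt$ selects the last summand of $\bEr_i$: pre-composing with $\bep$ (respectively post-composing $\irt$ by $\betr$) simply recovers $\eprt$ (respectively $\betr\irt$) by reading off the bottom entry.

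For the three identities $\irt\rho+N\ilt=\sr\ilt$, $\bepr\irt=N\eprt$, and $\sr\ilt\bet=\betr\ilt$, the key observation is that both $N$ and the super-diagonal of $\betr$ act on a column vector by shifting its entries \emph{up} by one position. Applied to $\ilt=(\id,\sr,\ldots,\sr^{n-i-1})^T$, this shift multiplies the visible portion of the column by $\sr$ while dropping the bottom entry; the bottom entry is then restored by $\irt\rho=\sr^{n-i}\cdot\irt$, giving the first identity. The remaining two identities $\eprt\bet+\betr\eprt=\xi\irt$ and $\eprt\rho+\bepr\ilt=\ilt\bep$ combine the same shifting mechanism with the relation $\bep\bet+\bet\bep=\xi\cdot\id$ in the bottom coordinate.

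The only real obstacle is notational bookkeeping: each occurrence of $\bepr$, $\betr$, $N$, $\ilt$, $\irt$, $\eprt$, or $\rho$ implicitly refers to a particular value of~$i$, and the levels on the two sides of an equation need not coincide (for instance, in $\sr\ilt\bet=\betr\ilt$ the left-hand $\ilt$ lives at level $i$ while the right-hand $\ilt$ lives at level $i-1$). One must verify that the sources, targets, and grading shifts match on both sides; once this is done, each of the eight equations becomes visible by inspection of matrix entries, with no further input beyond Lemma~\ref{lem:epsilon0} and the centrality of $\sr$.
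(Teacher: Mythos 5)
Your proposal is correct and takes the same approach as the paper, which simply says ``These equations are all straightforward matrix calculations from the definitions above''; your write-up supplies the details of exactly that computation, correctly identifying that the only inputs are the definitions of the maps, the relations $\bep\bep=\bet\bet=0$, $\bep\bet+\bet\bep=\xi\cdot\id$ from Lemma~\ref{lem:epsilon0}, the centrality of $\sr$ and $\xi$, and the up-shift behavior of $N$ and the super-diagonal of $\betr$ on column vectors, while noting the need to track which level $i$ each occurrence of $\ilt,\irt,\eprt,\bepr,\betr$ lives at.
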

\begin{proof}
These equations are all straightforward matrix calculations from the definitions above.
\end{proof}

Lastly, define maps $\plt: \bEr_i\la-1\ra \to \bEp_i\la -n+i\ra$, $\prt: \bEr_i\la 1\ra \to \bEp_i\la n-i\ra$, and $\etalt:  \bEr_{i-1}\la 1\ra[-1] \to \bEp_i\la -n+i\ra$ by
\[
\plt =
\begin{bsm}
\id & 0 & \cdots & 0
\end{bsm},
\qquad
\prt =
\begin{bsm}
\sr^{n-i-1} & \cdots & \sr & \id
\end{bsm},
\qquad
\etalt = 
\begin{bsm}
\bet & 0 & \cdots & 0
\end{bsm}.
\]

\begin{lem}\label{lem:bunch2}
We have
\begin{align*}
\etalt\betr = \bet\etalt &= 0 &
\etalt &= \bet\plt &
\plt\bepr &= \bep\plt &
\prt N + \rho\plt &= \sr\prt \\ 
\etalt\bepr + \bep\etalt &= \xi\plt &
\etalt N &= \plt\betr &
\prt\bepr &= \sr\bep\prt &
\prt\betr + \rho\etalt &= \bet\prt
\end{align*}
\end{lem}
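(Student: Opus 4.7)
The plan is to verify each of the eight identities by a direct matrix computation, following the template of the proof of Lemma~\ref{lem:bunch1}. The key data are the explicit descriptions: $\plt = [\id, 0, \ldots, 0]$ and $\etalt = [\bet, 0, \ldots, 0]$ are row vectors of length $n-i$ whose only nonzero entry is in the first slot, while $\prt = [\sr^{n-i-1}, \ldots, \sr, \id]$ and $\rho = \sr^{n-i}$. The matrices $\bepr$, $\betr$, and $N$ are those written out in Section~\ref{ss:dsum1}; in particular, $\bepr$ has $\bep$ down the main diagonal of its first $(n-i)$ rows with a final zero row, while $\betr$ has $\bet$ down the super-diagonal with a first zero column.

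The identities involving only $\plt$, $\etalt$, $\bepr$, $\betr$ reduce quickly to the relations $\bep^2 = \bet^2 = 0$ and $\bep\bet + \bet\bep = \xi \cdot \id$ from Lemma~\ref{lem:epsilon0}. For example, $\etalt\betr$ picks out the first column of $\betr$, which is zero; $\bet\etalt = \bet\bet\plt = 0$; $\etalt = \bet\plt$ and $\plt\bepr = \bep\plt$ are immediate from matching first entries; and $\etalt\bepr + \bep\etalt$ has first coordinate $\bet\bep + \bep\bet = \xi \cdot \id$ and all other coordinates zero, matching $\xi\plt$.

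The identities involving $\prt$, $N$, and $\rho$ are scalar in $\sr$. For instance, $\prt N$ shifts the entries of $\prt$ one position to the right, yielding $[0, \sr^{n-i-1}, \ldots, \sr]$; adding $\rho\plt = [\sr^{n-i}, 0, \ldots, 0]$ gives $[\sr^{n-i}, \sr^{n-i-1}, \ldots, \sr] = \sr\prt$. A parallel calculation handles $\prt\bepr = \sr\bep\prt$ by reading off the block-diagonal structure of $\bepr$: the $j$-th entry of $\prt\bepr$ equals $\sr^{n-i-j+1}\bep$, which is exactly the $j$-th entry of $\sr\bep\prt$. The remaining identities $\etalt N = \plt\betr$ and $\prt\betr + \rho\etalt = \bet\prt$ are of the same flavor.

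I do not anticipate any real obstacle; the lemma is careful matrix bookkeeping, with no step harder than those in Lemma~\ref{lem:bunch1}. As a conceptual sanity check, one may note that $\plt$, $\prt$, $\etalt$ are formally the Verdier duals of $\irt$, $\ilt$, $\eprt$ respectively (with $\bep \leftrightarrow \bet$ and composition reversed), so Lemma~\ref{lem:bunch2} is in essence the Verdier dual of Lemma~\ref{lem:bunch1}; directly checking the matrix identities is, however, quicker than setting up the duality compatibility.
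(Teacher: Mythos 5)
Your approach—direct matrix computation, mirroring the proof of Lemma~\ref{lem:bunch1}—is exactly what the paper does; its proof of this lemma is simply ``Similar to Lemma~\ref{lem:bunch1},'' which in turn says ``straightforward matrix calculations from the definitions.'' One small bookkeeping slip: $\etalt\betr$ is not $\bet$ applied to the first \emph{column} of $\betr$ (which happens to be zero), but $\bet$ applied to the first \emph{row} of $\betr$, namely $[\,0,\ \bet,\ 0,\ \ldots\,]$, so the product is $[\,0,\ \bet\bet,\ 0,\ \ldots\,]$ and vanishes because $\bet^2 = 0$ (Lemma~\ref{lem:epsilon0}); the conclusion and the rest of your calculations are correct.
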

\begin{proof}
Similar to Lemma~\ref{lem:bunch1}.
\end{proof}

\subsection{Second round of direct sums}
\label{ss:dsum2}

Let
\begin{align*}
\BElt &= \bEp_0\la -n\ra \oplus \bEp_1\la 1-n\ra \oplus \cdots \oplus \bEp_n, \\
\BErt &= \bEp_0\la n\ra \oplus \bEp_1\la n-1\ra \oplus \cdots \oplus \bEp_n.
\end{align*}
Define $\Bep: \BElt \to \BElt[1]$ and $\Bet: \BElt \to \BElt\la-2\ra[1]$ by
\[
\Bep =
\begin{bsm}
0 & \bep \\
& 0 & \bep \\
& & \ddots \\
& & & 0 & \bep \\
& & & & 0
\end{bsm},
\qquad
\Bet = 
\begin{bsm}
0 \\
\bet & 0\\
& & \ddots \\
& & \bet & 0 \\
& & & \bet & 0
\end{bsm}.
\]
The same matrices also define maps $\Bep: \BErt \to \BErt\la -2\ra[1]$ and $\Bet: \BErt \to \BErt[1]$. It will be clear from context whether we are working with $\BElt$ or $\BErt$, so no confusion should result from this overloading of notation.

\begin{lem}
\phantomsection\label{lem:epsilon2}
\begin{enumerate}
\item We have $\Bep \circ \Bep = 0$ and $\Bet \circ \Bet = 0$.
\item We have $\Bep\Bet + \Bet\Bep = \xi \cdot \id$.
\end{enumerate}
\end{lem}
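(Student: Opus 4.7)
The statement is a ``totalization'' of Lemma~\ref{lem:epsilon0}: the operators $\Bep$ and $\Bet$ have a bidiagonal structure with $\bep$ on the super-diagonal and $\bet$ on the sub-diagonal respectively, so the whole verification is a straightforward matrix computation that reduces entry-by-entry to Lemma~\ref{lem:epsilon0}.

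For part~(1), I would compute $\Bep \circ \Bep$ block by block. The only potentially nonzero entry of this product is in position $(i, i+2)$ (going $\bEp_{i+2} \to \bEp_{i+1}\{1\} \to \bEp_i\{2\}$), and that entry equals $\bep \circ \bep$, which vanishes by Lemma~\ref{lem:epsilon0}(1). Identically, $\Bet \circ \Bet$ has at most one nonzero block per row, equal to $\bet \circ \bet = 0$. These computations depend only on the bidiagonal pattern of the matrices, so they are valid for both the $\BElt$ and $\BErt$ versions of $\Bep$ and $\Bet$ without modification.

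For part~(2), I would note that the matrix products $\Bep \Bet$ and $\Bet \Bep$ are both purely diagonal: the only way $(\Bep\Bet)_{ij}$ can be nonzero is to travel $j \to j-1 \to j$ via super-diagonal of $\Bep$ then sub-diagonal of $\Bet$, forcing $j=i$ and giving $\bep\bet$; dually $(\Bet\Bep)_{ii} = \bet\bep$. Summing and applying Lemma~\ref{lem:epsilon0}(2) block by block yields $\xi \cdot \id$ on each diagonal summand.

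The only subtle point---and the main thing to double-check---is what happens at the two endpoints $i=0$ and $i=n$. At $i=0$, the $\Bet\Bep$ contribution is absent (there is no $\bEp_{-1}$), so we need $\bep\bet = \xi \cdot \id$ on $\bEp_0$ by itself; symmetrically at $i=n$ we need $\bet\bep = \xi \cdot \id$ on $\bEp_n$. Both follow from Lemma~\ref{lem:epsilon0}(2) together with the convention spelled out in Section~\ref{ss:dsum1} that $\bep$ out of $\bEp_0$ and $\bet$ out of $\bEp_n$ vanish, so that the ``missing'' term in $\bep\bet + \bet\bep = \xi \cdot \id$ is already zero. Since the shift conventions defining $\BElt$ and $\BErt$ are chosen precisely so that the same matrix expressions give well-defined morphisms of the stated degrees, no separate argument is needed for the two cases. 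I do not expect any genuine obstacle; the work is entirely bookkeeping.
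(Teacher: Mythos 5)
Your proof is correct and matches the paper's approach: reduce the block-bidiagonal matrix computation to Lemma~\ref{lem:epsilon0}, observing that $\Bep\Bet$ and $\Bet\Bep$ are purely diagonal with entries $\bep\bet$ and $\bet\bep$, and that the ``missing'' term at the endpoints $\bEp_0$ and $\bEp_n$ is handled by the zero-map conventions. The paper's own proof says exactly this, just more tersely (its parenthetical remark about $|I|=0$ and $|I|=n$ is your endpoint analysis).
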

\begin{proof}
This follows easily from Lemma~\ref{lem:epsilon0}. (Note that in the special case $|I| = 0$, $\bep\bet = \xi \cdot \id$, and for $|I| = n$, $\bet\bep = \xi \cdot \id$.)
\end{proof}

Next, let
\[
\bEtot = \bEr_0 \oplus \bEr_1 \oplus \cdots \oplus \bEr_{n-2} \oplus \bEr_{n-1}.
\]
(Recall that $\bEr_n = 0$.  The object $\bEtot$ has only $n$ summands, in contrast with $\BElt$ and $\BErt$, which have $n+1$ summands each.) Define maps $\bN: \bEtot \to \bEtot\la 2\ra$, $\Bepr: \bEtot \to \bEtot[1]$, and $\Betr: \bEtot \to \bEtot[1]$ by
\[
\bN = 
\begin{bsm}
N \\
& N \\
& & \ddots \\
& & & N
\end{bsm},
\qquad
\Bepr =
\begin{bsm}
0 & \bepr \\
& 0 & \bepr \\
& & \ddots \\
& & & 0 & \bepr \\
& & & & 0
\end{bsm},
\qquad
\Betr = 
\begin{bsm}
0 \\
\betr & 0\\
& & \ddots \\
& & \betr & 0 \\
& & & \betr & 0
\end{bsm}.
\]

\begin{lem}
\phantomsection\label{lem:epsilon3}
\begin{enumerate}
\item We have $\Bepr \circ \Bepr = 0$ and $\Betr \circ \Betr = 0$.
\item We have $\Bepr \circ \bN = \bN \circ \Bepr$ and $\Betr \circ \bN = \bN \circ \Betr$.
\item We have $\Bepr\Betr + \Betr\Bepr = \xi \cdot \bN$.\label{it:epsilon-xi3}
\end{enumerate}
\end{lem}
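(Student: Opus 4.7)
The plan is to prove Lemma~\ref{lem:epsilon3} in exact parallel with Lemma~\ref{lem:epsilon2}: since $\bEtot$ is a direct sum of the summands $\bEr_i$, and $\Bepr$, $\Betr$, $\bN$ are each defined by a block matrix whose entries are drawn from $\{\bepr, \betr, N, 0\}$, each assertion reduces by routine block-matrix multiplication to a single identity taken from Lemma~\ref{lem:epsilon1}. Concretely, $\bN$ is block-diagonal with $N$ on each diagonal block; $\Bepr$ is block-super-diagonal with $\bepr$ in the $(i,i+1)$-block (sending $\bEr_{i+1} \to \bEr_i[1]$); and $\Betr$ is block-sub-diagonal with $\betr$ in the $(i,i-1)$-block (sending $\bEr_{i-1} \to \bEr_i[1]$).

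For part (1), the only possibly nonzero blocks of $\Bepr \circ \Bepr$ lie on the second super-diagonal and equal $\bepr \circ \bepr$, which vanishes by Lemma~\ref{lem:epsilon1}(1); analogously for $\Betr \circ \Betr$. For part (2), since $\bN$ is block-diagonal, the block-$(i,i+1)$ entry of $\Bepr \circ \bN$ is $\bepr \circ N$ and that of $\bN \circ \Bepr$ is $N \circ \bepr$, which coincide by Lemma~\ref{lem:epsilon1}(2); the calculation for $\Betr$ is identical.

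For part (3), the key observation is that because $\Bepr$ shifts block index down by one and $\Betr$ shifts it up by one, both $\Bepr \Betr$ and $\Betr \Bepr$ are block-diagonal. On the $\bEr_i$-summand, $\Bepr \Betr$ acts as $\bepr \circ \betr$ (via the intermediate $\bEr_{i+1}$), and $\Betr \Bepr$ acts as $\betr \circ \bepr$ (via the intermediate $\bEr_{i-1}$). Their sum therefore equals $\xi \cdot N$ on each summand by Lemma~\ref{lem:epsilon1}(3), which is exactly the restriction of $\xi \cdot \bN$ to $\bEr_i$.

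The only point requiring some care is the treatment of the endpoints: on $\bEr_0$ the term $\Betr \Bepr$ loses its $\bepr$-factor because $\bEr_{-1} = 0$, and on $\bEr_{n-1}$ the term $\Bepr \Betr$ loses its $\betr$-factor because $\bEr_n = 0$. In each case, the vanishing summand matches the corresponding vanishing summand in Lemma~\ref{lem:epsilon1}(3), so the identity $\bepr\betr + \betr\bepr = \xi\cdot N$ still holds for these extreme values of $i$ (as already noted in Remark~\ref{rmk:unit-ok1}). Thus no new content beyond Lemma~\ref{lem:epsilon1} is needed, and the proof is simply a verification of three block-matrix identities.
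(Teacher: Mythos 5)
Your proof is correct and is essentially the argument the paper is compressing when it says this ``follows easily from Lemma~\ref{lem:epsilon1}.'' Unpacking the block-matrix structure of $\bN$, $\Bepr$, $\Betr$ and reducing each block entry to the corresponding identity from Lemma~\ref{lem:epsilon1} is exactly what is meant. One tiny inaccuracy in the endpoint discussion: Remark~\ref{rmk:unit-ok1} only addresses the $\bEr_{n-1}$ case (where $\Bepr\Betr$ loses its $\betr$-factor because $\bEr_n=0$); the $\bEr_0$ case, where the $\Betr\Bepr$-term drops out because there is no $\bEr_{-1}$, is handled by the same convention but is not what that remark observes. This has no effect on the validity of the argument.
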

\begin{proof}
This follows easily from Lemma~\ref{lem:epsilon1}.
\end{proof}

\begin{rmk}\label{rmk:unit-ok3}
In particular, the proof of Lemma~\ref{lem:epsilon3}\eqref{it:epsilon-xi3}, like that of Lemma~\ref{lem:epsilon1}\eqref{it:epsilon-xi1} (see Remark~\ref{rmk:unit-ok1}),  relies on Proposition~\ref{prop:aff-unit-sum} only for $|I| \le n-2$.
\end{rmk}

We now introduce maps $\Bilt: \BElt \to \bEtot\la -1\ra$, $\Birt: \BErt \to \bEtot\la 1\ra$, and $\Beprt: \BErt \to \bEtot\la -1\ra[1]$ as follows:
\[
\Bilt = 
\begin{bsm}
\ilt & & \cdots & & 0\\\
& \ilt & \cdots & & 0 \\
& & \ddots \\
& & & \ilt & 0
\end{bsm},
\qquad
\Birt = 
\begin{bsm}
\irt & & \cdots & & 0\\\
& \irt & \cdots & & 0 \\
& & \ddots \\
& & & \irt & 0
\end{bsm},
\qquad
\Beprt = 
\begin{bsm}
0 & \eprt \\
0 & & \eprt \\
\vdots & & & \ddots \\
0 &  & & & \eprt
\end{bsm}.
\]
We also let $\Brho: \BElt \to \BErt$ be the map given by
\[
\Brho =
\begin{bsm}
\sr^n \\
& \sr^{n-1} \\
& & \ddots \\
& & & \sr \\
& & & & \id
\end{bsm}.
\]
The following lemma is immediate from the definitions.

\begin{lem}\label{lem:Brho}
We have $\Brho\Bep = \sr\Bep\Brho$ and $\Bet\Brho = \sr\Brho\Bet$.
\end{lem}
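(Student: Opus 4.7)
The plan is a direct block-matrix computation. Observe that $\BElt$ and $\BErt$ share the same underlying sequence of summands, indexed by $k \in \{0,1,\ldots,n\}$, and differ only in Tate twists. With respect to this common indexing, $\Brho$ is block-diagonal with $k$-th block $\sr^{n-k}\cdot\id_{\bEp_k}$, while $\Bep$ (respectively $\Bet$) has a single nonzero block $\bep$ (respectively $\bet$) at each superdiagonal position $(k-1,k)$ (respectively subdiagonal position $(k,k-1)$). By the convention of~\eqref{eqn:r-mor}, the indeterminate $\sr$ is central in $R^\vee \otimes \uEnd(-)$, so it commutes with every block entry.

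For the first identity, I would compare the $(k-1,k)$-block of both sides, which is the only potentially nonzero position. On the left, $\Brho\Bep$ yields $\sr^{n-(k-1)} \circ \bep = \sr^{n-k+1}\bep$. On the right, $\Bep\Brho$ yields $\bep \circ \sr^{n-k} = \sr^{n-k}\bep$, and multiplying by $\sr$ gives $\sr^{n-k+1}\bep$, as required. The identity $\Bet\Brho = \sr\Brho\Bet$ follows by the symmetric argument at position $(k,k-1)$: the $(k-1)$-th diagonal entry of $\Brho$ contributes $\sr^{n-k+1}$ on the left-hand side, while the $k$-th diagonal entry contributes $\sr^{n-k}$ on the right-hand side, and the discrepancy is absorbed by the factor of $\sr$ in the statement.

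There is no real obstacle here; this is exactly why the authors remark that the lemma is immediate from the definitions. The only thing that warrants a moment's care is verifying that the bidegrees match, i.e., that $\sr\Bep\Brho$ and $\Brho\Bep$ both land in $\BErt[1]$ (and analogously for $\Bet$). This follows from $\deg \sr = (0,-2)$ together with the specified Tate twists for $\Bep$ on $\BElt$ versus on $\BErt$, which differ by $\la -2\ra$ in precisely the right direction.
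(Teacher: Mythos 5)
Your computation is correct and is precisely the block-matrix unwinding that the paper compresses into the single remark ``immediate from the definitions''; the only substantive observations needed are that $\Brho$ is diagonal with $k$-th block $\sr^{n-k}$, that $\Bep$ (resp.\ $\Bet$) sits on the superdiagonal (resp.\ subdiagonal), and that the adjacent diagonal powers of $\sr$ differ by exactly one, which the extra factor of $\sr$ in each identity accounts for. Your bidegree sanity check at the end is the right thing to note but poses no real issue.
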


\begin{lem}\label{lem:Bunch1}
We have:
\begin{align*}
\Beprt\Bep = \Bepr \Beprt &= 0 &
\Birt\Bep &= \Beprt &
\Birt\Bet &= \Betr \Birt &
\Birt\Brho + \bN \Bilt &= \sr \Bilt \\
\Beprt\Bet + \Betr\Beprt &= \xi \Birt &
\Bepr \Birt &= \bN \Beprt &
\sr\Bilt\Bet &= \Betr\Bilt &
\Beprt\Brho + \Bepr\Bilt &=  \Bilt\Bep
\end{align*}
\end{lem}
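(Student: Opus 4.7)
My plan is to view every identity in Lemma~\ref{lem:Bunch1} as a block-matrix calculation and reduce it, blockwise, to the corresponding identity established in Lemma~\ref{lem:bunch1} (with Lemma~\ref{lem:Brho} handling the two identities that involve $\Brho$). The starting observation is that, with respect to the decompositions
\[
\BElt = \bigoplus_{k=0}^{n} \bEp_k\la k-n\ra, \qquad \BErt = \bigoplus_{k=0}^{n} \bEp_k\la n-k\ra, \qquad \bEtot = \bigoplus_{k=0}^{n-1} \bEr_k,
\]
each of the maps $\Bep$, $\Bet$, $\Bepr$, $\Betr$, $\Bilt$, $\Birt$, $\Beprt$, $\bN$, $\Brho$ is either block-diagonal or supported on a single super- or subdiagonal, with nonzero entries that are (shifts of) the unbolded maps $\bep$, $\bet$, $\bepr$, $\betr$, $\ilt$, $\irt$, $\eprt$, $N$, or powers of $\sr$.

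First, I would handle the seven identities not involving $\Brho$---namely $\Beprt\Bep=0$, $\Bepr\Beprt=0$, $\Birt\Bep=\Beprt$, $\Birt\Bet=\Betr\Birt$, $\Beprt\Bet+\Betr\Beprt=\xi\Birt$, $\Bepr\Birt=\bN\Beprt$, and $\sr\Bilt\Bet=\Betr\Bilt$---by expanding both sides as block matrices and checking entry-by-entry agreement. On each nonzero block this produces precisely one of the unbolded identities of Lemma~\ref{lem:bunch1} with the parameter $i$ equal to the summand index $k$. This is exactly the same pattern as the proof of Lemma~\ref{lem:epsilon2}, which derived its bold identities from Lemma~\ref{lem:epsilon0} blockwise.

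Next, I would address the two identities involving $\Brho$, namely $\Birt\Brho+\bN\Bilt=\sr\Bilt$ and $\Beprt\Brho+\Bepr\Bilt=\Bilt\Bep$. The crucial point, already built into the definition of $\Brho$, is that on the $k$-th summand of $\BElt$ the map $\Brho$ acts as $\sr^{n-k}$, which is precisely the scalar $\rho=\sr^{n-i}$ of Lemma~\ref{lem:bunch1} with $i=k$. After this identification, each of these two identities splits into $n+1$ copies (one for each $k$) of $\irt\rho+N\ilt=\sr\ilt$ or $\eprt\rho+\bepr\ilt=\ilt\bep$ respectively.

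The only genuine obstacle will be bookkeeping: the various bold maps shift by different amounts---some by $[1]$, some by $\la-2\ra[1]$---and $\Brho$ contributes different powers of $\sr$ on different summands, so one must verify carefully that the cohomological degrees and Tate twists line up on each block before invoking Lemma~\ref{lem:bunch1}. Beyond this, I anticipate no conceptual difficulty, since the construction of the bold maps in Section~\ref{ss:dsum2} was engineered to mirror that of the unbolded maps in Section~\ref{ss:dsum1} exactly one step up in the hierarchy of direct sums.
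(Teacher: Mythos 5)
Your proposal matches the paper's proof, which states that these equations are straightforward matrix calculations using Lemma~\ref{lem:bunch1}; the blockwise reduction you describe is exactly right. One small slip: in your opening sentence you cite Lemma~\ref{lem:Brho} for the two identities involving $\Brho$, but that lemma is not actually needed here --- as your own body text correctly observes, the relevant unbolded identities $\irt\rho + N\ilt = \sr\ilt$ and $\eprt\rho + \bepr\ilt = \ilt\bep$ are themselves part of Lemma~\ref{lem:bunch1}.
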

\begin{proof}
These equations are all straightforward matrix calculations using Lemma~\ref{lem:bunch1}.
\end{proof}

We conclude this section with maps $\Bplt: \bEtot\la-1\ra \to \BElt$, $\Bprt: \bEtot\la 1\ra \to \BErt$, and $\Betlt: \bEtot\la 1\ra[-1] \to \BElt$ defined as follows:
\[
\Bplt =
\begin{bsm}
\plt \\
& \plt \\
& & \ddots \\
& & & \plt \\
0 & 0 & \cdots & 0
\end{bsm},
\qquad
\Bprt =
\begin{bsm}
\prt \\
& \prt \\
& & \ddots \\
& & & \prt \\
0 & 0 & \cdots & 0
\end{bsm},
\qquad
\Betlt =
\begin{bsm}
0 & 0 & \cdots & 0 \\
\etalt \\
& \etalt \\
& & \ddots \\
& & & \etalt
\end{bsm}.
\]

\begin{lem}\label{lem:Bunch2}
We have
\begin{align*}
\Betlt\Betr = \Bet\Betlt &= 0 &
\Betlt &= \Bet\Bplt &
\Bplt\Bepr &= \Bep\Bplt &
\Bprt\bN + \Brho \Bplt &= \sr\Bprt \\
\Betlt\Bepr + \Bep\Betlt &= \xi\Bplt &
\Betlt \bN &= \Bplt\Betr &
\Bprt\Bepr &= \sr\Bep\Bprt &
\Bprt\Betr + \Brho\Betlt &= \Bet\Bprt  
\end{align*}
\end{lem}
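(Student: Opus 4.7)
The plan is to verify each of the eight identities by writing out both sides as block matrices in the decompositions $\BElt = \bigoplus_{i=0}^n \bEp_i\la-n+i\ra$, $\BErt = \bigoplus_{i=0}^n \bEp_i\la n-i\ra$, and $\bEtot = \bigoplus_{i=0}^{n-1} \bEr_i$, and then reading off each block. Every entry of every such product turns out to be a composition of the ``lowercase'' maps $\etalt$, $\betr$, $\bet$, $\plt$, $\bepr$, $\bep$, $N$, $\prt$, $\rho$, or a scalar multiple thereof, so the computation reduces to the eight identities of Lemma~\ref{lem:bunch2}, exactly as the proof of Lemma~\ref{lem:Bunch1} reduced to Lemma~\ref{lem:bunch1}.

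For the four ``simple'' equations in the first and third columns of the statement ($\Betlt\Betr = \Bet\Betlt = 0$, $\Betlt = \Bet\Bplt$, $\Bplt\Bepr = \Bep\Bplt$, and $\Bprt\Bepr = \sr\Bep\Bprt$), the block structure is such that at most one term contributes to any given $(i,j)$-block, and the identity is an immediate consequence of the corresponding equation in Lemma~\ref{lem:bunch2} (applied at the appropriate index $i$). For the four remaining equations ($\Betlt\Bepr + \Bep\Betlt = \xi\Bplt$, $\Betlt\bN = \Bplt\Betr$, $\Bprt\bN + \Brho\Bplt = \sr\Bprt$, and $\Bprt\Betr + \Brho\Betlt = \Bet\Bprt$), two terms combine in each non-boundary block, and their sum equals the right-hand side block by the corresponding nontrivial identity of Lemma~\ref{lem:bunch2}.

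The only point requiring genuine care is the behavior at the boundaries $i=0$ and $i=n$, where some block rows or columns of $\Bplt$, $\Bprt$, $\Betlt$, $\Betr$ are zero by convention (reflecting $\bEr_n = 0$). In each case, the would-be identity to check in the missing block is empty or trivially holds, in the same spirit as Remarks~\ref{rmk:unit-ok1} and~\ref{rmk:unit-ok3}; no use of Proposition~\ref{prop:aff-unit-sum} in the ``forbidden'' range $|I| = n-1$ or $|I| = n$ is needed. The main nuisance is therefore purely bookkeeping — tracking the index shifts between $\BElt$/$\BErt$ (which have $n+1$ summands) and $\bEtot$ (which has $n$ summands) — and once this is set up, each of the eight equations follows from a one-line matrix computation plus the appropriate entry in Lemma~\ref{lem:bunch2}.
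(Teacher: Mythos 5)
Your approach — unwinding the block-matrix definitions and reducing each of the eight identities to Lemma~\ref{lem:bunch2}, exactly as the proof of Lemma~\ref{lem:Bunch1} reduces to Lemma~\ref{lem:bunch1} — is the same as the paper's, whose proof is the one-liner ``Similar to Lemma~\ref{lem:Bunch1}.'' The core argument is correct, and your comments on the index bookkeeping between the $(n+1)$-fold sums $\BElt$, $\BErt$ and the $n$-fold sum $\bEtot$ are accurate.

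One factual claim in your aside is false, and worth flagging even though it does not affect the validity of the proof on $\Aff$. You assert that ``no use of Proposition~\ref{prop:aff-unit-sum} in the forbidden range $|I|=n-1$ or $|I|=n$ is needed,'' in the spirit of Remarks~\ref{rmk:unit-ok1} and~\ref{rmk:unit-ok3}. This is not the case. Take the identity $\Betlt\Bepr + \Bep\Betlt = \xi\Bplt$: its diagonal block at index $i = n-1$ unwinds to $\etalt\bepr + \bep\etalt = \xi\plt$ on $\bEr_{n-1}$, whose sole entry is $\bet\bep + \bep\bet = \xi\cdot\id$ on $\bEp_{n-1}$. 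This is Lemma~\ref{lem:epsilon0}(2) for $k = n-1$, which is proved by summing Proposition~\ref{prop:aff-unit-sum} over all $I$ with $|I| = n-1$ — precisely the forbidden range. The contrast with Remarks~\ref{rmk:unit-ok1} and~\ref{rmk:unit-ok3} is that there the block at $\bEr_{n-1}$ degenerates to $0 = 0$, while here it does not. The paper is well aware of this: in Section~\ref{ss:nearby2} it explicitly declines to claim that all lemmas of Section~\ref{sec:direct-sum} (including this one) transfer to $\ocGr_\ffw$, and the strategy there is to sidestep Lemma~\ref{lem:Bunch2} entirely and argue via pullback along the open chart. On $\Aff$, where Proposition~\ref{prop:aff-unit-sum} holds for every $I \subset [n]$, your proof of the stated lemma is fine — but the remark about the forbidden range should be dropped.
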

\begin{proof}
Similar to Lemma~\ref{lem:Bunch1}.
\end{proof}

\section{Push-forwards from the generic part}
\label{sec:standard}

In this section, we will use the objects from Section~\ref{ss:dsum2} to carry out some sheaf-theoretic computations.  We introduce the notation
\[
\cE_\gen = \ubk_{\Affs_\gen}\{n\} \in \Parity_\Gm(\Affs_\gen/T,\bk).
\]

\begin{prop}\label{prop:eq-std}
In $\Dmix_\Gm(\Aff/T,\bk)$ or $\Dmix(\Aff/T,\bk)$, we have
\[
\bj_!\cE_\gen \cong 
\begin{tikzcd}
\BElt \ar[loop right, distance=30, "{\sst[1]}"{description, pos=0.8}, "\Bep"]
\end{tikzcd}
\qquad\text{and}\qquad
\bj_*\cE_\gen \cong 
\begin{tikzcd}
\BErt \ar[loop right, distance=30, "{\sst[1]}"{description, pos=0.8}, "\Bet"]
\end{tikzcd}.
\]
\end{prop}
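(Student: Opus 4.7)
The plan is to establish the formula for $\bj_!\cE_\gen$ directly and deduce the formula for $\bj_*\cE_\gen$ by Verdier duality. Observe first that the graded parity sheaf $\BElt$ contains $\BElt' := \bigoplus_{i<n}\bEp_i\la i-n\ra$ as a sub-object preserved by $\Bep$, since $\Bep$ strictly decreases the index of $\bEp_i$. The quotient $(\BElt, \Bep)/(\BElt', \Bep|_{\BElt'})$ is the top summand $\cE([n]) = \ubk_\Aff\{n\}$, producing a distinguished triangle
\[
(\BElt', \Bep|_{\BElt'}) \to (\BElt, \Bep) \to \cE([n]) \to (\BElt', \Bep|_{\BElt'})[1].
\]
The strategy is to match this with the standard $(\bj_!, \bi^*)$ adjunction triangle $\bj_!\cE_\gen \to \cE([n]) \to \bi_*\ubk_{\Affs_0}\{n\} \to \bj_!\cE_\gen[1]$, so that $\bj_!\cE_\gen \cong (\BElt,\Bep)$ by rotating and comparing.

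The heart of the proof is producing an isomorphism $(\BElt', \Bep|_{\BElt'})[1] \cong \bi_*\ubk_{\Affs_0}\{n\}$ intertwining the two connecting maps from $\cE([n])$. For this, I would use the Mayer--Vietoris / \v{C}ech resolution associated with the closed cover $\Affs_0 = \bigcup_{i=1}^n H_i$, where $H_i = \{x_i = 0\} = \overline{\Affs_{[n]\setminus\{i\}}}$. For nonempty $J \subseteq [n]$, set $H_J = \bigcap_{j\in J} H_j = \overline{\Affs_{[n]\setminus J}}$, and re-index via $I = [n] \setminus J$. Then the \v{C}ech term $\ubk_{H_J}\{n\}$ becomes $\cE(I)\{n-|I|\}$ sitting in cohomological degree $n-1-|I|$, and each \v{C}ech restriction $\ubk_{H_J} \to \ubk_{H_{J\cup\{j\}}}$ becomes a twist of the morphism $\aep_j: \cE(I) \to \cE(I\setminus\{j\})\{1\}$. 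A direct degree- and twist-match then identifies the \v{C}ech complex, as a chain complex of parity sheaves, with $\BElt'[1]$ equipped with $\Bep|_{\BElt'}$, while the adjunction unit $\cE([n]) \to \bi_*\ubk_{\Affs_0}\{n\}$ becomes the $\bEp_n \to \bEp_{n-1}\{1\}$ component of $\Bep$.

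The main obstacle is reconciling signs: the \v{C}ech differential carries signs $(-1)^l$ tied to a simplicial ordering on $J$, while $\bep$ carries signs $(-1)^{|\{j < i : j \notin I\}|}$; matching them under $I = [n]\setminus J$ is a routine but careful combinatorial verification. The argument passes through both $\Dmix_\Gm$ and $\Dmix$, as $\Bep$ is free of $\bxi$-factors so the curvature condition in $\Dmix$ reduces to $\Bep^2 = 0$. Finally, for $\bj_*\cE_\gen$, apply Verdier duality to the established isomorphism: $\mathbb{D}$ interchanges $\bj_!$ with $\bj_*$, fixes the self-dual object $\cE_\gen$ (the shifted constant sheaf on smooth $\Affs_\gen$) up to normalization, converts $\la i-n\ra$ to $\la n-i\ra$ (so $\BElt$ becomes $\BErt$), and converts the $\aep_i$-built differential $\Bep$ into the $\aet_i$-built differential $\Bet$, yielding $\bj_*\cE_\gen \cong (\BErt, \Bet)$.
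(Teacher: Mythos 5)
Your strategy is genuinely different from the paper's, which proceeds by induction on $n$: the base case $n=1$ is imported from~\cite[\exgmder]{a:hgps}, and the inductive step exploits the factorization $\Aff \cong \bA^1 \times \bA^{n-1}$ together with a K\"unneth-type isomorphism $\bj_!\cE_\gen \cong (\bj'_!\cE'_\gen) \boxtimes (\bj''_!\cE''_\gen)$, after which the asserted complex emerges as the total complex of an explicit double complex. (For $\bj_*$ the paper simply says ``similar,'' meaning a parallel induction; your proposal to deduce it by Verdier duality from the $\bj_!$ case is a reasonable alternative.)

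The Mayer--Vietoris/\v{C}ech route you propose is attractive and the bookkeeping in it checks out: your re-indexing $I = [n]\setminus J$ does place $\bigoplus_{|I|=i}\ubk_{\overline{\Affs_I}}\{n\}$ in the correct cohomological degree $n-i-1$ to match the summands of $\BElt'[1]$, and the $\aep_i$-built differential really is a sign-twist of the \v{C}ech differential. The gap is the step you call the ``heart of the proof'': you assert that the \v{C}ech complex of parity sheaves \emph{represents $\bi_*\ubk_{\Affs_0}\{n\}$ in $\Dmix$}, but you only have classical Mayer--Vietoris behind you, which proves exactness in the ordinary derived category. The mixed category $\Dmix$ is defined as a homotopy category of chain complexes of parity sheaves, not topologically, so one cannot transport this resolution for free; and the object you would need to identify, namely $\bi_*\bi^*\cE([n])$, sits in a rotation of the very recollement triangle whose fiber you are trying to compute. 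Unwinding the two triangles, your claim $(\BElt',\Bep|_{\BElt'})[1] \cong \bi_*\bi^*\cE([n])$ together with the compatibility of connecting maps is equivalent to the statement $\bj_!\cE_\gen \cong (\BElt,\Bep)$ itself, so as written the argument relocates the difficulty rather than resolving it. To close the gap you would need an independent computation of $\bi^*\cE([n])$ in $\Dmix$ (for instance by its own induction, or by appeal to machinery in~\cite{a:hgps} that you have not invoked); the paper's external-tensor induction is precisely one clean way of supplying that missing input.
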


\begin{rmk}\label{rmk:eq-std}
For a version of Proposition~\ref{prop:eq-std} that looks more like a ``classical'' chain complex, one can go down to the level of objects from Section~\ref{ss:dsum1}. In this language, $\bj_!\cE_\gen$ is given by
\[
\begin{tikzcd}[column sep=large]
\bEp_n \ar[r, "{\sst[1]}" description, "\bep" above=3] &
\bEp_{n-1}\la-1\ra \ar[r, "{\sst[1]}" description, "\bep" above=3] &
\cdots \ar[r, "{\sst[1]}" description, "\bep" above=3] &
\bEp_1\la 1-n\ra \ar[r, "{\sst[1]}" description, "\bep" above=3] &
\bEp_0\la -n\ra 
\end{tikzcd},
\]
and $\bj_*\cE_\gen$ is given by
\[
\begin{tikzcd}[column sep=large]
\bEp_0\la n\ra \ar[r, "{\sst[1]}" description, "\bet" above=3] &
\bEp_1\la n-1\ra \ar[r, "{\sst[1]}" description, "\bet" above=3] &
\cdots \ar[r, "{\sst[1]}" description, "\bet" above=3] &
\bEp_{n-1}\la 1\ra \ar[r, "{\sst[1]}" description, "\bet" above=3] &
\bEp_n 
\end{tikzcd}.
\]
\end{rmk}

\begin{proof}
We will prove the statement for $\bj_!\cE_\gen$.  The proof for $\bj_*\cE_\gen$ is similar.  We proceed by induction on $n$.  For $n = 1$, we must check that
\[
\bj_!\cE_\gen \cong 
\begin{tikzcd}[column sep=large]
\cE([1]) \ar[r, "{\sst[1]}" description, "\bep" above=3] & \cE(\varnothing)\la -1\ra
\end{tikzcd}.
\]
This holds by~\cite[\exgmder]{a:hgps} (see also~\cite[\S A.1]{ar:mpsfv3}).

We now turn to the general case.  Observe that if $\cF$ is a parity sheaf on $\bA^{n-1}$ and $\cG$ is a parity sheaf on $\bA^1$, then $\cF \boxtimes \cG$ is a parity sheaf on $\Aff$.  There is therefore a well-defined functor
\[
\boxtimes: \Dmix(\bA^1/\Gm, \bk) \times \Dmix(\bA^{n-1}/\Gme{n-1}, \bk) \to \Dmix(\Aff/\Gme{n},\bk).
\]
For convenience, let us label strata of $\bA^{n-1}$ by subsets of $\{2,\ldots, n\}$.  For any $I \subset [n]$, we identify $\Affs_I$ with $(\bA^1)_{I \cap \{1\}} \times (\bA^{n-1})_{I \cap \{2,\ldots,n\}}$. We claim that for any $\cF \in \Dmix(\bA^1/\Gm, \bk)$ and $\cG \in \Dmix(\bA^{n-1}/\Gme{n-1}, \bk)$ and any $I \subset [n]$, we have
\begin{equation}\label{eqn:box-restrict}
(\cF \boxtimes \cG)|_{\Affs_I} \cong \cF|_{\bA^1_{I \cap \{1\}}} \boxtimes \cG|_{\bA^{n-1}_{I \cap \{2, \ldots, n\}}}.
\end{equation}
It is enough to check this in the special case where $\cF$ and $\cG$ are parity sheaves.  In that case, the assertion is clear.

Let $U' \subset \bA^1$ and $U'' \subset \bA^{n-1}$ be the open sets consisting of points all of whose coordinates are nonzero, and let $\bj': U' \hookrightarrow \bA^1$ and $\bj'': U'' \hookrightarrow \bA^{n-1}$ be the inclusion maps.  More generally, we will use $'$ and $''$ below to indicate the $\bA^1$- or $\bA^{n-1}$-analogues of objects defined for $\Aff$.  We claim that
\begin{equation}\label{eqn:open-box}
\bj_!\cE_\gen \cong (\bj'_!\cE_\gen') \boxtimes (\bj''_!\cE_\gen'').
\end{equation}
Both sides have the same restriction to $U$, so it is enough to show that the restriction of the right-hand side to any stratum in the complement of $U$ is $0$.  This follows from~\eqref{eqn:box-restrict}.

By~\eqref{eqn:open-box}, $\bj_!\cE_\gen$ is given by the total complex of the following double complex:
\begin{equation}\label{eqn:open-double}
\begin{tikzcd}
\cE(\varnothing) \boxtimes \bEppp_{n-1} \ar[r, "{\sst[1]}" description, "-\id \boxtimes \bep''" above=3] &
\cE(\varnothing) \boxtimes \bEppp_{n-2}\la-1\ra \ar[r, "{\sst[1]}" description, "-\id \boxtimes \bep''" above=3] &
\cdots \ar[r, "{\sst[1]}" description, "-\id \boxtimes \bep''" above=3] &
\cE(\varnothing) \boxtimes \bEp_0\la -n+1\ra 
\\
\cE([1]) \boxtimes \bEppp_{n-1} \ar[r, "{\sst[1]}" description, "\id \boxtimes \bep''" above=3] \ar[u, "{\sst[1]}" description, "\bep' \boxtimes \id" left=3] &
\cE([1]) \boxtimes \bEppp_{n-2}\la-1\ra \ar[r, "{\sst[1]}" description, "\id \boxtimes \bep''" above=3] \ar[u, "{\sst[1]}" description, "\bep' \boxtimes \id" left=3] &
\cdots \ar[r, "{\sst[1]}" description, "\id \boxtimes \bep''" above=3] &
\cE([1]) \boxtimes \bEp_0\la -n+1\ra \ar[u, "{\sst[1]}" description, "\bep' \boxtimes \id" left=3]
\end{tikzcd}
\end{equation}
To finish the proof, we observe that we may identify
\[
\bEp_i \cong (\cE(\varnothing) \boxtimes \bEppp_i) \oplus (\cE([1]) \boxtimes \bEppp_{i-1}).
\]
Moreover, with respect to this identification, and taking into account the signs in the definition of $\bep$, we can write $\bep: \bEp_i \to \bEp_{i-1}\{1\}$ as
\[
\bep =
\begin{bsm}
-\id \boxtimes \bep'' &  \bep' \boxtimes \id \\
& \id \boxtimes \bep''
\end{bsm}.
\]
We conclude that the total complex of~\eqref{eqn:open-double} can be identified with the complex described in Remark~\ref{rmk:eq-std}.
\end{proof}

\begin{prop}\label{prop:mon-std}
In $\Dmix_\mon(\Aff/T,\bk)$, we have
\[
\bj_!\cJ(\cE_\gen) \cong 
\begin{tikzcd}
\BElt \ar[loop right, distance=30, "{\sst[1]}"{description, pos=0.8}, "\Bep + \sr\Bet"]
\end{tikzcd}
\qquad\text{and}\qquad
\bj_*\cJ(\cE_\gen) \cong 
\begin{tikzcd}
\BErt \ar[loop right, distance=30, "{\sst[1]}"{description, pos=0.8}, "\Bet + \sr\Bep"]
\end{tikzcd}.
\]
\end{prop}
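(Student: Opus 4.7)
The plan is to mirror the inductive argument of Proposition~\ref{prop:eq-std}, tracking the extra $\sr\Bet$ (resp.\ $\sr\Bep$) term forced by the monodromic differential condition $\delta^2 = \sr\xi\cdot\id$. As a preliminary check, using Lemma~\ref{lem:epsilon2} one computes
\[
(\Bep + \sr\Bet)^2 = \Bep^2 + \sr(\Bep\Bet + \Bet\Bep) + \sr^2\Bet^2 = \sr\xi\cdot\id_{\BElt},
\]
and similarly $(\Bet + \sr\Bep)^2 = \sr\xi\cdot\id_{\BErt}$, so both proposed objects lie in $\Dmix_\mon(\Aff/T,\bk)$.

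For the base case $n = 1$, one unwinds the definition of $\cJ$ from~\cite[\secjordan]{a:hgps} and adapts the $\Gm$-equivariant computation of~\cite[\exgmder]{a:hgps} to $\Dmix_\mon$, directly identifying $\bj_!\cJ(\cE_\gen)$ with $(\cE([1]) \oplus \cE(\varnothing)\la -1\ra,\ \bep + \sr\bet)$ and $\bj_*\cJ(\cE_\gen)$ with $(\cE(\varnothing)\la 1\ra \oplus \cE([1]),\ \bet + \sr\bep)$. This is essentially the only place where $\cJ$ needs to be unpacked explicitly.

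For the inductive step, one invokes a Künneth-type compatibility of $\cJ$ and $\bj_!$ with the external product $\boxtimes$ (built from the formalism of~\cite[\S6]{a:hgps}) to obtain
\[
\bj_!\cJ(\cE_\gen) \cong (\bj'_!\cJ(\cE_\gen')) \boxtimes (\bj''_!\cJ(\cE_\gen'')),
\]
where primes and double-primes distinguish factors on $\bA^1$ and $\bA^{n-1}$ exactly as in the proof of Proposition~\ref{prop:eq-std}. By induction, the right-hand side is the total complex of a double complex generalizing~\eqref{eqn:open-double} but with monodromic differentials $\bep' + \sr\bet'$ and $\bep'' + \sr\bet''$ in the two directions. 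Using the identification $\bEp_i \cong (\cE(\varnothing)\boxtimes\bEppp_i) \oplus (\cE([1])\boxtimes\bEppp_{i-1})$ from the proof of Proposition~\ref{prop:eq-std} together with the same sign bookkeeping, the total complex matches $(\BElt, \Bep + \sr\Bet)$. The argument for $\bj_*$ is analogous.

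The main obstacle I expect is establishing the Künneth-type compatibility of $\cJ$ and $\bj_!$ in the monodromic setting: once this structural input is in hand, the combinatorial identification with $(\BElt, \Bep + \sr\Bet)$ is essentially the same sign bookkeeping as for Proposition~\ref{prop:eq-std}, with the new $\sr\bet$-pieces in each factor fitting together to build the block-superdiagonal matrix $\sr\Bet$ (and symmetrically $\sr\Bep$ in the $\bj_*$ case).
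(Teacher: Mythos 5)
Your proposal takes a genuinely different route from the paper, and the route has a gap at exactly the spot you flagged.

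The paper never unwinds the definition of $\cJ$ and never attempts an inductive Künneth computation in $\Dmix_\mon$. Instead, it sets $\cF = (\BElt, \Bep + \sr\Bet)$ and characterizes $\bj_!\cJ(\cE_\gen)$ abstractly: it shows that the cone of $\sr\cdot\id : \cF\la -2\ra \to \cF$ is isomorphic (via explicit chain maps built from $\id$ and $\pm\Bet$, using Lemma~\ref{lem:epsilon2}) to $\Mon(\bj_!\cE_\gen)$, which is already known from Proposition~\ref{prop:eq-std}. Applying $\bi^*$ to the resulting triangle and using that $\Mon$ commutes with recollement gives $\bi^*\Mon(\bj_!\cE_\gen) = 0$, so $\sr$ acts invertibly on $\bi^*\cF$; but $\sr$ is also nilpotent on $X_0$ because $\Mon$ is an equivalence there, forcing $\bi^*\cF = 0$. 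Together with $\bj^*\cF \cong \cJ(\cE_\gen)$ this pins down $\cF \cong \bj_!\cJ(\cE_\gen)$. The whole argument thus leans on the uniqueness characterization of $\bj_!$ and avoids touching the internals of $\cJ$ beyond its restriction to $X_\gen$.

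Your plan instead requires the identity $\bj_!\cJ(\cE_\gen) \cong (\bj'_!\cJ(\cE'_\gen)) \boxtimes (\bj''_!\cJ(\cE''_\gen))$ in the monodromic derived category, and this is where the gap is. It is not a formal consequence of~\cite[\S6]{a:hgps}, which treats recollement, not external products; and in $\Dmix_\mon$ the external product is not the naive $\bk$-linear one. Each factor carries its own formal variable $\sr'$, $\sr''$, and the differential $(\bep'+\sr'\bet')\boxtimes\id \pm \id\boxtimes(\bep''+\sr''\bet'')$ squares to $\sr'\xi'\cdot\id + \sr''\xi''\cdot\id$, which is $\sr\xi\cdot\id$ only after one identifies $\sr' = \sr'' = \sr$ and uses $\xi = \xi' + \xi''$. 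In other words what you need is an external tensor product over $R^\vee$ together with a compatibility of $\cJ$ with it, reflecting that the loop-rotation $\Gm$ on $\Aff$ sits diagonally inside $\Gm' \times \Gm''$. That is a real structural statement requiring its own proof, and neither this paper nor the cited sections of~\cite{a:hgps} supply it. Unless you prove that Künneth-type lemma, the inductive step does not go through; by contrast, the paper's triangle-plus-nilpotence argument sidesteps the issue entirely.
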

\begin{proof}
We will prove the statement for $\bj_!\cJ(\cE_\gen)$.  The proof for $\bj_*\cJ(\cE_\gen)$ is similar.  Let $\delta = \Bep + \sr\Bet: \BElt \to \BElt[1]$, and let
\[
\cF = (\BElt, \delta) =
\begin{tikzcd}
\BElt \ar[loop right, distance=30, "{\sst[1]}"{description, pos=0.8}, "\Bep + \sr\Bet"]
\end{tikzcd}
\in \Dmix_\mon(\Aff/T,\bk).
\]
(By Lemma~\ref{lem:epsilon2}, we have $\delta^2 = \sr\xi\cdot \id$, so this is indeed a well-defined object of $\Dmix_\mon(\Aff/T,\bk)$.)  Consider the morphism $\sr \cdot \id: \cF\la -2\ra \to \cF$.  By construction, the cone of this map is the object $\cG$ given by
\[
\cG =
\begin{tikzcd}[row sep=large]
\BElt \ar[loop right, distance=30, "{\sst[1]}"{description, pos=0.8}, "\Bep + \sr\Bet"] \\
\BElt\la -2\ra[1] \ar[loop right, distance=30, "{\sst[1]}"{description, pos=0.8}, "-\Bep - \sr\Bet"]
  \ar[u, "{\sst[1]}" description, "\sr"' pos=0.2]
\end{tikzcd}.
\]

On the other hand, consider $\bj_!\cE_\gen \in \Dmix(\Aff/T,\bk)$.  Using Proposition~\ref{prop:eq-std} and the definition of $\Mon$, we find that $\Mon(\bj_!\cE_\gen)$ is given by
\[
\Mon(\bj_!\cE_\gen) =
\begin{tikzcd}[row sep=large]
\BElt \ar[loop right, distance=30, "{\sst[1]}"{description, pos=0.8}, "\Bep"]
  \ar[d, shift right=2, "{\sst[1]}" description, "\xi"' pos=0.8] \\
\BElt\la -2\ra[1] \ar[loop right, distance=30, "{\sst[1]}"{description, pos=0.8}, "-\Bep"]
  \ar[u, shift right=2, "{\sst[1]}" description, "\sr"' pos=0.2]
\end{tikzcd}.
\]
Define a chain map $f: \cG \to \Mon(\bj_!\cE_\gen)$ by the diagram
\[
\begin{tikzcd}[column sep=large, row sep=large]
\BElt \ar[loop left, distance=30, "{\sst[1]}"{description, pos=0.8}, "\Bep + \sr\Bet"]
  \ar[r, "\id"] \ar[dr, "\Bet"]  &
\BElt \ar[loop right, distance=30, "{\sst[1]}"{description, pos=0.8}, "\Bep"]
  \ar[d, shift right=2, "{\sst[1]}" description, "\xi"' pos=0.8] \\
\BElt\la -2\ra[1] \ar[loop left, distance=30, "{\sst[1]}"{description, pos=0.8}, "-\Bep - \sr\Bet"]
  \ar[u, "{\sst[1]}" description, "\sr"' pos=0.2]
  \ar[r, "\id"] &
\BElt\la -2\ra[1] \ar[loop right, distance=30, "{\sst[1]}"{description, pos=0.8}, "-\Bep"]
  \ar[u, shift right=2, "{\sst[1]}" description, "\sr"' pos=0.2]
\end{tikzcd}.
\]
To check that this really is a chain map, we must show that $f$ commutes with the differentials: in other words, we must verify that
\[
\begin{bsm}
\id \\
\Bet & \id
\end{bsm}
\begin{bsm}
\Bep + \sr\Bet & \sr \\
& -\Bep - \sr\Bet
\end{bsm}
=
\begin{bsm}
\Bep & \sr \\
\xi & -\Bep 
\end{bsm}
\begin{bsm}
\id \\
\Bet & \id
\end{bsm}.
\]
This holds by Lemma~\ref{lem:epsilon2}.

Next, define $\bar f: \Mon(\bj_!\cE_\gen) \to \cG$ by the diagram
\[
\begin{tikzcd}[column sep=large, row sep=large]
\BElt \ar[loop left, distance=30, "{\sst[1]}"{description, pos=0.8}, "\Bep"]
  \ar[d, shift right=2, "{\sst[1]}" description, "\xi"' pos=0.8] 
  \ar[r, "\id"] \ar[dr, "-\Bet"]  &
\BElt \ar[loop right, distance=30, "{\sst[1]}"{description, pos=0.8}, "\Bep + \sr\Bet"] \\
\BElt\la -2\ra[1] \ar[loop left, distance=30, "{\sst[1]}"{description, pos=0.8}, "-\Bep"]
  \ar[u, shift right=2, "{\sst[1]}" description, "\sr"' pos=0.2]
  \ar[r, "\id"] &
\BElt\la -2\ra[1] \ar[loop right, distance=30, "{\sst[1]}"{description, pos=0.8}, "-\Bep - \sr\Bet"]
  \ar[u, "{\sst[1]}" description, "\sr"' pos=0.2]
\end{tikzcd}.
\]
A similar calculation shows that $\bar f$ is again a chain map.  Moreover, $f \circ \bar f$ and $\bar f \circ f$ are both identity maps.

In other words, $f$ and $\bar f$ show us that $\cG \cong \Mon(\bj_!\cE_\gen)$ in $\Dmix_\mon(\Aff/T,\bk)$. From the definition of $\cG$, we see that there is a distinguished triangle
\[
\cF\la -2\ra \xrightarrow{\sr} \cF \to \Mon(\bj_!\cE_\gen)  \to.
\]
Now apply $\bi^*$ to this triangle.  Since $\Mon$ commutes with the recollement structure (see~\cite[\proprecollecommute]{a:hgps}), we have $\bi^*\Mon(\bj_!\cE_\gen) \cong \Mon(\bi^*\bj_!\cE_\gen) = 0$.  It follows that $\sr: \bi^*\cF\la -2\ra \to \bi^*\cF$ is an isomorphism in $\Dmix_\mon(\Affs_0/T,\bk)$.  On the other hand, since $\Dmix_\mon(\Affs_0/T,\bk) \cong \Dmix(\Affs_0/T,\bk)$ (see~\eqref{eqn:mon-equiv} or~\cite[\thmrfreemon]{a:hgps}), it is also a nilpotent map, as in~\cite[\rmkhomnilp]{a:hgps}.  Since $\sr$ is both nilpotent and an isomorphism, we must have
\begin{equation}\label{eqn:mon-j1}
\bi^*\cF = 0.
\end{equation}
It is clear by construction that 
\begin{equation}\label{eqn:mon-j2}
\bj^*\cF \cong \cJ(\cE_\gen).
\end{equation}
The two conditions~\eqref{eqn:mon-j1} and~\eqref{eqn:mon-j2} uniquely characterize $\bj_!\cJ(\cE_\gen)$, so we are done.
\end{proof}

\begin{prop}\label{prop:mon-cone}
The object $\bi_*\bi^*\bj_*\cJ(\cE_\gen) \in \Dmix_\mon(\Aff/T,\bk)$ is given by
\[
\begin{tikzcd}[row sep=large]
\BErt \ar[loop right, distance=30, "{\sst[1]}"{description, pos=0.8}, "\Bet + \sr\Bep"]
\\
\BElt[1] \ar[loop right, distance=30, "{\sst[1]}"{description, pos=0.8}, "-\Bep - \sr\Bet"]
  \ar[u, "{\sst[1]}" description, "\Brho" pos=0.2]
\end{tikzcd}
\]
\end{prop}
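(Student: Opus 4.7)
My plan is to realize $\bi_*\bi^*\bj_*\cJ(\cE_\gen)$ as the mapping cone of the canonical adjunction morphism $\bj_!\cJ(\cE_\gen) \to \bj_*\cJ(\cE_\gen)$, using the recollement distinguished triangle
\[
\bj_!\cJ(\cE_\gen) \to \bj_*\cJ(\cE_\gen) \to \bi_*\bi^*\bj_*\cJ(\cE_\gen) \to
\]
(here we have used $\bj^*\bj_* \cong \id$ to rewrite $\bj_!\bj^*\bj_*\cJ(\cE_\gen)$ as $\bj_!\cJ(\cE_\gen)$). Proposition~\ref{prop:mon-std} already supplies concrete models $(\BElt, \Bep + \sr\Bet)$ and $(\BErt, \Bet + \sr\Bep)$ for the two leftmost terms, so the remaining task is to write down the adjunction morphism explicitly between these two models and compute its cone.

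The natural candidate for the adjunction morphism is $\Brho: \BElt \to \BErt$ introduced in Section~\ref{ss:dsum2}. The chain map check reduces, via Lemma~\ref{lem:Brho}, to the single line
\[
\Brho(\Bep + \sr\Bet) = \sr\Bep\Brho + \Bet\Brho = (\Bet + \sr\Bep)\Brho.
\]
To confirm that $\Brho$ really represents the adjunction morphism (and not just some arbitrary chain map between the stated models), I would restrict to $\Affs_\gen$: every summand $\cE(I)$ with $I \subsetneq [n]$ is supported on the closed stratum, so $\bj^*\BElt$ and $\bj^*\BErt$ both collapse to the $\bEp_n$ factor, and $\bj^*\Brho$ becomes the identity on $\cJ(\cE_\gen)$. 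Since the $(\bj_!, \bj^*)$-adjunction gives
\[
\Hom(\bj_!\cJ(\cE_\gen),\, \bj_*\cJ(\cE_\gen)) \cong \Hom(\cJ(\cE_\gen),\, \cJ(\cE_\gen))
\]
via $\phi \mapsto \bj^*\phi$, the morphism $\Brho$ and the adjunction morphism both correspond to $\id$, and hence coincide.

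It then remains only to write down the mapping cone of $\Brho$ in $\Dmix_\mon(\Aff/T,\bk)$. By the standard cone construction, the underlying graded parity sheaf is $\BErt \oplus \BElt[1]$, with diagonal differential blocks $\Bet + \sr\Bep$ on $\BErt$ and $-(\Bep + \sr\Bet)$ on the shifted copy of $\BElt$, together with the off-diagonal block $\Brho$. That the resulting total differential satisfies $\delta^2 = \sr\xi \cdot \id$ follows mechanically from Lemma~\ref{lem:epsilon2} and the chain-map condition. This is exactly the object displayed in the proposition.

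The main obstacle is the identification of $\Brho$ with the adjunction morphism rather than with some other chain map between the two models; that step is pinned down by the $\bj^*$-restriction argument in the second paragraph. Everything else—the chain-map verification and the cone formula—is routine matrix manipulation using the identities already collected in Section~\ref{ss:dsum2}.
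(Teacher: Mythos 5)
Your proof is correct and follows essentially the same route as the paper's: verify via Lemma~\ref{lem:Brho} that $\Brho$ is a chain map, identify it with the canonical adjunction map by restricting to $\Affs_\gen$ (where it reduces to the identity), and then take the cone. The paper phrases the restriction step slightly more tersely, but the argument is the same.
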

\begin{proof}
The proposition is equivalent to the claim that the diagram above depicts the cone of the canonical map $\bj_!\cJ(\cE_\gen) \to \bj_*\cJ(\cE_\gen)$. It follows from Lemma~\ref{lem:Brho} that the matrix $\Brho$ defines a chain map $\phi: \bj_!\cJ(\cE_\gen) \to \bj_*\cJ(\cE_\gen)$.  The object depicted above is evidently the cone of $\phi$. Since $\phi|_{\Affs_\gen}$ is the identity map $\cJ(\cE_\gen) \to \cJ(\cE_\gen)$, $\phi$ must be the canonical map $\bj_!\cJ(\cE_\gen) \to \bj_*\cJ(\cE_\gen)$.
\end{proof}

\section{The nearby cycles sheaf for affine space}
\label{sec:nearby}

Let $\bZ \in \Dmix(\Affs_0/T,\bk)$ be the object given by  
\[
\bZ = 
\begin{tikzcd}
\bEtot \ar[loop right, distance=30, "{\sst[1]}"{description, pos=0.8}, "\Bepr+\Betr-\bxi\bN" pos=0.3]
\end{tikzcd}.
\]
To check that $\bZ$ is a well-defined object of $\Dmix(\Affs_0/T,\bk)$, we must show that
\[
(\Bepr+\Betr-\bxi\bN)^2 + \kappa(\Bepr+\Betr-\bxi\bN) = 0.
\]
To prove this, recall that $\bxi^2 = 0$, and that $\bxi$ supercommutes with all elements of $\Lambda \otimes \uEnd(\bZ)$.  Recall also that $\kappa(\Bepr+\Betr-\bxi\bN) = -\xi\bN$ (see Section~\ref{ss:3derived} or~\cite[\secthreederived]{a:hgps} for the definition).  Using Lemma~\ref{lem:epsilon3}, we find that
\begin{multline*}
(\Bepr+\Betr-\bxi\bN)^2 + \kappa(\Bepr+\Betr-\bxi\bN) \\
= \Bepr^2 + \Betr^2 + \bxi^2\bN^2 + \Bepr\Betr + \bxi\Bepr\bN + \Betr\Bepr + \bxi\Betr\bN - \bxi\bN\Bepr - \bxi \bN\Betr - \xi\bN = 0,
\end{multline*}
as desired. 
\begin{rmk}\label{rmk:unit-okz}
Observe that (as in Remarks~\ref{rmk:unit-ok1} and~\ref{rmk:unit-ok3}) the fact that $\bZ$ is well-defined relies on Proposition~\ref{prop:aff-unit-sum} only for $|I| \le n-2$.
\end{rmk}
We will also need to work with the object $\Mon(\bZ) \in \Dmix_\mon(\Affs_0/T,\bk)$.  Applying the explicit formula from~\cite[\eqnmonconcrete]{a:hgps}, we obtain 
\[
\Mon(\bZ) =
\begin{tikzcd}[row sep=large]
\bEtot \ar[loop right, distance=30, "{\sst[1]}"{description, pos=0.8}, "\Bepr+\Betr"]
  \ar[d, shift right=2, "{\sst[1]}" description, "\xi"' pos=0.8]
\\
\bEtot\la -2\ra[1] \ar[loop right, distance=30, "{\sst[1]}"{description, pos=0.8}, "-\Bepr-\Betr"]
  \ar[u, shift right=2, "{\sst[1]}" description, "\sr-\bN"' pos=0.2]
\end{tikzcd}
\]

\begin{prop}\label{prop:biota}
There is a chain map $\Biota: \bi_*\bi^*\bj_*\cJ(\cE_\gen) \to \Mon(\bZ)\la 1\ra$ given by
\[
\begin{tikzcd}[row sep=large]
\BErt \ar[loop left, distance=30, "{\sst[1]}"{description, pos=0.8}, "\Bet + \sr\Bep"]
  \ar[r, "\Birt"]
  \ar[dr, "\Beprt"] &
\bEtot\la 1\ra \ar[loop right, distance=30, "{\sst[1]}"{description, pos=0.8}, "\Bepr+\Betr"]
  \ar[d, shift right=2, "{\sst[1]}" description, "\xi"' pos=0.8]
\\
\BElt[1] \ar[loop left, distance=30, "{\sst[1]}"{description, pos=0.8}, "-\Bep - \sr\Bet"]
  \ar[u, "{\sst[1]}" description, "\Brho" pos=0.2]
  \ar[r, "\Bilt"] &
\bEtot\la -1\ra[1] \ar[loop right, distance=30, "{\sst[1]}"{description, pos=0.8}, "-\Bepr-\Betr"]
  \ar[u, shift right=2, "{\sst[1]}" description, "\sr-\bN"' pos=0.2]
\end{tikzcd}
\]
\end{prop}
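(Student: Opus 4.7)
The content of the proposition is that a particular triple of arrows $(\Birt, \Bilt, \Beprt)$ defines a chain map between two explicit objects of $\Dmix_\mon(\Aff/T,\bk)$. My plan is to verify the condition directly after re-packaging both objects as $2\times 2$ matrix total complexes, exploiting the bookkeeping established in Section~\ref{sec:direct-sum}.

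Concretely, by Proposition~\ref{prop:mon-cone} the source is the graded parity sheaf $\BErt \oplus \BElt[1]$ equipped with total differential
\[
\delta_{\mathrm{src}} = \begin{bsm} \Bet + \sr\Bep & \Brho \\ 0 & -\Bep - \sr\Bet \end{bsm},
\]
and, applying the formula $\cite[\eqnmonconcrete]{a:hgps}$ for $\Mon$ to $\bZ$ and twisting by $\la 1\ra$, the target is $\bEtot\la 1\ra \oplus \bEtot\la -1\ra[1]$ equipped with
\[
\delta_{\mathrm{tgt}} = \begin{bsm} \Bepr + \Betr & \sr - \bN \\ \xi & -\Bepr - \Betr \end{bsm}.
\]
In these coordinates the candidate map is the matrix
\[
\Biota = \begin{bsm} \Birt & 0 \\ \Beprt & \Bilt \end{bsm}.
\]

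The chain-map condition $\delta_{\mathrm{tgt}}\Biota = \Biota\delta_{\mathrm{src}}$ splits into four entrywise equations. Each of them reduces, after expansion, to one or two of the identities collected in Lemma~\ref{lem:Bunch1}: the $(1,2)$-entry is exactly $\Birt\Brho + \bN\Bilt = \sr\Bilt$; the $(2,1)$-entry, after using $\Beprt\Bep = 0$ and $\Bepr\Beprt = 0$, becomes $\Beprt\Bet + \Betr\Beprt = \xi\Birt$; the $(1,1)$-entry uses $\Bepr\Birt = \bN\Beprt$, $\Birt\Bet = \Betr\Birt$ and $\Birt\Bep = \Beprt$; and the $(2,2)$-entry uses $\sr\Bilt\Bet = \Betr\Bilt$ together with $\Beprt\Brho + \Bepr\Bilt = \Bilt\Bep$. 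In effect, every equation of Lemma~\ref{lem:Bunch1} is used exactly once.

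I expect no substantive difficulty: Section~\ref{sec:direct-sum} was written precisely so that this verification is mechanical. The only real care needed is in reading off signs and tracking which components of the two differentials carry implicit $[1]$-shifts in the matrix form. Note that $\Biota$ is automatically a morphism in $\Dmix_\mon$, not merely of the underlying graded parity sheaves, because all its entries commute with $\sr$: they are built from ordinary graded-parity-sheaf maps together with scalar powers of $\sr$, which are central in $R^\vee$.
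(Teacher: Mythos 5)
Your proof is correct and follows essentially the same route as the paper: both write the chain-map condition $\delta_2\Biota = \Biota\delta_1$ as a $2\times 2$ matrix identity and reduce it entrywise to the relations of Lemma~\ref{lem:Bunch1}. Your explicit accounting of which relation handles each entry is a useful elaboration (and you correctly cite Lemma~\ref{lem:Bunch1}, whereas the paper's proof has a typo referring to Lemma~\ref{lem:bunch1}).
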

\begin{proof}
Let $\delta_1$ be the differential of $\bi_*\bi^*\bj_*\cJ(\cE_\gen)$, and let $\delta_2$ be the differential of $\Mon(\bZ)\la 1 \ra$.  Regarding both these objects as direct sums of two terms as depicted above, these differentials and the map $\Biota$ are given by
\[
\delta_1 =
\begin{bsm}
\Bet + \sr\Bep & \Brho \\
& -\Bep -\sr\Bet
\end{bsm},
\qquad
\delta_2 = 
\begin{bsm}
\Bepr + \Betr & \sr - \bN \\
\xi & -\Bepr - \Betr
\end{bsm},
\qquad
\Biota =
\begin{bsm}
\Birt  \\
\Beprt & \Bilt
\end{bsm}.
\]
We must show that $\Biota\delta_1 = \delta_2\Biota$, or in other words, that
\[
\begin{bsm}
\Birt\Bet + \sr \Birt\Bep &\ & \Birt \Brho \\
 \Beprt\Bet +\sr \Beprt\Bep  && \Beprt\Brho -\Bilt\Bep - \sr \Bilt\Bet
\end{bsm}
=
\begin{bsm}
\Bepr \Birt + \Betr \Birt + \sr\Beprt - \bN \Beprt &\ & \sr \Bilt - \bN \Bilt \\
\xi \Birt - \Bepr \Beprt - \Betr \Beprt && - \Bepr \Bilt - \Betr \Bilt
\end{bsm}.
\]
This follows from Lemma~\ref{lem:bunch1}.
\end{proof}

\begin{prop}\label{prop:bp}
There is a chain map $\Bp: \Mon(\bZ)\la 1\ra \to \bi_*\bi^*\bj_*\cJ(\cE_\gen)$ given by
\[
\begin{tikzcd}[row sep=large]
\bEtot\la 1\ra \ar[loop left, distance=30, "{\sst[1]}"{description, pos=0.8}, "\Bepr+\Betr"]
  \ar[d, shift right=2, "{\sst[1]}" description, "\xi"' pos=0.8]
  \ar[r, "\Bprt"]
  \ar[dr, "-\Betlt"] &
\BErt \ar[loop right, distance=30, "{\sst[1]}"{description, pos=0.8}, "\Bet + \sr\Bep"]
\\
\bEtot\la -1\ra[1] \ar[loop left, distance=30, "{\sst[1]}"{description, pos=0.8}, "-\Bepr-\Betr"]
  \ar[u, shift right=2, "{\sst[1]}" description, "\sr-\bN"' pos=0.2]
  \ar[r, "\Bplt"] &
\BElt[1] \ar[loop right, distance=30, "{\sst[1]}"{description, pos=0.8}, "-\Bep - \sr\Bet"]
  \ar[u, "{\sst[1]}" description, "\Brho" pos=0.2]
\end{tikzcd}
\]
\end{prop}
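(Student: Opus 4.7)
The plan is to carry out a direct matrix computation, mimicking the proof of Proposition~\ref{prop:biota}. Decomposing the source $\Mon(\bZ)\la 1\ra$ as $\bEtot\la 1\ra \oplus \bEtot\la -1\ra[1]$ and the target $\bi_*\bi^*\bj_*\cJ(\cE_\gen)$ as $\BErt \oplus \BElt[1]$, the diagram in the statement encodes the map
\[
\Bp = \begin{bsm} \Bprt & 0 \\ -\Betlt & \Bplt \end{bsm},
\]
whose $(2,1)$ entry records the diagonal arrow $-\Betlt$. The differentials $\delta_1$ on $\bi_*\bi^*\bj_*\cJ(\cE_\gen)$ and $\delta_2$ on $\Mon(\bZ)\la 1\ra$ were already recorded in the proof of Proposition~\ref{prop:biota}, so no bookkeeping is required to set them up again.

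The main step is then to verify the chain-map identity $\Bp\delta_2 = \delta_1\Bp$ entrywise. Multiplying out, this amounts to the four equations
\[
(\Bet + \sr\Bep)\Bprt - \Brho\Betlt = \Bprt(\Bepr + \Betr), \quad \Brho\Bplt = \Bprt(\sr - \bN),
\]
\[
(\Bep + \sr\Bet)\Betlt = -\Betlt(\Bepr + \Betr) + \xi\Bplt, \quad -(\Bep + \sr\Bet)\Bplt = -\Betlt(\sr - \bN) - \Bplt(\Bepr + \Betr).
\]
Each reduces to identities collected in Lemma~\ref{lem:Bunch2}: the first uses $\Bprt\Bepr = \sr\Bep\Bprt$ together with $\Bprt\Betr + \Brho\Betlt = \Bet\Bprt$; the second is exactly $\Bprt\bN + \Brho\Bplt = \sr\Bprt$; the third uses $\Bet\Betlt = 0$, $\Betlt\Betr = 0$, and $\Betlt\Bepr + \Bep\Betlt = \xi\Bplt$; and the fourth uses $\Betlt = \Bet\Bplt$, $\Bplt\Bepr = \Bep\Bplt$, and $\Betlt\bN = \Bplt\Betr$.

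Since the entire verification is mechanical once Lemma~\ref{lem:Bunch2} is in hand, no step presents a genuine obstacle; the only thing that has to be checked carefully is the sign on the diagonal entry $-\Betlt$, which is exactly what makes the $(2,1)$ entry of $\Bp\delta_2 - \delta_1\Bp$ vanish via $\Betlt\Bepr + \Bep\Betlt = \xi\Bplt$. This exactly mirrors the proof of Proposition~\ref{prop:biota}, where the companion map $\Biota$ was checked against Lemma~\ref{lem:Bunch1}.
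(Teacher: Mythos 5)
Your proof is correct and takes essentially the same approach as the paper: both identify $\Bp = \begin{bsm}\Bprt & 0\\ -\Betlt & \Bplt\end{bsm}$, reduce the chain-map condition $\Bp\delta_2 = \delta_1\Bp$ to four entrywise equalities, and deduce them from Lemma~\ref{lem:Bunch2}; you simply make explicit which identities from that lemma are used for each entry.

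One small caveat about your closing remark: the sign on $-\Betlt$ is not only what rescues the $(2,1)$ entry. It is equally essential in the $(1,1)$ entry (where $\Bet\Bprt = \Bprt\Betr + \Brho\Betlt$ must cancel against $-\Brho\Betlt$, not add to it) and in the $(2,2)$ entry (where $\Betlt\bN - \Bplt\Betr = 0$ requires the matching sign). The computation you wrote is right in every entry; only the prose emphasis is slightly misleading.
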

\begin{proof}
Let $\delta_1$ and $\delta_2$ be as in the proof of Proposition~\ref{prop:biota}.  We must show that $\Bp\delta_2 = \delta_1\Bp$, where
\[
\Bp =
\begin{bsm}
\Bprt \\
-\Betlt & \Bplt
\end{bsm}.
\]
In other words, we must show that
\[
\begin{bsm}
\Bprt\Bepr + \Bprt\Betr &\ & \sr\Bprt - \Bprt\bN \\
-\Betlt\Bepr - \Betlt\Betr + \xi \Bplt && -\sr\Betlt + \Betlt\bN - \Bplt\Bepr - \Bplt\Betr
\end{bsm}
=
\begin{bsm}
\Bet \Bprt + \sr \Bep\Bprt - \Brho \Betlt &\ & \Brho \Bplt \\
\Bep \Betlt +\sr\Bet\Betlt && -\Bep \Bplt -\sr \Bet \Bplt
\end{bsm}.
\]
This follows from Lemma~\ref{lem:bunch2}.
\end{proof}

\begin{lem}\label{lem:monodromy-calc}
There is a null-homotopic chain map $\Mon(\bZ) \to \Mon(\bZ)\la 2\ra$ given by
\[
\begin{bsm}
\sr - \bN & \\ & \sr - \bN
\end{bsm}
: \Mon(\bZ) \to \Mon(\bZ)\la 2\ra.
\]
\end{lem}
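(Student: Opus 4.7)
The plan is to verify both claims by direct block-matrix computation, using the decomposition $\Mon(\bZ) = \bEtot \oplus \bEtot\la -2\ra[1]$ displayed just before the statement. In that decomposition the differential of $\Mon(\bZ)$ is
\[
\delta = \begin{bsm} \Bepr + \Betr & \sr - \bN \\ \xi & -\Bepr - \Betr \end{bsm},
\]
and the endomorphism under consideration is $M = \operatorname{diag}(\sr - \bN,\, \sr - \bN)$. The chain-map condition $M\delta = \delta M$ reduces to the assertion that $\sr - \bN$ commutes with every entry of $\delta$: the indeterminate $\sr$ is central in the $R^\vee$-action on $\uHom$, while $\bN$ commutes with $\Bepr$ and $\Betr$ by Lemma~\ref{lem:epsilon3}(2) and with $\xi \cdot \id$ trivially.

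For the null-homotopy I would propose the candidate
\[
h = \begin{bsm} 0 & 0 \\ \id & 0 \end{bsm} \colon \Mon(\bZ) \to \Mon(\bZ)\la 2\ra[-1],
\]
whose single nonzero entry is the identity of $\bEtot$, placed in the position going from the top summand of $\Mon(\bZ)$ into the bottom summand of $\Mon(\bZ)\la 2\ra[-1] = \bEtot\la 2\ra[-1] \oplus \bEtot$. The bidegrees align because the bottom summand of $\Mon(\bZ)$ is $\bEtot\la -2\ra[1]$. A one-line block-matrix multiplication then gives
\[
\delta h + h\delta = \begin{bsm} \sr - \bN & 0 \\ 0 & \sr - \bN \end{bsm} = M,
\]
yielding the desired null-homotopy.

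The conceptual point is that the off-diagonal entry $\sr - \bN$ already present in $\delta$ is itself the witness to the null-homotopy. I anticipate no substantive obstacle: the whole argument reduces to two short matrix computations whose only nontrivial algebraic input is the commutation relations assembled in Lemma~\ref{lem:epsilon3}; the only care required is the bookkeeping of bidegrees in the definition of $h$.
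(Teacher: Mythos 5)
Your proof is correct and follows exactly the same route as the paper's: same block decomposition of $\Mon(\bZ)$, same verification of the chain-map condition via Lemma~\ref{lem:epsilon3}, and the identical homotopy $h = \begin{bsm} 0 & 0 \\ \id & 0 \end{bsm}$ with the same one-line computation $\delta h + h\delta = \operatorname{diag}(\sr-\bN,\sr-\bN)$.
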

\begin{proof}
We continue to let $\delta_2$ be the differential of $\Mon(\bZ)$, as in the proof of Proposition~\ref{prop:biota}.  It is easy to check that $\begin{bsm}
\sr - \bN & \\ & \sr - \bN
\end{bsm}$ commutes with $\delta_2$, so it is a chain map.  Next, consider the map $h = \begin{bsm} 0 & 0 \\ \id & 0 \\ \end{bsm}$, shown as a dotted line below.
\[
\begin{tikzcd}[row sep=large]
\bEtot \ar[loop left, distance=30, "{\sst[1]}"{description, pos=0.8}, "\Bepr+\Betr"]
  \ar[d, shift right=2, "{\sst[1]}" description, "\xi"' pos=0.8]
  \ar[r, "\sr - \bN"]
  \ar[dr, dashed, "h = \id"] &
\bEtot\la 2\ra \ar[loop right, distance=30, "{\sst[1]}"{description, pos=0.8}, "\Bepr+\Betr"]
  \ar[d, shift right=2, "{\sst[1]}" description, "\xi"' pos=0.8]
\\
\bEtot\la -2\ra[1] \ar[loop left, distance=30, "{\sst[1]}"{description, pos=0.8}, "-\Bepr-\Betr"]
  \ar[u, shift right=2, "{\sst[1]}" description, "\sr-\bN"' pos=0.2]
  \ar[r, "\sr - \bN"] &
\bEtot[1] \ar[loop right, distance=30, "{\sst[1]}"{description, pos=0.8}, "-\Bepr-\Betr"]
  \ar[u, shift right=2, "{\sst[1]}" description, "\sr-\bN"' pos=0.2]
\end{tikzcd}
\]
We then have
\[
\delta_2 h + h \delta_2 = 
\begin{bsm}
\Bepr + \Betr & \sr - \bN \\
\xi & -\Bepr - \Betr
\end{bsm}
\begin{bsm} 0\vphantom{\Betr} & 0 \\ \id & 0 \\ \end{bsm}
+
\begin{bsm} 0\vphantom{\Betr} & 0 \\ \id & 0 \\ \end{bsm}
\begin{bsm}
\Bepr + \Betr & \sr - \bN \\
\xi & -\Bepr - \Betr
\end{bsm}
=
\begin{bsm}
\sr - \bN & 0 \\
0 & \sr - \bN
\end{bsm},
\]
so our chain map is null-homotopic, as claimed.
\end{proof}

\begin{thm}\label{thm:nearby}
On $\Aff$, we have $\Psi_f(\cE_\gen) \cong \bZ\la -1\ra$.  In particular, $\Psi_f(\cE_\gen)$ is a perverse sheaf.  The monodromy endomorphism is given by the map  $\bN: \bZ \to \bZ\la 2\ra$.
\end{thm}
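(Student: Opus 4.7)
The plan is to establish an isomorphism $\bi^*\bj_*\cJ(\cE_\gen) \cong \Mon(\bZ)\la 1\ra$ in $\Dmix_\mon(\Affs_0/T,\bk)$, from which the formula $\Psi_f(\cE_\gen) \cong \bZ\la -1\ra$ follows by applying $\Mon^{-1}$ and shifting by $\la -2\ra$, per the definition of $\Psi_f$ recalled in Section~\ref{ss:nearby-defn}. Candidate mutually inverse chain maps $\Biota$ and $\Bp$ between (the $\bi_*$-images of) these two objects have already been written down in Propositions~\ref{prop:biota} and~\ref{prop:bp}, using the concrete cone description of $\bi_*\bi^*\bj_*\cJ(\cE_\gen)$ from Proposition~\ref{prop:mon-cone}. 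So the core task is to verify that $\Bp$ and $\Biota$ are inverse to each other in the homotopy category of differential complexes of graded parity sheaves.

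The main obstacle is producing explicit nullhomotopies for $\Bp \circ \Biota - \id$ and $\Biota \circ \Bp - \id$. I would carry this out by unfolding both compositions in the $2\times 2$ block form used in Propositions~\ref{prop:biota} and~\ref{prop:bp}; the diagonal entries then involve products like $\Bprt\Birt$ and $\Bilt\Bplt$, while the off-diagonal entries mix $\Brho$, $\Beprt$, $\Betlt$, and the monodromy generator $\xi$. Using Lemmas~\ref{lem:Bunch1}, \ref{lem:Bunch2}, and~\ref{lem:Brho}, each block should simplify either to the identity or to a term of the form $\delta h + h\delta$, where $h$ is a block matrix whose nonzero entries encode the telescoping built into the definition of $\bN$ (in the same spirit as the explicit $h = \id$ used in Lemma~\ref{lem:monodromy-calc}). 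Keeping signs and the powers of $\sr$ straight across blocks is where the combinatorial bookkeeping is densest.

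Once the isomorphism is in place, both remaining assertions are essentially free. Perversity: the underlying graded parity sheaf of $\bZ\la -1\ra$ is a direct sum of Tate twists of the perverse parity sheaves $\cE(I)$, hence is concentrated in perverse degree zero. Monodromy: by definition $\Nilp_\Psi$ on $\Psi_f(\cE_\gen)$ corresponds under $\Mon$ to the canonical action of $\sr \cdot \id$ on $\bi^*\bj_*\cJ(\cE_\gen)$; transporting this through our isomorphism with $\Mon(\bZ)\la 1\ra$ and invoking Lemma~\ref{lem:monodromy-calc}, which shows that $\sr - \bN$ is nullhomotopic on $\Mon(\bZ)$, shows that $\sr \cdot \id$ and $\bN$ agree in $\Dmix_\mon$. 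Applying $\Mon^{-1}$ turns this into the identification $\Nilp_\Psi = \bN$ on $\bZ\la -1\ra$, completing the proof.
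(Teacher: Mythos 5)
Your proposal follows essentially the same route as the paper's proof: identify $\Mon(\Psi_f(\cE_\gen))\la 2\ra$ with the cone description from Proposition~\ref{prop:mon-cone}, use $\Biota$ and $\Bp$ from Propositions~\ref{prop:biota} and~\ref{prop:bp}, exhibit explicit block-matrix nullhomotopies for $\id - \Bp\Biota$ and $\Biota\Bp - \id$, and then deduce perversity from the shape of $\bEtot$ and the monodromy identification from Lemma~\ref{lem:monodromy-calc}. The paper carries out the homotopies you sketch by introducing auxiliary maps $\ql$, $\qr$, $g$, $h$, $\bh$ and verifying equations~\eqref{eqn:homotopy1}--\eqref{eqn:homotopy2}, which is exactly the ``telescoping'' bookkeeping you anticipate.
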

\begin{proof}
For the first assertion, since $\Mon$ is fully faithful, it is enough to show that $\Mon(\Psi_f(\cE_\gen))\la 2\ra \cong \Mon(\bZ)\la 1\ra$.  The object $\Mon(\Psi_f(\cE_\gen))\la 2\ra \cong \bi_*\bi^*\bj_*\cJ(\cE_\gen)$ has been described in Proposition~\ref{prop:mon-cone}, and in Propositions~\ref{prop:biota} and~\ref{prop:bp}, we have constructed two maps
\[
\begin{tikzcd}
\Mon(\Psi_f(\cE_\gen))\la 2\ra \ar[r, shift left, "\Biota"] &
  \Mon(\bZ)\la 1\ra \ar[l, shift left, "\Bp"]
\end{tikzcd}.
\]
It remains to show that $\Biota$ and $\Bp$ are isomorphisms in $\Dmix_\mon(\Aff/T,\bk)$.

Let $\ql: \BElt \to \BElt$, $\qr: \BErt \to \BErt$, and $g: \BErt \to \BElt$ all be defined by the matrix
\[
\begin{bsm}
0 \\
& \ddots \\
& & 0 \\
& & & \id
\end{bsm}.
\]
In other words, all three of these maps can be thought of as ``projection onto the summand $\bEp_n$.'' It is straightforward to check the following equalities:
\begin{align*}
\ql &= \id - \Bplt\Bilt &
g \Brho &= \ql &
g \Bep &= 0 \\
\qr &= \id - \Bprt\Birt &
\Brho g &= \qr &
\Bet g &= 0
\end{align*}

As in Proposition~\ref{prop:biota}, let $\delta_1$ be the differential of $\bi_*\bi^*\bj_*\cJ(\cE_\gen)$, and let $\delta_2$ be the differential of $\Mon(\bZ)\la 1\ra$.  We claim that
\begin{equation}\label{eqn:homotopy1}
\delta_1 
\begin{bsm}
0 & 0\\
g & 0
\end{bsm}
+
\begin{bsm}
0 & 0\\
g & 0
\end{bsm}
\delta_1 = \id - \Bp\Biota.
\end{equation}
Indeed, the left-hand side is given by
\[
\begin{bsm}
\Bet + \sr\Bep & \Brho \\
0 & -\Bep -\sr\Bet
\end{bsm}
\begin{bsm}
0 & 0\\
g & 0
\end{bsm}
+
\begin{bsm}
0 & 0\\
g & 0
\end{bsm}
\begin{bsm}
\Bet + \sr\Bep & \Brho \\
0 & -\Bep -\sr\Bet
\end{bsm}
=
\begin{bsm}
\Brho g & 0 \\
g\Bet + \sr g \Bep - \Bep g - \sr \Bet g & g \Brho
\end{bsm}
=
\begin{bsm}
\qr & 0 \\
g \Bet - \Bep g & \ql
\end{bsm},
\]
while the right-hand side is given by
\[
\id - \Bp\Biota = 
\begin{bsm}
\id - \Bprt\Birt & 0 \\
\Betlt\Birt - \Bplt\Beprt & \id - \Bplt\Bilt
\end{bsm}
=
\begin{bsm}
\qr& 0 \\
\Betlt\Birt - \Bplt\Beprt & \ql
\end{bsm}.
\]
To finish the proof of~\eqref{eqn:homotopy1}, one must show that $g \Bet - \Bep g = \Betlt\Birt - \Bplt\Beprt$.  This is again a routine matrix calculation.

Next, let $h: \bEr_i\la 1\ra \to \bEr_i\la -1\ra$ be the map given by
\[
h = 
\begin{bsm}
0  \\
\id & 0 \\
\sr & \id & 0 \\
\sr^2 & \sr & \id & 0 \\
\vdots & & & & \ddots \\
\sr^{n-i-3} & & & \cdots & \id & 0 \\
\sr^{n-i-2} & \sr^{n-i-3} & & \cdots & \sr & \id & 0
\end{bsm}.
\]
Easy matrix calculations show that 
\begin{align*}
\sr h - Nh &= \irt\prt - \id &
h\bepr - \bepr h &= \eprt\prt 
\\
\sr h - hN &= \ilt\plt -\id &
h\betr - \betr h &= -\ilt\etalt
\end{align*}
Next, let $\bh: \bEtot\la1\ra \to \bEtot\la -1\ra$ be given by
\[
\bh = 
\begin{bsm}
h \\
& h \\
& & \ddots \\
& & & h
\end{bsm}.
\]
Using the calculations above, one can show that
\begin{align*}
\sr \bh - \bN\bh &= \Birt\Bprt - \id &
h\Bepr - \Bepr \bh &= \Beprt\Bprt 
\\
\sr \bh - \bh\bN &= \Bilt\Bplt -\id &
h\Betr - \Betr \bh &= -\Bilt\Betlt
\end{align*}

We claim that
\begin{equation}\label{eqn:homotopy2}
\delta_2
\begin{bsm}
0 &0 \\
\bh & 0
\end{bsm}
+ 
\begin{bsm}
0 & 0\\
\bh & 0
\end{bsm}
\delta_2 = \Biota\Bp - \id
\end{equation}
The left-hand side is given by
\[
\begin{bsm}
\Bepr + \Betr & \sr - \bN \\
\xi & -\Bepr - \Betr
\end{bsm}
\begin{bsm}
0 & 0\\
\bh & 0
\end{bsm}
+
\begin{bsm}
0 &0\\
\bh & 0
\end{bsm}
\begin{bsm}
\Bepr + \Betr & \sr - \bN \\
\xi & -\Bepr - \Betr
\end{bsm}
=
\begin{bsm}
\sr\bh - \bN\bh & 0 \\
\bh\Bepr + \bh \Betr -\Bepr\bh - \Betr\bh & \sr\bh - \bh \bN
\end{bsm},
\]
while the right-hand side is given by
\[
\Biota\Bp - \id  = 
\begin{bsm}
\Birt\Bprt - \id & 0 \\
\Beprt\Bprt - \Bilt\Betlt & \Bilt\Bplt - \id
\end{bsm}.
\]
The equality~\eqref{eqn:homotopy2} then follows from the calculations above.

The two equations~\eqref{eqn:homotopy1} and~\eqref{eqn:homotopy2} show that $\Bp\Biota$ and $\Biota\Bp$ are both chain-homotopic to identity maps in $\Dmix_\mon(\Aff/T,\bk)$.  In other words, $\Bp$ and $\Biota$ are isomorphisms in $\Dmix_\mon(\Aff/T,\bk)$, as desired.

Since the underlying graded parity sheaf $\bEtot$ of $\bZ$ is a direct sum of objects of the form $\cE(I)\la k\ra$, each of which is perverse, we conclude that $\bZ$ itself is a mixed perverse sheaf.

Finally, it remains to describe the monodromy endomorphism $\Nilp_\Psi: \bZ \to \bZ\la 2\ra$.  By definition (see~\cite[\defnmonodromy]{a:hgps}), the map $\Mon(\Nilp_\Psi): \Mon(\bZ) \to \Mon(\bZ)\la 2\ra$ is given by $\sr \cdot \id$.  By Lemma~\ref{lem:monodromy-calc}, this map is homotopic (i.e., equal in $\Dmix_\mon(\Aff/T,\bk)$) to $\Mon(\bN)$.  We conclude that $\Nilp_\Psi = \bN$.
\end{proof}

\section{Background on the affine flag variety}
\label{sec:affflag}

This section contains notation and preliminaries related to the affine Weyl group and the affine flag variety of $\PGL_n$.

\subsection{The extended affine Weyl group}
\label{ss:notation}

Let $W$ be the Weyl group of $\PGL_n$, identified with the symmetric group on $\{1,\ldots, n\}$.  It is generated by the simple reflections $s_1, \ldots, s_{n-1}$, where $s_i$ is the permutation of $[n] = \{1,2,\ldots,n\}$ that exchanges $i$ and $i+1$.

Let $\bY$ be the coweight lattice of $\PGL_n$.  We identify $\bY$ explicitly as
\[
\bY = \Z^n / \Z\cdot (1,\ldots, 1).
\]
Let $\Phiv \subset \bY$ be the set of coroots, given by $\Phiv = \{e_i - e_j | \text{$i, j\in [n]$, $i\neq j$}\}$, where $\{e_i\}_{i\in [n]}$ denotes the standard basis in $\Z^n$.  The coroot lattice $\Z\Phiv \subset \bY$ is then identified with the image of the set $\{(y_1, \ldots, y_n) \in \Z^n \mid \sum y_i \equiv 0 \pmod n\}$. Let
\[
\Waff = W \ltimes \Z\Phiv
\qquad\text{and}\qquad
\Wext = W \ltimes \bY
\]
be the affine Weyl group and the extended affine Weyl group, respectively. For $\lambda \in \bY$, we write $t_\lambda$ for the corresponding element of $\Wext$.  Let $\alphav_0 = (1,0,\ldots,0,-1)$ be the highest coroot, and let $s_{\alpha_0} \in W$ be the reflection with respect to this root (i.e., the permutation that exchanges $1$ and $n$).  Recall that $\Waff$ is a Coxeter group; it is generated by $W$ together with the affine simple reflection
\[
s_n = s_{\alpha_0}t_{-\alpha_0}.
\]
(The affine simple reflection is usually denoted by $s_0$, but we will use $s_n$ because it will allow for uniformity of notation with the setting of Section~\ref{sec:affine-pre}.)

The group $\Wext$ is not a Coxeter group, but it nevertheless makes sense to speak of the lengths of elements in $\Wext$, following~\cite{im:sbd}.  Let
\[
\omega =  s_1 s_2 \cdots s_{n-1} t_{(0,\ldots,0,1)}.
\]
This is an element of length $0$ of order $n$.  In fact, the set $\{1, \omega, \omega^2, \ldots, \omega^{n-1}\}$ is the set of all elements of length~$0$.  We adopt the convention that a \emph{reduced expression} for $w \in \Wext$ is an expression of the form
\[
w = s_{i_1} \cdots s_{i_k} \omega^m,
\]
where $0 \le m < n$, and where $s_{i_1} \cdots s_{i_k}$ is a reduced expression in $\Waff$.  A \emph{subexpression} of this expression is word obtained by omitting some of the simple reflections (but without changing the power of $\omega$).

\subsection{The affine flag variety}
\label{ss:affflag}

Let $B \subset \PGL_n(\C((t)))$ be the usual (``upper-triangular'') Iwahori subgroup, and let $\Fl = \PGL_n(\C((t)))/B$ be the affine flag variety for $\PGL_n$.  It is well known that the $B$-orbits on $\Fl$ are parametrized by $\Wext$.  For $w \in \Wext$, let $\Fl_w$ denote the corresponding $B$-orbit. 

\begin{lem}\label{lem:bs-bij}
Let $w \in \Wext$, and let $w= s_{i_1} \cdots s_{i_k} \omega^m$ be a reduced expression.  Assume that $k < n$, and that no two of the simple reflections $s_{i_1}, \ldots, s_{i_k}$ are equal.  Then the Bott--Samelson resolution
\[
\pi:
P_{s_{i_1}} \times^B P_{s_{i_2}} \times^B \cdots P_{s_{i_k}} \times^B \Fl_{\omega^m} \to \overline{\Fl_w}
\]
is a bijection.
\end{lem}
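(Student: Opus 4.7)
The plan is to exhibit $\pi$ as a bijection by matching natural affine stratifications on source and target, which turn out to be indexed by the same Boolean poset of subsets of $\{1,\ldots,k\}$.

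First I would set up the stratifications. Since $\omega^m$ has length zero, $\Fl_{\omega^m}$ is a single point, so the Bott--Samelson variety is an iterated $\mathbb{P}^1$-bundle of dimension $k$, with a standard decomposition into $2^k$ affine cells $C_J \cong \mathbb{A}^{|J|}$ indexed by subsets $J \subseteq \{1,\ldots,k\}$. The map $\pi$ sends $C_J$ into the Schubert cell $\Fl_{v_J\omega^m}$, where $v_J$ denotes the ordered product $\prod_{j \in J} s_{i_j}$. Since both varieties have dimension $k$, the proof reduces to showing (a) $J \mapsto v_J$ is a bijection from subsets of $\{1,\ldots,k\}$ onto the Bruhat interval $[e, s_{i_1}\cdots s_{i_k}]$, and (b) each $\pi|_{C_J}: C_J \to \Fl_{v_J\omega^m}$ is bijective.

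The combinatorial step (a) is where the hypothesis $k < n$ enters crucially. Removing any single node from the affine Dynkin diagram of type $\tilde A_{n-1}$ yields the finite type $A_{n-1}$ diagram, so the distinct reflections $s_{i_1},\ldots,s_{i_k}$ lie in a finite parabolic $W'' \subseteq \Waff$ isomorphic to $S_n$; inside $W''$ they generate a standard parabolic $W'$ of type a product of finite type $A$'s. In any such group, a product of distinct simple reflections is a Coxeter element of the subparabolic it generates and has length equal to the number of generators used (Coxeter elements in finite type $A_m$ are $(m+1)$-cycles, of length $m$), so every subword $v_J$ is reduced of length $|J|$. Simply-lacedness then implies that the \emph{set} of simple reflections appearing in any reduced expression is an invariant of the element, since commutation and braid relations both preserve this set; hence the support of $v_J$ equals $\{s_{i_j} : j \in J\}$. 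Combined with the distinctness of the $s_{i_j}$'s, this forces $v_J = v_{J'} \Rightarrow J = J'$. Surjectivity of $J \mapsto v_J$ onto $[e, s_{i_1}\cdots s_{i_k}]$ is the subword property of Bruhat order, so this interval is Boolean of cardinality $2^k$, matching the cells of the Bott--Samelson.

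For step (b), the restriction $\pi|_{C_J}$ is a morphism of $|J|$-dimensional affine spaces whose description in unipotent coordinates coincides with the standard parametrization of the Schubert cell $\Fl_{v_J\omega^m}$ by the positive roots inverted by $v_J$; reducedness of the subword ensures these roots are pairwise distinct, so this parametrization is an isomorphism. The main obstacle is the combinatorial step (a); once the Boolean structure of $[e, s_{i_1}\cdots s_{i_k}]$ is in hand, the cell-by-cell bijectivity follows, and $\pi$ is a bijection on underlying sets.
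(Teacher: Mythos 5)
Your proposal is correct, but it takes a genuinely different route from the paper. The paper's proof sketch stays entirely on the algebraic side: it observes that $(T_{s_{i_1}}+1)\cdots(T_{s_{i_k}}+1) = \sum_u T_u$ in the affine Hecke algebra and invokes the fact that the coefficient of $T_u$ in this product records the cohomology of the Bott--Samelson fiber over $\Fl_u$ (a consequence of the decomposition theorem and the standard point-counting interpretation); since the coefficients are all $1$, each fiber has one-dimensional total cohomology and hence is a single point. Your approach is more directly geometric: you stratify both source and target into affine cells indexed by subsets $J$, match them up via $J \mapsto v_J$, and then check cell-by-cell bijectivity via the unipotent/root-group parametrization of Bruhat cells. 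Both arguments hinge on the same combinatorial input --- that in a simply-laced affine group, a product of $k < n$ pairwise-distinct simple reflections is reduced and that distinct subsets give distinct elements --- but the paper's proof sketch simply asserts that ``every subexpression is reduced,'' whereas you supply the argument (pass to the finite parabolic obtained by deleting a node from the cycle $\tilde A_{n-1}$, observe that the subwords are Coxeter elements of sub-parabolics, and use invariance of support under braid and commutation moves in simply-laced type). That is a real gain of your write-up. What the Hecke-algebra route buys in exchange is that it sidesteps your step (b): once the coefficient is $1$ the fiber is automatically a point, whereas you need to separately verify (as you gesture at with the positive-root parametrization) that each restricted map $\pi|_{C_J}: C_J \to \Fl_{v_J\omega^m}$ is itself a bijection, not merely a dominant map of affine spaces of the same dimension. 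That step is standard and correct, but it is an additional verification that the paper's proof does not require. Both proofs work; the paper's is shorter once the Hecke-algebra machinery is available, while yours is more elementary and makes the combinatorial heart of the argument explicit.
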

We will later see that this map is actually an isomorphism of varieties.
\begin{proof}[Proof sketch]
We may reduce to the case where $m = 0$, so that we are working in $\Waff$ instead of $\Wext$. The Bott--Samelson map is always surjective; we just need to prove that it is injective.  Consider the standard basis $\{T_w : w \in \Waff\}$ for the affine Hecke algebra.  Under our assumptions, every subexpression of $s_{i_1} \cdots s_{i_k}$ is reduced.  This observation implies that
\[
(T_{s_{i_1}} + 1)(T_{s_{i_2}} + 1) \cdots (T_{s_{i_k}} + 1) = \sum_{\substack{\text{$u$ a subexpression} \\ \text{of $s_{i_1} s_{i_2} \cdots s_{i_k}$}}} T_u.
\]
It is well known that for any $u \le w$, the coefficient of $T_u$ on the right-hand side above encodes the cohomology of the fiber of the Bott--Samelson resolution over any point of $\Fl_u$.  Since these coefficients are all $1$, the fibers are single points
\end{proof}

Let $\Gm$ act on $\C((t))$ by scaling the indeterminate $t$ (the ``loop rotation action'').  Then one can form the semidirect product $\Gm \ltimes B$, and this group acts on $\Fl$.  It is well known that the orbits of $\Gm \ltimes B$ on $\Fl$ are the same as those of $B$. Let $\Parity_{\Gm \ltimes B}(\Fl,\bk)$ be the category of $\Gm \ltimes B$-equivariant parity sheaves on $\Fl$.  For any $w \in \Wext$, there is a unique indecomposable object $\cE_w \in \Parity_{\Gm \ltimes B}(\Fl)$ that is supported on $\overline{\Fl_w}$ and that satisfies
\[
\cE_w|_{\Fl_w} \cong \ubk_{\Fl_w} \{\dim \Fl_w\}.
\]
In particular, for $w = \omega^m$, the variety $\Fl_{\omega^m} = \overline{\Fl_{\omega^m}}$ is a single point, and $\cE_{\omega^m}$ is a skyscraper sheaf.

The category $\Parity_{\Gm \ltimes B}(\Fl,\bk)$ is equipped with a monoidal structure given by the convolution product, denoted by $\star$.  The skyscraper sheaf at the identity element $\cE_1$ is the unit for this product.  For any $w \in \Wext$, if $w = s_{i_1} \cdots s_{i_k}\omega^m$ is a reduced expression, then $\cE_w$ is a direct summand (with multiplicity~$1$) of
\[
\cE_{s_{i_1}} \star \cdots \star \cE_{s_{i_k}} \star \cE_{\omega^m}.
\]
Moreover, this convolution product is canonically (see~\cite[\S10.2]{rw:tmpcb}) isomorphic to $\pi_*\ubk\{\dim \Fl_w\}$, where $\pi$ is the Bott--Samelson resolution of $\overline{\Fl_w}$ corresponding to the given reduced expression. 

For any simple reflection $s$, there are counit and unit maps between $\cE_s$ and shifts of $\cE_1$.  Following~\cite{ew:sc}, we denote these maps by
\[
\blackupper_s: \cE_s \to \cE_1\{1\}
\qquad\text{and}\qquad
\blacklower^s: \cE_1\{-1\} \to \cE_s,
\]
respectively.

\subsection{The first fundamental coweight and admissible elements}
\label{ss:ffw}

Let $\ffw = \ffw^{(1)} = (1,0,\ldots,0) \in \bY$ be the first fundamental coweight.  Its orbit under $W$ consists of the coweights
\[
\ffw^{(i)} = (0,\ldots,0,1,0,\ldots,0) \quad \text{($1$ in the $i$th coordinate)},
\qquad 1 \le i \le n.
\]
For brevity, we will denote the corresponding elements of $\Wext$ by
\[
\ut^i = t_{\ffw^{(i)}} \qquad\text{for $1 \le i \le n$.}
\]
One can check that
\begin{equation}\label{eqn:extreme-fl}
\begin{aligned}
\ut^1 &= s_n s_{n-1} \cdots s_3 s_2 \omega \\
\ut^2 &= s_1 s_n \cdots s_4 s_3 \omega \\
& \vdots \\
\ut^i &= s_{i-1} s_{i-2} \cdots s_{i+2} s_{i+1} \omega \\
& \vdots \\
\ut^n &= s_{n-1} s_{n-2} \cdots s_2 s_1 \omega
\end{aligned}
\end{equation}
(In the expression for $\ut^i$, the subscripts on the simple reflections are to be understood modulo $n$.)  In fact, these are reduced expressions for $\ut^1, \ldots, \ut^n$.

\begin{defn}
An element $w \in \Wext$ is said to be \emph{$\ffw$-admissible} if there is some $i \in \{1,\ldots, n\}$ such that $w \le \ut^i$ in the Bruhat order.
\end{defn}

We will classify the $\ffw$-admissible elements using the following notion.

Let $I \subsetneq [n]$ be a proper subset.  List its elements in some order as
\[
i_1, i_2, \ldots, i_k.
\]
This order is called an \emph{acceptable order} on $I$ if the following conditions hold:
\begin{enumerate}
\item If $i_1 < n$, then $i_1 +1 \notin I$.  If $i_1 = n$, then $1 \notin I$.
\item Exactly one of the following inequalities is false:
\[
i_1 > i_2 > \cdots > i_k > i_1.
\]
\end{enumerate}
Here is a more intuitive description of this notion.  Consider the affine Dynkin diagram of type $\tilde{A}_{n-1}$:
\begin{equation}\label{eqn:circle}
\begin{tikzcd}
&& n \ar[drr, dash] \\
1 \ar[urr, dash] \ar[r, dash] & 2 \ar[r, dash] & \cdots \ar[r, dash]
  & n-2 \ar[r, dash] & n-1
\end{tikzcd}
\end{equation}
An acceptable order on $I$ is an order obtained by listing its elements in clockwise order, starting from an element whose immediate counterclockwise neighbor does not belong to $I$.  

\begin{lem}\label{lem:adm-wext}
Let $I \subsetneq [n]$ be a proper subset.
\begin{enumerate}
\item Choose an acceptable order $i_1, i_2, \ldots, i_k$ on $I$, and let\label{it:adm-indep}
\begin{equation}\label{eqn:adm-indep}
w_I = s_{i_1}s_{i_2} \cdots s_{i_k}\omega \in \Wext.
\end{equation}
This element is independent of the choice of acceptable order.  
\item The assignment $I \mapsto w_I$ gives a bijection\label{it:adm-class}
\begin{equation}\label{eqn:adm-class}
\{\text{proper subsets of $[n]$}\} \overset{\sim}{\leftrightarrow}
\{\text{$\ffw$-admissible elements of $\Wext$}\}.
\end{equation}
\end{enumerate}
\end{lem}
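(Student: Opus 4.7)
The plan is to exploit the cyclic geometry of the affine Dynkin diagram \eqref{eqn:circle}. For part~\eqref{it:adm-indep}, I will argue that the elements of $I \subsetneq [n]$ partition into \emph{clusters} of consecutive vertices in \eqref{eqn:circle} separated by gaps, and that the acceptability condition forces $i_1$ to be the clockwise endpoint of some cluster. The acceptable orders on $I$ then correspond to the clusters: picking a starting cluster, one lists its elements in decreasing order, then the elements of the next cluster counterclockwise, and so on. Different acceptable orders therefore differ by cyclically rotating the ``blocks'' of simple reflections coming from distinct clusters. Since $s_i$ and $s_j$ commute whenever $\{i,j\}$ is not an edge of \eqref{eqn:circle}, and in particular whenever $i, j$ lie in different clusters, the product $s_{i_1}s_{i_2}\cdots s_{i_k}$ is invariant under such rotations, establishing part~\eqref{it:adm-indep}.

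For part~\eqref{it:adm-class}, I will prove surjectivity via the subword characterization of the Bruhat order. Any $w \le \ut^i$ admits a reduced expression that is a subword of the reduced expression $\ut^i = s_{i-1}s_{i-2}\cdots s_{i+1}\omega$ from \eqref{eqn:extreme-fl}. Because this expression uses $n-1$ distinct simple reflections, the Hecke-algebra identity in the proof sketch of Lemma~\ref{lem:bs-bij} shows that \emph{every} subword of it is already reduced; so $w = s_{j_1}\cdots s_{j_l}\omega$ where $(j_1, \ldots, j_l)$ is the subsequence of $(i-1, i-2, \ldots, i+1)$ supported on some $I \subseteq [n] \smallsetminus \{i\}$. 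A short case analysis confirms that $(j_1, \ldots, j_l)$ is an acceptable order on $I$: the starting index $j_1$ is the first element of $I$ encountered while scanning from $i-1$, so its clockwise neighbor is either $i$ or a previously skipped index (hence not in $I$); and exactly one of the inequalities $j_1 > j_2 > \cdots > j_l > j_1$ fails because $(i-1, \ldots, i+1)$ cyclically enumerates $[n]\smallsetminus\{i\}$ in decreasing order. Thus $w = w_I$. For injectivity, I will invoke the invariance of \emph{support}: by Matsumoto's theorem applied to the $\Waff$-component of $w_I$ (extracted via the projection $\Wext \twoheadrightarrow \Wext/\Waff \cong \Z/n\Z$), the set $\{s_i : i \in I\}$ of simple reflections appearing in the reduced expression \eqref{eqn:adm-indep} for $w_I$ is an invariant of $w_I$, so $I$ is determined by $w_I$.

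The main delicate point throughout is the reducedness of the subexpressions in question --- both that the expression \eqref{eqn:adm-indep} for $w_I$ is itself reduced (so that the support argument applies) and that every subword of $\ut^i$ in the surjectivity argument is reduced. I expect this to be handled uniformly by the observation that $\ut^i$ has a reduced expression in $n-1 < n$ distinct simple reflections, triggering the hypothesis of Lemma~\ref{lem:bs-bij} and forcing every subword to be reduced. Beyond that, the proof is combinatorial bookkeeping on \eqref{eqn:circle}.
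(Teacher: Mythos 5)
Your proof is correct and takes essentially the same route as the paper: part (1) via the cluster decomposition and the commutativity of simple reflections in distinct clusters, surjectivity via the subword characterization of Bruhat order together with the reducedness of subwords of a reduced word in distinct simple reflections, and injectivity via support invariance (which the paper phrases equivalently as the observation that only commutation braid moves can be applied to a reduced word with no repeated letters).
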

\begin{proof}
\eqref{it:adm-indep}~Suppose we have another acceptable order on $I$.  This order must be of the form $i_{t+1}, i_{t+2}, \ldots, i_k, i_1, i_2, \ldots, i_t$, for some $t$ with $1 \le t < k$ and with $i_{t+1} + 1 \notin I$ (or $1 \notin I$, in the case where $i_{t+1} = n$).  Our assumptions imply that every simple reflection in the set $\{s_{i_1}, \ldots, s_{i_t}\}$ commutes with every simple reflection in the set $\{s_{i_{t+1}}, \ldots, s_{i_k}\}$.  It follows that $w_I$ is independent of the choice of acceptable order.

\eqref{it:adm-class}~Every $\ffw$-admissible element is given by a subexpression of some expression in~\eqref{eqn:extreme-fl}.  It is clear that any such subexpression is of the form considered in~\eqref{eqn:adm-indep}, so the map~\eqref{eqn:adm-class} is surjective.

Next, let $I, I' \subsetneq [n]$.  Choose acceptable orders for both $I$ and $I'$.  The corresponding expressions for $w_I$ and $w_{I'}$ from~\eqref{eqn:adm-indep} are reduced.  If $w_I = w_{I'}$, then (by Matsumoto's theorem) the reduced expression for $w_I$ can be changed into that for $w_{I'}$ by applying a sequence of braid relations.  But since there are no repeated simple reflections in~\eqref{eqn:adm-indep}, the only possible braid relations that can be applied are those of the form $s_is_j = s_j s_i$, where $i$ and $j$ are not neighbors in~\eqref{eqn:circle}.  This implies that $I = I'$, so the map~\eqref{eqn:adm-class} is injective.
\end{proof}

In view of Lemma~\ref{lem:adm-wext}, we introduce the notation
\[
\Fl_I = \Fl_{w_I} \qquad\text{for any $I \subsetneq [n]$.}
\]

\section{Parity sheaves on the global Schubert variety}
\label{sec:schub-pre}

Following~\cite{pz:lmsv, zhu:ccpr}, one can associate to any reductive group $G$ and any dominant coweight $\lambda$ a space $\ocGr_\lambda$, called a ``global Schubert variety.'' This variety is equipped with a map to $\bA^1$.  Its generic fiber is a subvariety of the affine Grassmannian of $G$, and its special fiber is a subvariety of the affine flag variety of $G$.

In this section, we study the geometry of this variety in the special case where the group is $G = \PGL_n$ and the coweight is $\lambda = \ffw$.

\subsection{The global Schubert variety \texorpdfstring{$\ocGr_\ffw$}{Grpi1}}
\label{ss:glob}

We begin by giving a concrete description of the variety $\ocGr_\ffw$.  This description comes from~\cite[\S4.1]{goe:fcmsv} (where it is called the ``standard local model'').  For a discussion of how the two settings are related, see~\cite[\S7.2.1]{pz:lmsv} and~\cite{prs:lmsv1}.

For $y \in \bA^1$ and $i \in [n]$, let $g_i(y): \C^n \to \C^n$ be the linear map given by
\[
g_i(y) = \left[\begin{smallmatrix}
1 \\
& \ddots \\
& & 1 \\
& & & y \\
& & & & 1 \\
& & & & & \ddots \\
& & & & & & 1
\end{smallmatrix}
\right],
\]
where the ``$y$'' appears as the $i$th entry on the diagonal.  Let
\begin{multline*}
\ocGr_\ffw = \{ (y, L_1, \ldots, L_n) \in \bA^1 \times \bP^{n-1} \times \cdots \times \bP^{n-1} \mid{} \\
\text{$g_1(y)(L_1) \subset L_2$, \ldots, $g_{n-1}(y)(L_{n-1}) \subset L_n$, $g_n(y)(L_n) \subset L_1$} \}.
\end{multline*}
Projection onto the first coordinate gives us a map
\[
f: \ocGr_\ffw \to \bA^1.
\]
Of course, if $y \ne 0$, then $g_i(y)$ is invertible, so the lines $L_2, \ldots, L_n$ are all determined by $L_1$ alone.  We deduce that
\[
f^{-1}(y) \cong \bP^{n-1} \cong \Gr_\ffw \qquad \text{if $y \ne 0$.}
\]
On the other hand, the special fiber $f^{-1}(0)$ can be embedded as a closed subvariety of the affine flag variety $\Fl$.  See~\cite[\S4.2]{goe:fcmsv} for an explicit description of this embedding.  Via this embedding, according to~\cite[Theorem~3]{zhu:ccpr}, we have
\[
f^{-1}(0) = \bigcup_{\substack{w \in \Wext \\ \text{$w$ is $\ffw$-admissible}}} \Fl_w.
\]

Let $T \subset \PGL_n$ be the maximal torus consisting of diagonal matrices.  The natural action of $T$ on $\bP^{n-1}$ commutes with each of the $g_i(y)$'s, so there is an induced action of $T$ on $\ocGr_\ffw$ given by
\[
t \cdot (y, L_1,\ldots,L_n) = (y, tL_1, tL_2, \ldots, tL_n).
\]
Next, we define an action of $\Gm$ on $\ocGr_\ffw$ by letting $z \in \Gm$ act by
\[
z \cdot (y, L_1, \ldots, L_n) = (zy, L_1, g_1(z)L_2, g_1(z)g_2(z)L_3, \ldots, g_1(z)\cdots g_{n-1}(z)L_n).
\]
The actions of $T$ and $\Gm$ commute, so there is an action of $\tT = T \times \Gm$ on $\ocGr_\ffw$.  We define characters
$\alpha_1, \ldots, \alpha_{n-1}, \xi: \tT \to \Gm$ by
\[
\alpha_i\left(\left[
\begin{smallmatrix}
y_1 \\
& \ddots \\
&& y_n
\end{smallmatrix}
\right], z\right) = y_{i+1} y_i^{-1},
\qquad
\xi\left(\left[
\begin{smallmatrix}
y_1 \\
& \ddots \\
&& y_n
\end{smallmatrix}
\right], z\right) = z.
\]
These formulas agree with those in Section~\ref{sec:affine-pre} after making the change in coordinates $t_i = y_{i+1}y_i^{-1}$. Note that the restrictions of $\alpha_1, \ldots, \alpha_n$ to $T \subset \tT$ are the negatives of the usual simple roots of $\PGL_n$.  Alternatively, they are the simple roots with respect to which the upper-triangular Iwahori subgroup $B$ may be thought of as ``negative.''  We also let
\[
\alpha_n = \xi - \alpha_1 - \cdots - \alpha_{n-1}:
\left(\left[
\begin{smallmatrix}
y_1 \\
& \ddots \\
&& y_n
\end{smallmatrix}
\right], z\right) \mapsto
zy_1 y_n^{-1}.
\]

\subsection{An open affine subset of \texorpdfstring{$\ocGr_\ffw$}{Grpi1}}
\label{ss:chart}

Suppose $1 \le j,k \le n$ Define a map $p_{j,k}: \Aff \to \bA^1$ by
\[
p_{j,k}(x_1, \ldots, x_n) =
\begin{cases}
1 & \text{if $k-j = -1$ or $k -j = n-1$} \\
x_jx_{j+1} \cdots x_{k} & \text{if $j \le k$ and $k -j <n-1$,} \\
x_j x_{j+1} \cdots x_n x_1 x_2 \cdots x_{k} & \text{if $k < j$ and $k -j < -1$.}
\end{cases}
\]
In other words $p_{j,k}$ is the product of variables starting at $x_j$ and proceeding counterclockwise around~\eqref{eqn:circle} up to $x_k$, except when this rule would give us the product of all the variables, in which case we instead take the empty product.  Next, let $u_k: \Aff \to \bP^{n-1}$ be the map given by
\[
u_k = [ p_{k,n} : p_{k,1} : p_{k,2}: \cdots : p_{k,n-1}],
\]
and then let $u: \Aff \to \ocGr_\ffw$ be the map given by
\[
u(x_1, \ldots, x_n) = (x_1x_2 \cdots x_n, u_1, u_2, \cdots, u_n).
\]
For example, when $n = 4$, $u$ is given by
\begin{multline*}
u(x_1,x_2,x_3,x_4) = (x_1x_2x_3x_4, [ 1: x_1 : x_1x_2: x_1x_2x_3], [x_2x_3x_4 : 1 : x_2 : x_2x_3], \\
[x_3x_4 : x_3x_4x_1 : 1 : x_3 ], [x_4: x_4x_1: x_4x_1x_2: 1 ]).
\end{multline*}
It is straightforward to check that $u$ does indeed take values in $\ocGr_\ffw$, i.e., that 
\[
g_i(x_1\cdots x_n) \cdot u_i \subset u_{i+1}.
\]
Moreover, if we let $\tT$ act on $\Aff$ by
\[
t \cdot (x_1, \ldots, x_n) = (\alpha_1(t)x_1, \ldots, \alpha_n(t)x_n),
\]
then the map $u$ is $\tT$-equivariant.

\begin{lem}\label{lem:chart}
The map $u: \Aff \to \ocGr_\ffw$ is an open embedding.  Its image meets every $B$-orbit in $f^{-1}(0)$. Indeed, we have
\[
u^{-1}((\ocGr_\ffw)_\gen) = \Affs_\gen
\qquad\text{and}\qquad
u^{-1}(\Fl_I) = \Affs_I \quad \text{for $I \subsetneq [n]$.}
\]
\end{lem}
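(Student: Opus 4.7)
The plan is to establish that $u$ is an open embedding by constructing an explicit inverse on an open subset of $\ocGr_\ffw$, and then to compute the stratum preimages using $\tT$-equivariance together with a combinatorial identification.

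For the open embedding, let $U \subset \ocGr_\ffw$ be the open subset of tuples $(y, L_1, \ldots, L_n)$ such that the $k$-th homogeneous coordinate of $L_k$ is nonzero for every $k \in [n]$. Since the $k$-th coordinate of $u_k(x_1, \ldots, x_n)$ is the empty product $p_{k, k-1} = 1$, $u$ factors through $U$. To construct an inverse $v: U \to \Aff$, normalize each $L_k \in U$ so that its $k$-th coordinate is $1$, writing $L_k = [v_{k,1}: \cdots : v_{k,n}]$. Comparing $k$-th coordinates in the incidence condition $g_k(y) L_k \subset L_{k+1}$ (cyclic indices) shows that the proportionality factor is $v_{k, k+1}$, yielding the recursions $v_{k, k+1} \cdot v_{k+1, j} = v_{k, j}$ for $j \ne k$, and $v_{k, k+1} \cdot v_{k+1, k} = y$. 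Setting $x_k := v_{k, k+1}$ and iterating gives $v_{k, k+m} = x_k \cdots x_{k+m-1}$ (cyclic indices) and $y = x_1 \cdots x_n$, matching the definition of $u$. Hence $v(y, L_1, \ldots, L_n) := (v_{1,2}, \ldots, v_{n,1})$ is a two-sided inverse to $u \colon \Aff \simto U$.

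The preimage of the generic part is then immediate from $f \circ u(x_1, \ldots, x_n) = x_1 \cdots x_n$. For the remaining strata, observe that each $\Affs_I$ is a single $\tT$-orbit (since $\alpha_1, \ldots, \alpha_n$ form an alternative basis of $X^*(\tT)$), and $u$ is $\tT$-equivariant; since loop rotation preserves each $B$-orbit of $\Fl$, every $\tT$-orbit of $f^{-1}(0)$ is contained in a single $B$-orbit, so $u(\Affs_I) \subset \Fl_{J(I)}$ for some function $J \colon \{I \subsetneq [n]\} \to \{J \subsetneq [n]\}$. Since the $\Affs_I$'s partition $\Aff \smallsetminus \Affs_\gen$ and the $\Fl_J$'s partition $f^{-1}(0)$, it suffices to verify $J(I) = I$. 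To do so, evaluate $u$ at the representative $x^I$ with $x^I_i = 1$ for $i \in I$ and $x^I_i = 0$ otherwise. A direct inspection of the formulas for $p_{k,j}(x^I)$ shows that the nonvanishing entries of each $u_k(x^I)$ are exactly those at positions obtainable by running consecutively around~\eqref{eqn:circle} through elements of $I$. Translating this data into a lattice chain via the embedding of $f^{-1}(0)$ into $\Fl$ described in~\cite[\S4.2]{goe:fcmsv} and matching the resulting pattern with the acceptable ordering encoded in~\eqref{eqn:adm-indep} identifies $u(x^I)$ as a point of $\Fl_{w_I}$, giving $J(I) = I$. Nonemptiness of each $\Affs_I$ then shows that the image of $u$ meets every $B$-orbit in $f^{-1}(0)$, by Lemma~\ref{lem:adm-wext}.

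The main obstacle is the combinatorial step at the end: one must carefully translate the projective-coordinate data $u(x^I)$ via Goertz's lattice-chain embedding and verify that the vanishing pattern of the coordinates matches the reduced expression for $w_I$ produced by the acceptable ordering of $I$. The open embedding in the first step is routine once the inverse is written down explicitly, and the $\tT$-equivariance reduction in the second step is formal; it is the lattice-chain identification that carries the real content of the lemma.
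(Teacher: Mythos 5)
Your construction of the inverse $v$ on the open set $U$ (those tuples whose $k$-th line has nonvanishing $k$-th coordinate) is exactly the paper's: same open set, same formula for $v$, and the ``elementary calculations'' the paper alludes to are the incidence recursions you write out. That part matches.

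Where you diverge is the stratum identification. The paper simply cites~\cite[Proposition~4.5(ii)]{goe:fcmsv} for the claim that $U$ meets every $B$-orbit in $f^{-1}(0)$, and leaves the precise labeling $u^{-1}(\Fl_I)=\Affs_I$ implicit in that reference; you instead attempt a self-contained argument. Your reduction is a nice idea: each $\Affs_I$ is a single $\tT$-orbit, $u$ is $\tT$-equivariant, and $\tT$-orbits on $f^{-1}(0)\subset\Fl$ sit inside $B$-orbits, so $u$ induces a map $I\mapsto J(I)$ on stratum labels which one must show is the identity. But the conclusion that $\tT$-orbits lie inside $B$-orbits quietly relies on the compatibility, under G\"ortz's embedding $f^{-1}(0)\hookrightarrow\Fl$, of (i) the $T$-action on $\ocGr_\ffw$ with the torus $T\subset B$ acting on $\Fl$, and (ii) the $\Gm$-action on $\ocGr_\ffw$ (given by the $g_1(z)\cdots g_{k-1}(z)$ twist) with loop rotation on $\Fl$. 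Neither compatibility is established in the paper or in your argument; point (ii) in particular is not obvious from the definitions and would need to be checked or cited.

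The larger gap is the one you flag yourself: verifying $J(I)=I$. You evaluate $u$ at the point $x^I$ and assert that translating the resulting projective coordinates into a lattice chain and comparing with the reduced expression $w_I = s_{i_1}\cdots s_{i_k}\omega$ from an acceptable order ``identifies $u(x^I)$ as a point of $\Fl_{w_I}$.'' This is precisely the content of the lemma, and as written it is a restatement rather than a proof: no lattice chain is produced, no Iwahori coset is computed, and no comparison with the Bruhat cell is carried out. The paper sidesteps this by citing G\"ortz; your route is legitimate but the combinatorial step is the whole point, and it is not done. Until that computation (or a citation subsuming it) is supplied, the proposal has a genuine gap at exactly the place the paper outsources to~\cite{goe:fcmsv}.
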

\begin{proof}[Proof sketch]
Given a point $(y, L_1, \ldots, L_n) \in \ocGr_\ffw$, write each line $L_i$ in homogeneous coordinates as $L_i = [a_{i1} : a_{i2} : \cdots : a_{in} ]$.  Let $U \subset \ocGr_\ffw$ be the open subset consisting of points $(y, L_1, \ldots, L_n)$ such that $a_{ii} \ne 0$ for all $i \in [n]$.  It is easy to see from the formula that $u: \Aff \to \ocGr_\ffw$ actually takes values in $U$.  On the other hand, there is a map $v: U \to \Aff$ given by 
\[
v(y, L_1, \ldots, L_n) = \textstyle \left(
\frac{a_{12}}{a_{11}}, \frac{a_{23}}{a_{22}}, \ldots,
\frac{a_{n-1,n}}{a_{n-1,n-1}}, \frac{a_{n1}}{a_{nn}}\right).
\]
Elementary calculations show that $u$ and $v$ are inverse to one another.

For the claim that $U$ meets every $B$-orbit in $f^{-1}(0)$, see~\cite[Proposition~4.5(ii)]{goe:fcmsv}.
\end{proof}

\begin{cor}\label{cor:smooth}
The variety $\ocGr_\ffw$ is smooth. For any $I \subsetneq [n]$, the Schubert variety $\overline{\Fl_{w_I}}$ is smooth.
\end{cor}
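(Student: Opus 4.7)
The plan is to use the open chart $u: \Aff \to \ocGr_\ffw$ from Lemma~\ref{lem:chart}, combined with the $B$-action on $\ocGr_\ffw$, to cover each of the two varieties by smooth opens.

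For the first assertion, $u(\Aff) \cong \Aff$ is already a smooth open subset of $\ocGr_\ffw$, and the generic part $f^{-1}(\bA^1 \smallsetminus \{0\})$ is isomorphic to $(\bA^1 \smallsetminus \{0\}) \times \bP^{n-1}$ via $(y, L_1, \ldots, L_n) \mapsto (y, L_1)$, since the matrices $g_i(y)$ are invertible when $y \ne 0$. This handles every point $p$ with $f(p) \ne 0$. For $p \in f^{-1}(0)$, the second assertion of Lemma~\ref{lem:chart} says that $u(\Aff)$ meets every $B$-orbit in $f^{-1}(0)$, so there exists $b \in B$ with $b \cdot p \in u(\Aff)$, and consequently $b^{-1} \cdot u(\Aff)$ is a smooth open neighborhood of $p$.

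For the second assertion, since $B$ preserves each orbit $\Fl_v$ and hence each closure $\overline{\Fl_{w_I}}$, the same $B$-translate argument reduces the problem to showing that $u(\Aff) \cap \overline{\Fl_{w_I}}$ is smooth. The key step is the identification
\[
u^{-1}(\overline{\Fl_{w_I}}) = \overline{\Affs_I} = \bigsqcup_{J \subset I} \Affs_J,
\]
which, combined with the last part of Lemma~\ref{lem:chart}, reduces to the Bruhat-order statement $\{v \in \Wext : v \le w_I\} = \{w_J : J \subset I\}$. Using an acceptable-order reduced expression $w_I = s_{i_1} \cdots s_{i_k}\omega$ from Lemma~\ref{lem:adm-wext}, in which the simple reflections are all distinct, every subexpression is automatically reduced, and the subword characterization of Bruhat order (as in the proof of Lemma~\ref{lem:bs-bij}) then shows that the elements $\le w_I$ are exactly the $w_J$ for $J \subset I$. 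Granting this, $u(\Aff) \cap \overline{\Fl_{w_I}} \cong \overline{\Affs_I}$ is an affine space of dimension $|I|$ and hence smooth, and $B$-translates cover $\overline{\Fl_{w_I}}$. The hardest step conceptually is this Bruhat-order identification; everything else is a routine application of the chart $u$ together with the $B$-action.
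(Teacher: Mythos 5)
Your proof takes essentially the same route as the paper's: both exploit the smooth open chart $u(\Aff)$, the fact from Lemma~\ref{lem:chart} that it meets every $B$-orbit in $f^{-1}(0)$, and $B$-equivariance to propagate smoothness. The paper argues that the singular locus is a closed $B$-stable subset contained in $f^{-1}(0)$, hence empty since it would otherwise have to meet the smooth chart; you argue that the $B$-translates of the chart form a smooth open cover. These are the same argument read in opposite directions. Your explicit Bruhat-order verification that $u^{-1}(\overline{\Fl_{w_I}}) = \overline{\Affs_I}$---using the subword characterization on an acceptable-order reduced expression, where all simple reflections are distinct so every subword is reduced and corresponds to some $w_J$, $J \subset I$---is a useful unpacking of the paper's terse ``the same reasoning shows\ldots'' for the Schubert-variety half. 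One point worth being conscious of, which applies equally to the paper's argument: for the first assertion you need the $B$-action on $f^{-1}(0) \subset \Fl$ to extend compatibly to (at least local) automorphisms of the total space $\ocGr_\ffw$, so that $b^{-1}\cdot u(\Aff)$ really is a smooth open subset of $\ocGr_\ffw$ and not merely of $f^{-1}(0)$. This is standard in the local model setting (it comes from the action of the relevant group scheme over $\bA^1$, cf.\ the references to G\"ortz and Zhu), but it is being invoked implicitly both by you and by the paper; in contrast, for the Schubert-variety assertion one works entirely inside $\Fl$, where the $B$-action is unambiguous.
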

\begin{proof}
The singular locus of $\ocGr_\ffw$ must be contained in the special fiber $f^{-1}(0)$, and it must be stable under the Iwahori subgroup $B$.  Since the open set $U$ from the proof of Lemma~\ref{lem:chart} meets every $B$-orbit in $f^{-1}(0)$, it must meet the singular locus if the latter is nonempty.  But $U$ is smooth, so the singular locus is empty.  The same reasoning shows that Schubert varieties associated to $\ffw$-admissible elements of $\Wext$ are smooth.
\end{proof}

\begin{cor}\label{cor:bs-isom}
Let $I \subsetneq [n]$, and let $i_1, \ldots, i_k$ be an acceptable order on $I$.  The Bott--Samelson resolution
\[
\pi:
P_{s_{i_1}} \times^B P_{s_{i_2}} \times^B \cdots P_{s_{i_k}} \times^B \Fl_{\omega} \to \overline{\Fl_{w_I}}
\]
is an isomorphism of varieties.
\end{cor}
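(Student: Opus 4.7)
The plan is to deduce Corollary~\ref{cor:bs-isom} from the set-theoretic bijection of Lemma~\ref{lem:bs-bij} together with the smoothness established in Corollary~\ref{cor:smooth}, via a standard application of Zariski's Main Theorem. First, I would check that the hypotheses of Lemma~\ref{lem:bs-bij} apply: in the reduced expression $w_I = s_{i_1}\cdots s_{i_k}\omega$ from~\eqref{eqn:adm-indep}, the length $k = |I|$ is at most $n-1 < n$, and the indices $i_1,\ldots,i_k$ are distinct elements of $[n]$, so no simple reflection is repeated. Hence $\pi$ is a bijection on closed points.

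Next, I would assemble the remaining geometric inputs. The Bott--Samelson space $P_{s_{i_1}} \times^B \cdots \times^B P_{s_{i_k}} \times^B \Fl_\omega$ is smooth, being an iterated $\bP^1$-bundle over the (smooth, zero-dimensional) variety $\Fl_\omega$. The target $\overline{\Fl_{w_I}}$ is smooth by Corollary~\ref{cor:smooth}, hence in particular normal and irreducible. The map $\pi$ is proper since it is a morphism between projective varieties.

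The key step is then the general fact that a proper bijective morphism between irreducible varieties with normal target is an isomorphism. Concretely: $\pi$ is proper with singleton fibers, hence finite; both source and target have the same dimension $|I|$, so $\pi$ is birational; a finite birational morphism to a normal variety is an open immersion by Zariski's Main Theorem; and properness plus bijectivity forces the image to be all of $\overline{\Fl_{w_I}}$. Thus $\pi$ is an isomorphism.

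The one subtlety worth emphasizing is that bijectivity alone is far from sufficient: general Schubert varieties are singular, and their Bott--Samelson resolutions are bijective over many open strata without being isomorphisms. The genuine content of the corollary is therefore not the bijection statement (already provided by Lemma~\ref{lem:bs-bij}) but rather the smoothness of the target coming from Corollary~\ref{cor:smooth}, which in turn ultimately rests on the explicit affine chart $u : \Aff \to \ocGr_\ffw$ constructed in Lemma~\ref{lem:chart}.
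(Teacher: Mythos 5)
Your proof is correct and takes essentially the same approach as the paper's: both combine the bijectivity from Lemma~\ref{lem:bs-bij} with the smoothness from Corollary~\ref{cor:smooth} and invoke Zariski's main theorem. Your write-up is more detailed than the paper's one-line appeal to ZMT (carefully unpacking properness, finiteness, and birationality, with a correct note that the real content is smoothness rather than bijectivity), but the key inputs and logical structure are identical.
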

\begin{proof}
It is a well-known consequence of Zariski's main theorem that a bijective map between smooth complex varieties is an isomorphism, so this follows from Lemma~\ref{lem:bs-bij} and Corollary~\ref{cor:smooth}.
\end{proof}

\subsection{Parity sheaves}
\label{ss:glob-parity}

Observe that the collection $\{\Fl_I\}_{I \subsetneq [n]} \cup \{(\ocGr_\ffw)_\gen\}$ constitutes a stratification of $\ocGr_\ffw$. By Corollary~\ref{cor:smooth}, the closure of every stratum is smooth, so the constant sheaf is a parity sheaf.  We introduce the notation
\[
\cE(I) = 
\begin{cases}
\ubk_{\overline{\Fl_I}}\{|I|\} & \text{if $I \subsetneq [n]$,} \\
\ubk_{\ocGr_\ffw}\{n\} & \text{if $I = [n]$.}
\end{cases}
\]
This is a perverse parity sheaf. If $i \in I$, there is a canonical morphism
\[
\aep_i: \cE(I) \to \cE(I \smallsetminus \{i\})\{1\}
\]
induced by $*$-restriction and adjunction, and another canonical morphism
\[
\aet_i: \cE(I \smallsetminus \{i\})\{-1\} \to \cE(I)
\]
induced by $!$-restriction and adjunction.  We may occasionally write $\cE^{\ocGr_\ffw}(I)$, $\aep_i^{\ocGr_\ffw}$, or $\aet_i^{\ocGr_\ffw}$ to avoid ambiguity with the notation from Section~\ref{sec:affine-pre}.

\begin{lem}\label{lem:chart-parity}
We have $u^* \cE^{\ocGr_\ffw}(I) \cong \cE^{\Aff}(I)$.  Moreover, $u^*\aep_i^{\ocGr_\ffw}$ can be identified with $\aep_i^{\Aff}$, and $u^*\aet_i^{\ocGr_\ffw}$ can be identified with $\aet_i^{\Aff}$.
\end{lem}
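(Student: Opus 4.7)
The plan is to leverage the fact (Lemma~\ref{lem:chart}) that $u$ is an open embedding compatible with the stratifications: $u^*$ is then exact restriction, and it commutes with pushforward and restriction for closed embeddings of strata via the trivial base change for an open/closed pair. Everything then reduces to a single geometric identification of preimages.

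The main step I would carry out first is to show
\[
u^{-1}(\overline{\Fl_I}) = \overline{\Affs_I} \qquad \text{for every $I \subsetneq [n]$}
\]
(the case $I = [n]$ is trivial). By Lemma~\ref{lem:chart}, $u(\Affs_I) = \Fl_I \cap u(\Aff)$ is an open subvariety of the Schubert cell $\Fl_I \cong \bA^{\ell(w_I)}$. The reduced expression $w_I = s_{i_1} \cdots s_{i_k}\omega$ from Lemma~\ref{lem:adm-wext} has length $k = |I|$, so $\dim \Fl_I = |I| = \dim \Affs_I$; hence $u(\Affs_I)$ is a nonempty open, and therefore dense, subvariety of the irreducible variety $\Fl_I$. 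Taking closures in $\ocGr_\ffw$ gives $\overline{u(\Affs_I)} = \overline{\Fl_I}$, and intersecting with the open set $u(\Aff)$ (using that closure in an open subset agrees with the ambient closure intersected back) yields $u(\overline{\Affs_I}) = \overline{\Fl_I} \cap u(\Aff)$, which is equivalent to the claim.

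From this, the first assertion of the lemma is immediate, since pullback of $\ubk_{\overline{\Fl_I}}\{|I|\}$ along the open embedding $u$ is $\ubk_{\overline{\Affs_I}}\{|I|\} = \cE^{\Aff}(I)$. For the morphisms I would argue by naturality of adjunction: $\aep_i^{\ocGr_\ffw}$ is, by construction, the adjunction unit for the closed embedding $a$ of $\overline{\Fl_{I\smallsetminus\{i\}}}$ into $\overline{\Fl_I}$, and the base change isomorphism $u^* a_* \simeq a_* u^*$ (for the analogous closed embedding on $\Aff$ produced by the first step) intertwines adjunction units, so $u^*\aep_i^{\ocGr_\ffw}$ is identified with $\aep_i^{\Aff}$. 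The argument for $\aet_i^{\ocGr_\ffw}$ is identical, using the counit of $!$-restriction. The only substantive obstacle is the geometric identification in the first step; the rest is a formal consequence of $u$ being an open embedding, with the mildly delicate point being the density of $u(\Affs_I)$ in $\Fl_I$ via the dimension count $\ell(w_I) = |I|$.
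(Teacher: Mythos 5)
Your proof is correct and fills in what the paper leaves as ``immediate'' from Lemma~\ref{lem:chart}: the small additional fact one really needs is the closure identification $u^{-1}(\overline{\Fl_I}) = \overline{\Affs_I}$, which you establish cleanly via density of $u(\Affs_I)$ in the cell $\Fl_I$ (using $\ell(w_I) = |I|$), and the rest is the standard compatibility of open pullback with closed pushforward and adjunction units. This matches the paper's (terse) approach.
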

\begin{proof}
This follows immediately from Lemma~\ref{lem:chart}.
\end{proof}

The following lemma relates these objects to the convolution structure discussed in Section~\ref{ss:affflag}.

\begin{lem}\label{lem:conv-accept}
Let $I \subsetneq [n]$, and let $i_1, \ldots, i_k$ be an acceptable order on $I$. Then there is a canonical isomorphism
\begin{equation}\label{eqn:conv-accept}
\cE_{s_{i_1}} \star \cdots \star \cE_{s_{i_k}} \star \cE_\omega \cong \cE(I).
\end{equation}
Moreover, via this identification, for any $i_t \in I$, we have
\begin{equation}\label{eqn:unit-accept}
\begin{aligned}
\id_{\cE_{s_{i_1}}} \star \cdots \star \id_{\cE_{s_{i_{t-1}}}} \star \blackupper_{\ \ s_{i_t}}{} \star \id_{\cE_{s_{i_{t+1}}}} \star \cdots \star \id_{\cE_{s_{i_k}}} \star \id_{\cE_\omega} &= \aep_{i_t}, \\
\id_{\cE_{s_{i_1}}} \star \cdots \star \id_{\cE_{s_{i_{t-1}}}} \star \blacklower^{\ \ s_{i_t}}{} \star \id_{\cE_{s_{i_{t+1}}}} \star \cdots \star \id_{\cE_{s_{i_k}}} \star \id_{\cE_\omega} &= \aet_{i_t}.
\end{aligned}
\end{equation}
\end{lem}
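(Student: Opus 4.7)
The plan is to transport both assertions, via Corollary~\ref{cor:bs-isom}, to essentially tautological statements about constant sheaves and adjunctions on $\overline{\Fl_{w_I}}$.

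For~\eqref{eqn:conv-accept}: By Lemma~\ref{lem:adm-wext}\eqref{it:adm-indep}, the expression $w_I = s_{i_1} \cdots s_{i_k}\omega$ is reduced of length $k = |I|$, so $\dim \overline{\Fl_{w_I}} = |I|$.  The canonical isomorphism recalled in Section~\ref{ss:affflag} identifies the convolution $\cE_{s_{i_1}} \star \cdots \star \cE_{s_{i_k}} \star \cE_\omega$ with $\pi_*\underline{\bk}_Z\{|I|\}$, where $\pi : Z \to \overline{\Fl_{w_I}}$ is the Bott--Samelson resolution for this reduced expression.  Corollary~\ref{cor:bs-isom} states that $\pi$ is an isomorphism of varieties, so $\pi_*\underline{\bk}_Z\{|I|\} = \underline{\bk}_{\overline{\Fl_I}}\{|I|\} = \cE(I)$.

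For~\eqref{eqn:unit-accept}: By construction, $\blackupper_s$ and $\blacklower^s$ are nothing but the $*$- and $!$-restriction-and-adjunction morphisms for the closed inclusion of the base point $\Fl_1 \hookrightarrow \overline{\Fl_s} \cong \bP^1$.  A short combinatorial check, using the cyclic-order characterization of acceptable orders, shows that for each $t$, deleting $i_t$ from the list $i_1, \ldots, i_k$ (and, when $t = 1$, restarting at $i_2$) produces an acceptable order on $I \smallsetminus \{i_t\}$.  Let $Z'$ denote the associated smaller Bott--Samelson; Corollary~\ref{cor:bs-isom}, applied to $I \smallsetminus \{i_t\}$, implies that the resolution $\pi' : Z' \to \overline{\Fl_{I \smallsetminus \{i_t\}}}$ is likewise an isomorphism.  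The inclusion of the base point $B/B \hookrightarrow P_{s_{i_t}}/B$ in the $t$th factor defines a closed embedding $j : Z' \hookrightarrow Z$, and by construction $\pi \circ j$ agrees with $\pi'$ followed by the Bruhat closure embedding $\overline{\Fl_{I \smallsetminus \{i_t\}}} \hookrightarrow \overline{\Fl_I}$.  Convolving $\blackupper_{s_{i_t}}$ with identities on the other factors corresponds, via the Bott--Samelson pushforward isomorphism, to $\pi_*$ applied to the unit $\underline{\bk}_Z\{|I|\} \to j_*j^*\underline{\bk}_Z\{|I|\}$ of the $*$-adjunction for $j$; after the identification $\pi_* j_* = \pi'_*$, this is the definition of $\aep_{i_t}$.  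The analogous argument with $!$-restriction in place of $*$-restriction identifies the convolved $\blacklower^{s_{i_t}}$ with $\aet_{i_t}$.

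The main obstacle is the compatibility statement invoked in the previous paragraph: that convolving an adjunction morphism on a single factor with identities on the remaining factors really does produce, under the canonical convolution-equals-Bott--Samelson-pushforward isomorphism, the corresponding sheaf-theoretic adjunction morphism for the induced closed embedding of Bott--Samelson varieties.  This requires unwinding the definition of convolution as a pushforward from a twisted external product over $B$ and tracking how $*$- and $!$-restrictions interact with that twisted product.  This is well known in the Hecke-category literature but must be spelled out carefully here to make the proof rigorous.
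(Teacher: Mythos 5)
Your proposal follows essentially the same route as the paper's proof: both identify the convolution with the Bott--Samelson pushforward, invoke Corollary~\ref{cor:bs-isom} for both $I$ and $I\smallsetminus\{i_t\}$ to replace the Bott--Samelson resolutions by isomorphisms, and read off the adjunction morphisms from the resulting commutative square of closed embeddings. The compatibility between convolution-of-adjunction-maps and sheaf-theoretic adjunction that you flag as needing to be spelled out is treated exactly as tersely in the paper (it simply asserts ``The isomorphisms in~\eqref{eqn:unit-accept} follow''), so your proof is no less complete than the published one.
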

\begin{proof}
The left-hand side of~\eqref{eqn:conv-accept} is canonically (see~\cite[\S10.2]{rw:tmpcb}) isomorphic to $\pi_*\ubk\{\dim \Fl_{w_I}\}$, where $\pi$ is the Bott--Samelson resolution indicated in the bottom row of~\eqref{eqn:accept-comm} below.  The isomorphism~\eqref{eqn:conv-accept} then follows from Corollary~\ref{cor:bs-isom}.
\begin{equation}\label{eqn:accept-comm}
\begin{tikzcd}[column sep=small]
P_{s_{i_1}} \times^B \cdots \times^B P_{s_{i_{t-1}}} \times^B \rlap{$B$}\hphantom{P_{s_{i_t}}} \times^B P_{s_{i_{t+1}}} \times^B \cdots \times^B P_{s_{i_k}} \times^B \Fl_{\omega} \ar[r] \ar[d, hook] 
  & \overline{X_{I \smallsetminus \{i_t\}}} \ar[d, hook] \\
P_{s_{i_1}} \times^B \cdots \times^B P_{s_{i_{t-1}}} \times^B P_{s_{i_t}} \times^B P_{s_{i_{t+1}}} \times^B \cdots \times^B P_{s_{i_k}} \times^B \Fl_{\omega} \ar[r]
  & \overline{X_I}
\end{tikzcd}
\end{equation}
That corollary also tells us that the top horizontal map is also an isomorphism.  Both vertical maps are closed embeddings.  The left-hand sides of~\eqref{eqn:unit-accept} are induced by $*$- or $!$-restriction and adjunction in the left-hand column of~\eqref{eqn:accept-comm}, while the right-hand sides of~\eqref{eqn:unit-accept} are defined analogously using the right-hand column of~\eqref{eqn:accept-comm}.  The isomorphisms in~\eqref{eqn:unit-accept} follow.
\end{proof}

\begin{prop}\label{prop:glob-unit-sum}
For any $I \subset [n]$ with $|I| \le n-2$, we have
\[
\sum_{i \notin I} \aep_i \circ \aet_i + \sum_{i \in I} \aet_i \circ \aep_i = \xi \cdot \id_{\cE(I)}.
\]
\end{prop}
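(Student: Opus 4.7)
The plan is to reduce to the affine identity of Proposition~\ref{prop:aff-unit-sum} via the open chart $u : \Aff \hookrightarrow \ocGr_\ffw$ of Lemma~\ref{lem:chart}, combined with an equivariant localization argument. First, by Lemma~\ref{lem:chart-parity}, the pullback $u^*$ identifies $\cE^{\ocGr_\ffw}(I)$ with $\cE^{\Aff}(I)$ and sends the canonical morphisms $\aep_i^{\ocGr_\ffw}, \aet_i^{\ocGr_\ffw}$ to their affine counterparts. The hypothesis $|I| \le n-2$ is what makes this reduction possible: for every $i \notin I$, it guarantees that $I \cup \{i\}$ remains a proper subset of $[n]$, so $\cE^{\ocGr_\ffw}(I \cup \{i\})$ is defined and Lemma~\ref{lem:chart-parity} also applies to it, making the compositions $\aep_i \circ \aet_i$ (which factor through $\cE(I \cup \{i\})\{-1\}$) compatible with their affine versions under $u^*$. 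Pulling back the desired equation thus yields the identity of Proposition~\ref{prop:aff-unit-sum}, which holds in $\End(\cE^{\Aff}(I))$.

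The remaining issue is that $u^* \colon \End(\cE^{\ocGr_\ffw}(I))_2 = \coh^2_{\tT}(\overline{\Fl_{w_I}}) \to \coh^2_{\tT}(\overline{\Affs_I})$ is not in general injective; its kernel consists of classes supported on the complement $\overline{\Fl_{w_I}} \smallsetminus u(\overline{\Affs_I})$. To upgrade the pullback identity to a global one, I plan to use equivariant localization. By Corollary~\ref{cor:bs-isom}, $\overline{\Fl_{w_I}}$ is an iterated $\bP^1$-bundle over a point, so it has isolated $\tT$-fixed points (indexed by subsets $J \subseteq I$, corresponding to the $\ffw$-admissible elements $w_J \le w_I$), and its $\tT$-equivariant cohomology injects (after appropriate localization) into the sum of fixed-point cohomologies. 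It therefore suffices to verify the identity at each $\tT$-fixed point $p_{w_J}$.

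The main obstacle is verifying the identity at fixed points $p_{w_J}$ with $J \ne \varnothing$, since only the base fixed point $p_\omega = u(0)$ is directly covered by the chart $u$. One approach is to construct, for each $p_{w_J}$, an analogous affine chart (using the Bott--Samelson coordinate description or suitable loop-group translates) that reduces the local identity to Proposition~\ref{prop:aff-unit-sum}. An alternative is to use Lemma~\ref{lem:conv-accept} to express $\cE(I)$ as a convolution $\cE_{s_{i_1}} \star \cdots \star \cE_{s_{i_k}} \star \cE_\omega$ and compute each term by way of the barbell relation $\blackupper_s \circ \blacklower^s = \alpha_s \cdot \id_{\cE_1}$; the subtlety is that convolution twists the $\coh^*_{\tT}(\pt)$-module structure on each $\cE_s$ by the Weyl element $s$, so the restriction of a given term at $p_{w_J}$ involves a Weyl-twisted simple root. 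A careful accounting is required to verify that, at every fixed point, these individually twisted contributions reshuffle so that the total sum is always $\alpha_1 + \cdots + \alpha_n = \xi$, matching the right-hand side.
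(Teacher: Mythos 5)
Your framework is not unreasonable, and you correctly diagnose the central difficulty: since $\overline{\Affs_I}$ is $\tT$-equivariantly contractible, $u^*$ on $\uEnd(\cE(I)) \cong \coh^\bullet_\tT(\overline{\Fl_{w_I}})$ amounts to restriction to the single fixed point $u(0)=\Fl_\omega$ and is far from injective, so pulling back to the chart proves nothing by itself; and restriction to \emph{all} $\tT$-fixed points of $\overline{\Fl_{w_I}}$ is injective, so a fixed-point check would suffice. But the proof stops exactly where the content of the proposition begins. The classes $\aet_i\aep_i$ ($i\in I$) are the equivariant classes of the divisors $\overline{\Fl_{w_{I\smallsetminus\{i\}}}}\subset\overline{\Fl_{w_I}}$, and the classes $\aep_i\aet_i$ ($i\notin I$) are the Euler classes of the normal bundles of $\overline{\Fl_{w_I}}$ in $\overline{\Fl_{w_{I\cup\{i\}}}}$. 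Unlike on $\Aff$, these are \emph{not} individually constant multiples of the identity (already for $n=3$, $I=\{1\}$, the class $\aet_1\aep_1=[\Fl_\omega]\in\coh^2_\tT(\bP^1)$ restricts to $0$ at one fixed point and to a nonzero weight at the other); only their sum is constant, and proving this at the fixed points $p_{w_J}$ with $J\ne\varnothing$, where all the weights are permuted by $w_J$, is the entire proof. You explicitly defer this to ``a careful accounting'' and execute neither of your two sketched strategies, so the proposal as written establishes nothing beyond the (insufficient) restriction to $u(0)$.

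A symptom of the gap is where you locate the hypothesis $|I|\le n-2$. The object $\cE([n])=\ubk_{\ocGr_\ffw}\{n\}$ is perfectly well defined and Lemma~\ref{lem:chart-parity} applies to it, so every step you actually carry out goes through verbatim for $|I|=n-1$; the hypothesis therefore does no work in the written portion of your argument and must be used in the computation you have postponed. For comparison, the paper's proof runs entirely through Lemma~\ref{lem:conv-accept} and the Elias--Williamson calculus: it establishes the local identity~\eqref{eqn:block-unit-sum} for ``blocks'' by induction, using the barbell and polynomial-forcing relations of~\cite{ew:sc} (the latter being precisely the algebraic mechanism that accounts for the Weyl twisting you mention), and then sums over a decomposition of $[n]$ into blocks whose cores union to $I$ --- a decomposition that exists exactly when $|I|\le n-2$. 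Your second sketched approach is in this spirit, but the ``reshuffling'' you leave unverified \emph{is} the proof.
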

\begin{proof}
A proper subset $B \subsetneq [n]$ is said to be a \emph{block} if it admits a unique acceptable order.  (In other words, a block is a sequence of consecutive labels in~\eqref{eqn:circle}, reading in clockwise order.) Given a nonempty block $B$ with acceptable order $B= (b_1,\ldots, b_k)$, we define the \emph{core} of $B$ to be the block $B^\circ = (b_1, \ldots, b_{k-1})$, and its \emph{tail} to be the remaining integer $b_k$.

Let $B$ be a nonempty block, with core $B^\circ$ and tail $j$. We begin by proving the following auxiliary statement, an equality of maps $\cE(B^\circ) \to \cE(B^\circ)\{2\}$:
\begin{equation}\label{eqn:block-unit-sum}
\aep_j \circ \aet_j + \sum_{i \in B^\circ} \aet_i \circ \aep_i = \Big(\sum_{b \in B} \alpha_b\Big) \cdot \id_{\cE(B^\circ)}.
\end{equation}
We proceed by induction on the number of elements in $B^\circ$.  If $B^\circ$ is empty (so that $\cE(B^\circ) = \cE_\omega$), then by Lemma~\ref{lem:conv-accept}, the left-hand side of~\eqref{eqn:block-unit-sum} reduces to
\[
\aep_j \circ \aet_j =
(\blackupper_{s_j}{} \star \id_{\cE_\omega}) \circ (\blacklower^{s_j}{} \star \id_{\cE_\omega}) = \alpha_j \cdot \id_{\cE(B^\circ)},
\]
where the last step follows from~\cite[\S1.4.1]{ew:sc}.

Now, assume that $B^\circ$ is nonempty.  Let ${b_1}$ be its first element (in the acceptable order), and let $B' = B \smallsetminus \{{b_1}\}$.  Of course, $B'$ is still a block, and $B$ and $B'$ have the same tail.  We have
\[
\cE(B^\circ) \cong \cE_{s_{b_1}} \star \cE(B^{\prime\circ}).
\]
In the following calculation, we will label some of the maps with superscripts to indicate the domain.  We have
\begin{multline}\label{eqn:block1}
\aep_j \circ \aet^{B^\circ}_j + \sum_{i \in B^\circ} \aet_i \circ \aep^{B^\circ}_i \\
=  \id_{\cE_{s_{b_1}}} \star (\aep_j \circ \aet^{B^{\prime\circ}}_j) + \Big(\sum_{i \in B^{\prime\circ}} \id_{\cE_{s_{b_1}}} \star ( \aet_i \circ \aep^{B^{\prime\circ}}_i) \Big) + \aet_{b_1} \circ \aep^{B^\circ}_{b_1} \\
= \sum_{b \in B'} \id_{\cE_{s_{b_1}}} \star (\alpha_b \cdot \id_{\cE(B^{\prime\circ})}) + 
(\blacklower^{s_{b_1}}{} \star \id_{\cE(B^{\prime\circ})}) \circ (\blackupper_{s_{b_1}}{} \star \id_{\cE(B^{\prime\circ})}).
\end{multline}
Let $b_2$ be the first element of $B'$ (in the acceptable order).  The simple reflections $s_{b_1}$ and $s_{b_2}$ do not commute ($b_1$ and $b_2$ are adjacent labels in~\eqref{eqn:circle}), but $s_{b_1}$ commutes with $s_b$ for all $b \in B' \smallsetminus \{b_2\}$.  According to~\cite[\S1.4.1]{ew:sc}, we have
\begin{equation}\label{eqn:block2}
\id_{\cE_{s_{b_1}}} \star \alpha_b =
\begin{cases}
\alpha_b \star \id_{\cE_{s_{b_1}}} & \text{if $b \ne b_2$,} \\
(\alpha_{b_1} + \alpha_{b_2}) \star \id_{\cE_{s_{b_1}}} - \blacklower^{s_{b_1}}{} \circ \blackupper_{s_{b_1}}{} & \text{if $b = b_2$.}
\end{cases}
\end{equation}
Combining~\eqref{eqn:block1} and~\eqref{eqn:block2}, we obtain~\eqref{eqn:block-unit-sum}.

We can generalize~\eqref{eqn:block-unit-sum} as follows.  Let $I \subsetneq [n]$ be a subset such that $I \cap B = B^\circ$, and such that $I$ admits an acceptable order starting with $B^\circ$.  The same calculation as above shows that
\begin{equation}\label{eqn:block-gen}
\aep_j \circ \aet_j + \sum_{i \in B^\circ} \aet_i \circ \aep_i = \Big(\sum_{b \in B} \alpha_b\Big) \cdot \id_{\cE(I)}.
\end{equation}

We now return to the main statement of the proposition.  It is easily seen that that there is a unique way to write the set $[n]$ as a disjoint union of blocks
\[
[n] = B_1 \sqcup \cdots \sqcup B_r
\qquad\text{such that}\qquad
I = B_1^\circ \cup \cdots \cup B_r^\circ.
\]
(This is where the assumption that $|I| \le n-2$ is required: this block decomposition does not exist if $|I| > n-2$.)  Assume that these blocks are numbered in such a way that if we list the elements of $B_1^\circ$, then those of $B_2^\circ$, etc., according to the acceptable order for each block, the resulting list is in an acceptable order on $I$.  Moreover, if we cyclically permute the blocks, say as
\[
B_t, B_{t+1}, \ldots, B_r, B_1, \ldots, B_{t-1},
\]
and then list the elements in their cores as above, we again obtain an acceptable order on $I$.

Let $j_t$ denote the tail of $B_t$.  We have
\[
\sum_{i \notin I} \aep_i \circ \aet_i + \sum_{i \in I} \aet_i \circ \aep_i = 
\sum_{t=1}^r \Big(\aep_{j_t} \circ \aet_{j_t} + \sum_{i \in B_t^\circ}\aet_i \circ \aep_i \Big).
\]
For each $t$, $I$ admits an acceptable order starting with $B_\circ^t$, so we can apply~\eqref{eqn:block-gen} to the right-hand side above.  We conclude that
\[
\sum_{i \notin I} \aep_i \circ \aet_i + \sum_{i \in I} \aet_i \circ \aep_i
= \sum_{t=1}^r \sum_{b \in B_t} \alpha_b \cdot \id_{\cE(I)} = \xi \cdot \id_{\cE(I)},
\]
as desired.
\end{proof}

\section{The nearby cycles sheaf on \texorpdfstring{$\ocGr_\ffw$}{Grpi1}}
\label{sec:nearby2}

Let us introduce the notation
\[
\cE_\gen = \ubk_{(\ocGr_\ffw)_\gen}\{n\} \in \Parity_\Gm((\ocGr_\ffw)_\gen/T,\bk).
\]
The goal of this section is to compute $\Psi_f(\cE_\gen)$. Note that compared to Proposition~\ref{prop:aff-unit-sum}, Proposition~\ref{prop:glob-unit-sum} is missing a few cases. Unfortunately, the calculations in Sections~\ref{sec:direct-sum}--\ref{sec:nearby} make use of all the cases of Proposition~\ref{prop:aff-unit-sum}, so we cannot simply copy those computations for $\ocGr_\ffw$.

Instead, we take a roundabout approach.  Let
\[
u_0: \Affs_0 \to (\ocGr_\ffw)_0
\qquad\text{and}\qquad
u_\gen: \Affs_\gen \to (\ocGr_\ffw)_\gen
\]
be the restrictions of the map $u: \Aff \to \ocGr_\ffw$ from Lemma~\ref{lem:chart}.  We will first show that $u_0^*$ is fully faithful on perverse sheaves, and we will then use this to show that the desired result on $\ocGr_\ffw$ can be deduced from Theorem~\ref{thm:nearby}.

\subsection{Mixed perverse sheaves on \texorpdfstring{$\Aff$}{An} and \texorpdfstring{$\ocGr_\ffw$}{Grpi1}}

Recall that $\bk$ is either a field or a complete discrete valuation ring. In the case where $\bk$ is not a field, let $\pi$ be a generator of its maximal ideal.  

Throughout this subsection, we will treat the parity sheaves $\cE(I)$ as $T$-equivariant objects, and we will work in the $T$-equivariant derived category.  However, the same statements hold in the $\tT$-equivariant setting, with the same proofs.

For each $I \subset [n]$, we set
\[
\bar\cE(I) = \mathrm{cone}(\cE(I) \xrightarrow{\pi \cdot \id} \cE(I))
\qquad\text{in $\Dmix(\ocGr_\ffw/T,\bk)$.}
\]

\begin{lem}
\phantomsection\label{lem:perv-filt}
\begin{enumerate}
\item If $\bk$ is a field, every perverse sheaf $\cF \in \Perv^\mix(\ocGr_\ffw/T,\bk)$ admits a finite filtration whose subquotients are of the form $\cE(I)\la k\ra$ for some $I \subset [n]$ and some $k \in \Z$.\label{it:filt-field}
\item If $\bk$ is not a field, every perverse sheaf $\cF \in \Perv^\mix(\ocGr_\ffw/T,\bk)$ admits a finite filtration whose subquotients are of the form $\cE(I)\la k\ra$  or $\bar\cE(I)\la k\ra$ for some $I \subset [n]$ and some $k \in \Z$.\label{it:filt-dvr}
\end{enumerate}
\end{lem}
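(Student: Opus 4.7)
The plan is to identify the simple objects of $\Perv^\mix(\ocGr_\ffw/T,\bk)$ and then argue that every object has finite length, so that an actual composition series witnesses the filtration.

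First, I would classify the simples. Every simple in $\Perv^\mix(\ocGr_\ffw/T,\bk)$ is of the form $j_{!*}\mathcal{L}$, where $j \colon S \hookrightarrow \ocGr_\ffw$ is the inclusion of a stratum and $\mathcal{L}$ is a simple object of $\Perv^\mix(S/T,\bk)$ with respect to the trivial stratification on $S$. For each of our strata --- the closed ones $\Fl_I \cong \bA^{|I|}$ (affine cells with linear $T$-action) and the open one $(\ocGr_\ffw)_\gen$ (smooth by Corollary~\ref{cor:smooth}) --- the parity sheaves constructible with respect to the trivial stratification on $S$ are direct sums of Tate twists of $\ubk_S\{\dim S\}$, by the rigidity of the parity condition on a single stratum. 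Consequently, the simples of $\Perv^\mix(S/T,\bk)$ are the $\ubk_S\{\dim S\}\la k\ra$ in the field case, together with their mod-$\pi$ reductions in the DVR case. Since $\overline S$ is smooth (Corollary~\ref{cor:smooth}), the intermediate extension $j_{!*}$ sends $\ubk_S\{\dim S\}\la k\ra$ to $\cE(I_S)\la k\ra$ and sends its mod-$\pi$ reduction to $\bar\cE(I_S)\la k\ra$; the second identification follows from the fact that $j_{!*}$ commutes with taking the cone of $\pi \cdot \id$ on a simple object.

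Next, I would show that every object of $\Perv^\mix(\ocGr_\ffw/T,\bk)$ admits a finite composition series. Since each such object is represented in the parity-sheaf formalism by a bounded graded parity sheaf equipped with a differential, a standard induction --- for instance on the number of strata meeting the support of $\cF$ --- combined with the perverse recollement applied at the open stratum of $\mathrm{supp}(\cF)$ in each step, produces the required composition series with simple subquotients of the form identified above.

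The main anticipated obstacle is the DVR case: here $\Perv^\mix(S/T,\bk)$ is not semisimple, and extensions involving the mod-$\pi$ simples must be tracked carefully. Moreover, the open stratum $(\ocGr_\ffw)_\gen \cong (\bA^1 \smallsetminus \{0\}) \times \bP^{n-1}$ is not a cell, so one must verify that no exotic $T$-equivariant local systems appear as simples on it; this is ensured by the rigidity of the parity condition, which forces the parity sheaves on any single smooth stratum to be direct sums of shifted constant sheaves.
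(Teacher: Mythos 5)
Your field-case argument is essentially fine and close in spirit to the paper, which simply observes that since all stratum closures are smooth (Corollary~\ref{cor:smooth}) the objects $\cE(I)$ \emph{are} the $\IC$ sheaves, and then invokes the general fact that mixed perverse sheaves over a field have finite length.

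For the DVR case, however, there is a genuine gap. You propose to produce a \emph{finite composition series with simple subquotients}. But in the DVR setting the object $\cE(I)\la k\ra \cong \IC(\overline{\Fl_I},\bk)$ is \emph{not} simple --- multiplication by $\pi$ gives a proper subobject with quotient $\bar\cE(I)\la k\ra$ --- and in fact it has \emph{infinite} length as a perverse sheaf (its composition factors are infinitely many copies of $\bar\cE(I)\la k\ra$). This also means your classification of simples is off: only the $\bar\cE(I)\la k\ra$ are simple when $\bk$ is not a field. Consequently, ``induction to build a finite composition series'' cannot succeed, and what the lemma actually asserts is something weaker and of a different flavor: a finite filtration whose subquotients lie in the \emph{non-simple} family $\{\cE(I)\la k\ra,\ \bar\cE(I)\la k\ra\}$. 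That this exists is not a formal consequence of recollement plus boundedness; it is precisely the content of the result the paper cites --- a mixed variant of~\cite[Lemma~2.1.4 and Remark~2.1.5]{rsw:mkd} --- which organizes a mixed perverse sheaf over a complete DVR into finitely many torsion-free pieces ($\IC$ with $\bk$ coefficients) and torsion pieces ($\IC$ with $\bk/\pi$ coefficients). Without such a structural input your induction does not terminate in the right form. (A smaller quibble: the claim that $j_{!*}$ ``commutes with the cone of $\pi\cdot\id$'' is not a general property of intermediate extension; here $\bar\cE(I)\cong\IC(\overline{\Fl_I},\bk/\pi)$ follows directly from smoothness of $\overline{\Fl_I}$, not from any exactness of $j_{!*}$.)
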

\begin{proof}
We have $\cE(I) \cong \IC((\ocGr_\ffw)_I, \bk)$, and if $\bk$ is not a field, we also have $\bar\cE(I) \cong \IC((\ocGr_\ffw)_I, \bk/(\pi))$.  Part~\eqref{it:filt-field} is just a restatement of the fact that when $\bk$ is a field, every mixed perverse sheaf has finite length.

When $\bk$ is a complete discrete valuation ring, perverse sheaves need not have finite length, but a mixed variant of~\cite[Lemma~2.1.4 and Remark~2.1.5]{rsw:mkd} implies that every mixed perverse sheaf admits a finite filtration of the desired form.
\end{proof}

\begin{lem}\label{lem:simple-ext}
Let $I, J \subsetneq [n]$.
\begin{enumerate}
\item The following natural map is an isomorphism for $i = 0,1$:\label{it:simple-exto}
\[
\Hom(\cE(I), \cE(J)\la k\ra[i]) \to \Hom(u^*\cE(I), u^*\cE(J)\la k\ra[i]).
\]
\item Suppose that $\bk$ is not a field.  Then the following maps are isomorphisms for $i = 0,1$:\label{it:simple-extb}
\begin{align}
\Hom(\cE(I), \bar\cE(J)\la k\ra[i]) &\to \Hom(u^*\cE(I), u^*\bar\cE(J)\la k\ra[i]), \label{eqn:ext-ob}\\
\Hom(\bar\cE(I), \cE(J)\la k\ra[i]) &\to \Hom(u^*\bar\cE(I), u^*\cE(J)\la k\ra[i]), \label{eqn:ext-bo}\\
\Hom(\bar\cE(I), \bar\cE(J)\la k\ra[i]) &\to \Hom(u^*\bar\cE(I), u^*\bar\cE(J)\la k\ra[i]). \label{eqn:ext-bb}
\end{align}
\end{enumerate}
\end{lem}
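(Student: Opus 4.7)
The plan is to use the recollement corresponding to the open embedding $u$ and its closed complement $\iota : Z \hookrightarrow \ocGr_\ffw$, where $Z := \ocGr_\ffw \setminus \im(u)$, together with the key geometric observation that $Z$ has complex codimension at least $1$ in each stratum. By Lemma~\ref{lem:chart}, for each $K \subsetneq [n]$ the stratum $\Fl_K \cong \bA^{|K|}$ meets $\im(u)$ in the open dense subset $\Affs_K \cong (\Gm)^{|K|}$, so $Z \cap \Fl_K$ is the union of the $|K|$ coordinate hyperplanes. Consequently $Z \cap \overline{\Fl_{I \cap J}}$ has complex dimension strictly less than $|I \cap J|$ for any $I, J \subsetneq [n]$ (and is empty when $I \cap J = \varnothing$).

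Next, I would apply $\Hom(\cF, -)$ to the standard distinguished triangle $\iota_*\iota^!\cG \to \cG \to u_*u^*\cG \xrightarrow{+1}$. Since $\Hom(\cF, u_*u^*\cG\la k\ra[i]) \cong \Hom(u^*\cF, u^*\cG\la k\ra[i])$ by the $u^* \dashv u_*$ adjunction, the comparison map of the lemma appears in the resulting long exact sequence, and is an isomorphism for $i = 0, 1$ provided that $\Hom(\cF, \iota_*\iota^!\cG\la k\ra[i]) = 0$ for $i = 0, 1, 2$. By the $\iota_* \dashv \iota^!$ adjunction, the latter group equals $\Hom(\iota^*\cF, \iota^!\cG\la k\ra[i])$. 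Using smoothness of $\overline{\Fl_I}$ and $\overline{\Fl_J}$ (Corollary~\ref{cor:smooth}) together with proper base change, this identifies (for $\cF, \cG \in \{\cE(I), \cE(J)\}$) with a bigraded Hom-group on $Z \cap \overline{\Fl_{I \cap J}}$ between a shifted constant sheaf and a shifted dualizing complex; the bound $\dim(Z \cap \overline{\Fl_{I \cap J}}) \le |I \cap J| - 1$ then forces the required vanishing by the usual dimensional bound on equivariant Borel--Moore homology.

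This handles part~(1). Part~(2) follows by the same argument, with the coefficient ring $\bk$ replaced by the residue field $\bk/(\pi)$ wherever $\bar\cE$ appears (one identifies $\bar\cE(I) = \mathrm{cone}(\cE(I) \xrightarrow{\pi} \cE(I))$ with the mod-$\pi$ analogue of $\cE(I)$ on the smooth variety $\overline{\Fl_I}$); alternatively, it can be extracted from part~(1) via a five-lemma argument applied to the long exact sequences induced by $\cE \xrightarrow{\pi} \cE \to \bar\cE$ on source and target. The hardest step, in either approach, is expected to be the precise bookkeeping of the bigradings and Tate twists in $\Dmix$, which is what ensures that the geometric codimension bound translates exactly to vanishing in the bigraded range needed for the iso to hold at $i = 0, 1$ for all $k$.
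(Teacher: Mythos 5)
Your proposal takes a genuinely different route from the paper's. The paper reduces by support considerations and adjunction to the comparison $\Hom(\cE(K),\cE(K)\la k+r+s\ra[i-r-s])$ on $\overline{\Fl_K}$ versus $\overline{\Affs_K}$ (where $K=I\cap J$, $r=|I\setminus K|$, $s=|J\setminus K|$), identifies these $\Hom$-groups with equivariant cohomology groups, invokes the surjectivity of restriction to the attractive $T$-fixed point from Fiebig--Williamson, and finishes with a rank count; the $\bar\cE$ cases are then deduced by five-lemma diagram chases, just as you suggest. Your approach instead packages the comparison into the recollement long exact sequence for $u_0$ and $\iota$ and tries to kill the third term by a dimension bound. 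That is a natural-looking alternative, but there is a real gap in the vanishing claim.

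You require $\Hom(\cF,\iota_*\iota^!\cG\la k\ra[i])=0$ for $i=0,1,2$. Reducing to $Z_K = Z\cap\overline{\Fl_K}$ (a divisor of dimension $|K|-1$ in the smooth $|K|$-dimensional $\overline{\Fl_K}$) and unwinding shifts and twists, this group is, up to a Tate twist, the equivariant Borel--Moore homology $\coh^{BM,T}_{2|K|-i+r+s}(Z_K)$. The dimension bound kills it precisely when $2|K|-i+r+s > 2(|K|-1)$, i.e.\ when $i<2+r+s$. When $I\neq J$ (so $r+s\ge 1$) this covers $i=0,1,2$; but when $I=J$ (so $r=s=0$) it only covers $i=0,1$ and \emph{fails} at $i=2$: the group in question is the top-degree equivariant Borel--Moore homology of $Z_K$, which is nonzero (free on fundamental classes of top-dimensional components). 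Already for $n=2$ and $I=J=\{1\}$, with $Z_K$ a point, one has $\Hom(\iota^*\cE(I),\iota^!\cE(J)\la -2\ra[2])\cong\coh^0_T(\pt)\neq 0$. Consequently the LES does not directly give surjectivity of the comparison map at $i=1$ for $I=J$. The lemma is still true there, but for a reason your argument does not supply: when $I=J$ and $i=1$, both sides of the original map already vanish by parity (odd-degree equivariant cohomology of a smooth variety). So you would need to separate out the $I=J$, $i=1$ case and argue the parity vanishing explicitly; without that, the proof is incomplete. The precise bookkeeping of the Tate twist grading, which you flagged as the ``hardest step,'' is exactly where the argument breaks.
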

\begin{proof}
\eqref{it:simple-exto}~Let $K = I \cap J$, and let $h: \overline{\Fl_K} \to (\ocGr_\ffw)_0$ be the inclusion map.  The intersection of the supports of $\cE(I)$ and $\cE(J)$ is precisely $\overline{\Fl_K}$, so there is a natural isomorphism
\[
\Hom(h^*\cE(I), h^!\cE(J)\la k\ra[i]) \simto \Hom(\cE(I), \cE(J)\la k\ra[i]).
\]
Similarly, if we let $h': \overline{\Affs_K} \to \Aff$ be the inclusion map, there is a natural isomorphism
\[
\Hom((h')^*u^*\cE(I), (h')^!u^*\cE(J)[i]) \simto \Hom(u^*\cE(I), u^*\cE(J)\la k\ra[i]).
\]
The lemma thus reduces to the study of the natural map
\begin{equation}\label{eqn:u-ext2}
\Hom(h^*\cE(I), h^!\cE(J)\la k\ra[i]) \to \Hom((h')^*u^*\cE(I), (h')^!u^*\cE(J)\la k\ra[i]).
\end{equation}

Let $r = |I \smallsetminus K|$, and let $s = |J \smallsetminus K|$.  Observe that
\[
h^*\cE(I) \cong \cE(K)\la -r\ra[r]
\qquad\text{and}\qquad
h^!\cE(J) \cong \cE(K)\la s\ra[-s]
\]
Analogous statements hold on $\Aff$, so~\eqref{eqn:u-ext2} further reduces to the study of the map
\begin{equation}\label{eqn:u-ext3}
\Hom(\cE(K), \cE(K)\la k+r+s\ra[i-r-s]) \to \Hom(u^*\cE(K), u^*\cE(K)\la k+r+s\ra[i-r-s]).
\end{equation}
If $i \ne -k$, both sides vanish, so there is nothing to prove.  Assume from now on that $i = -k$. Then these $\Hom$-groups can be computed inside the category of parity sheaves, or inside the ordinary (nonmixed) derived category.  The map~\eqref{eqn:u-ext3} can thus be identified with the first map in the following sequence:
\begin{equation}\label{eqn:u-ext4}
\coh^{i-r-s}_T(\ubk_{\overline{\Fl_K}}) \to \coh^{i-r-s}_T(\ubk_{\overline{\Affs_K}}) \to \coh^{i-r-s}_T(\ubk_{\Affs_\varnothing}).
\end{equation}
The last term is the cohomology of the stalk of $\ubk_{\overline{\Fl_K}}$ at the point $\Fl_\omega$.  The composition of the maps in~\eqref{eqn:u-ext4} is surjective by~\cite[Theorem~5.7(2) and Proposition~7.1]{fw:psmg}.  Since the $T$-fixed point $\Affs_\varnothing$ is attractive, the second map in~\eqref{eqn:u-ext4} is an isomorphism.  We conclude that~\eqref{eqn:u-ext3} is surjective.

We wish to prove that~\eqref{eqn:u-ext3} is an isomorphism for $i = 0,1$.  Of course, both sides are free $\bk$-modules, so it is enough to prove that they have the same rank.  Recall that $r, s \ge 0$.  For $i = k = 0$, both sides of~\eqref{eqn:u-ext3} vanish unless $r = s = 0$, and in that case both sides have rank~$1$.  If $i = -k = 1$, then both sides vanish unless $r+s\le 1$.  In fact, they also vanish when $r = s = 0$ by parity considerations.  We therefore must have $r+s= 1$ and $i-r-s=0$, so again, both sides of~\eqref{eqn:u-ext3} have rank~$1$.

\eqref{it:simple-extb}~Consider the diagram
\[
\hbox{\small\begin{tikzcd}[row sep=small]
\ar[d] & \ar[d] \\
\Hom(\cE(I), \cE(J)\la k\ra[i]) \ar[r]\ar[d, "\pi"']
  & \Hom(u^*\cE(I), u^*\cE(J)[i]) \ar[d, "\pi"] \\
\Hom(\cE(I), \cE(J)\la k\ra[i]) \ar[d]\ar[r]
  & \Hom(u^*\cE(I), u^*\cE(J)\la k\ra[i]) \ar[d] \\
\Hom(\cE(I), \bar\cE(J)\la k\ra[i]) \ar[d]\ar[r]
  & \Hom(u^*\cE(I), u^*\bar\cE(J)\la k\ra[i]) \ar[d] \\
\Hom(\cE(I), \cE(J)\la k\ra[i+1]) \ar[d]\ar[r]
  & \Hom(u^*\cE(I), u^*\cE(J)\la k\ra[i+1]) \ar[d] \\
{} & {}
\end{tikzcd}}
\]
For $i = 0,1$, the terms in the fourth row vanish, and the horizontal maps in the first two rows are isomorphisms by part~\eqref{it:simple-exto}.  It follows that the third horizontal map is an isomorphism.  We have proved~\eqref{eqn:ext-ob}.

The proof of~\eqref{eqn:ext-bo} is similar, using the diagram
\begin{equation}\label{eqn:ext-bo-proof}
\hbox{\small\begin{tikzcd}[row sep=small]
\ar[d] & \ar[d] \\
\Hom(\cE(I), \cE(J)\la k\ra[i-1]) \ar[d]\ar[r]
  & \Hom(u^*\cE(I), u^*\cE(J)\la k\ra[i-1]) \ar[d] \\
\Hom(\bar\cE(I), \cE(J)\la k\ra[i]) \ar[d]\ar[r]
  & \Hom(u^*\bar\cE(I), u^*\cE(J)\la k\ra[i]) \ar[d] \\
\Hom(\cE(I), \cE(J)\la k\ra[i]) \ar[r]\ar[d, "\pi"']
  & \Hom(u^*\cE(I), u^*\cE(J)\la k\ra[i]) \ar[d, "\pi"] \\
\Hom(\cE(I), \cE(J)\la k\ra[i]) \ar[d]\ar[r] 
  & \Hom(u^*\cE(I), u^*\cE(J)\la k\ra[i]) \ar[d] \\
{} & {}
\end{tikzcd}}
\end{equation}

Finally, the isomorphism~\eqref{eqn:ext-bb} follows from~\eqref{eqn:ext-ob} using a commutative diagram very similar to~\eqref{eqn:ext-bo-proof}.
\end{proof}

\begin{prop}\label{prop:chart-ff}
The functor $u_0^*: \Perv((\ocGr_\ffw)_0/T, \bk) \to \Perv(\Affs_0/T,\bk)$ is fully faithful.
\end{prop}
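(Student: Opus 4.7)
The plan is a devissage reducing to Lemma \ref{lem:simple-ext}. Since $u_0$ is an open embedding between varieties of equal dimension $n-1$ (compatible with the stratifications, by Lemma \ref{lem:chart}), the functor $u_0^*$ is $t$-exact for the perverse $t$-structure, and in particular preserves the perverse heart. Inside the heart we have the standard identification $\Hom_{\Dmix}(\cF, \cG[i]) \cong \mathrm{Ext}^i(\cF, \cG)$ for $i = 0, 1$. The target assertion, full faithfulness, amounts to isomorphism at $i = 0$.

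To make the induction work cleanly, I will prove the following coupled statement for all $\cF, \cG \in \Perv^\mix((\ocGr_\ffw)_0/T, \bk)$: the natural map
\[
\Hom_{\Dmix}(\cF, \cG[i]) \longrightarrow \Hom_{\Dmix}(u_0^*\cF, u_0^*\cG[i])
\]
is an isomorphism for $i = 0$ and injective for $i = 1$. The proof proceeds by induction on the sum of the lengths of the filtrations of $\cF$ and $\cG$ supplied by Lemma \ref{lem:perv-filt}.

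For the base case, both $\cF$ and $\cG$ are of the form $\cE(I)\la k\ra$ or $\bar\cE(I)\la k\ra$ with $I \subsetneq [n]$ (the case $I = [n]$ cannot arise in a subquotient, since $\cE([n]) = \ubk_{\ocGr_\ffw}\{n\}$ is not supported on $(\ocGr_\ffw)_0$). Both assertions then follow directly from Lemma \ref{lem:simple-ext}, which in fact delivers isomorphisms for both $i = 0$ and $i = 1$. For the inductive step, choose without loss of generality a short exact sequence $0 \to \cF' \to \cF \to \cF'' \to 0$ in $\Perv^\mix$ with $\cF', \cF''$ having strictly shorter filtrations. Applying $\Hom_{\Dmix}(-, \cG[\bullet])$ before and after $u_0^*$ yields a commutative ladder of long exact sequences, and two four-lemma chases produce the conclusion for $\cF$: isomorphism at $i = 0$ for $\cF$ follows from isomorphism at $i = 0$ for $\cF', \cF''$ together with injectivity at $i = 1$ for $\cF''$, while injectivity at $i = 1$ for $\cF$ follows from isomorphism at $i = 0$ for $\cF'$ together with injectivity at $i = 1$ for both $\cF'$ and $\cF''$.

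The main subtlety, rather than being a genuine obstacle, is that the two assertions must be coupled: the inductive step at $i = 0$ consumes injectivity at $i = 1$, while the inductive step at $i = 1$ consumes isomorphism at $i = 0$. This dependency is precisely why Lemma \ref{lem:simple-ext} was formulated to give isomorphisms in both degrees $0$ and $1$ at once, rather than just at $i = 0$; any weaker base case would not survive the first inductive step.
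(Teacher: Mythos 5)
Your proof is correct and amounts to essentially the same devissage as the paper's: filter both objects using Lemma~\ref{lem:perv-filt}, reduce to the building blocks $\cE(I)\la k\ra$ and $\bar\cE(I)\la k\ra$ via long exact sequences, and invoke Lemma~\ref{lem:simple-ext} (which indeed supplies isomorphisms at $i=0,1$) at the base. The organizational difference is that you run a single coupled induction on the total filtration length of $(\cF,\cG)$, whereas the paper runs three sequential inductions (first proving the $i=0$ isomorphism and then the $i=1$ injectivity for $\cF = \cE(I)$ with $\cG$ arbitrary, and only afterwards varying the first slot); your version is a bit more symmetric and handles the $\bar\cE(I)$ case in the first slot uniformly, which the paper leaves implicit. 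One small point worth making explicit: the paper's faithfulness argument (that $u_0^*$ kills no nonzero perverse sheaf because every stratum of $(\ocGr_\ffw)_0$ is met) is a quick standalone observation, whereas in your setup faithfulness comes out as the injectivity half of the $i=0$ statement; both are fine.
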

\begin{proof}
Since $\Affs_0$ meets every stratum in $(\ocGr_\ffw)_0$, the functor $u_0^*$ kills no nonzero object.  It follows immediately that $u_0^*$ is faithful.  

We will prove that $u_0^*$ is also full in the case where $\bk$ is not a field.  The proof in the field case is easier; the appropriate modifications are left to the reader.

For any $\cF \in \Perv((\ocGr_\ffw)_0/T,\bk)$, we first claim that
\begin{equation}\label{eqn:perv-ff1}
\Hom(\cE(I), \cF) \to \Hom(u_0^*\cE(I), u_0^*\cF)
\end{equation}
is an isomorphism.  The proof is by induction on the length of a filtration of $\cF$ as in Lemma~\ref{lem:perv-filt}.  If $\cF$ itself is isomorphic to some $\cE(J)\la k\ra$ or $\bar\cE(J)\la k\ra$, the claim holds by (the $i = 0$ case of) Lemma~\ref{lem:simple-ext}.  For general $\cF$, the claim follows by a five-lemma argument involving the $i = 1$ case of Lemma~\ref{lem:simple-ext}.

Next, we claim that for any $\cF \in \Perv((\ocGr_\ffw)_0/T,\bk)$, the map
\begin{equation}\label{eqn:perv-ff2}
\Hom(\cE(I), \cF[1]) \to \Hom(u_0^*\cE(I), u_0^*\cF[1])
\end{equation}
is injective.  Again, if $\cF$ is isomorphic to $\cE(J)\la k\ra$ or $\bar\cE(J)\la k\ra$, the claim holds by Lemma~\ref{lem:simple-ext}.  The general case follows by induction, the four-lemma, and Lemma~\ref{lem:simple-ext} again.

It remains to show that for all $\cG \in \Perv((\ocGr_\ffw)_0/T,\bk)$, the map
\begin{equation}\label{eqn:perv-ff3}
\Hom(\cG, \cF) \to \Hom(u_0^*\cG, u_0^*\cF)
\end{equation}
is an isomorphism.  This holds by induction on the length of a filtration of $\cG$ as in Lemma~\ref{lem:perv-filt}, using~\eqref{eqn:perv-ff1}, \eqref{eqn:perv-ff2}, and the four-lemma.
\end{proof}

\subsection{The nearby cycles sheaf}
\label{ss:nearby2}

It makes sense to copy the definitions of direct sums of parity sheaves from Section~\ref{sec:direct-sum}: we may speak, for instance, of (analogues of) $\bEr_i$ or $\bEtot$ on $\ocGr_\ffw$.  We do not know whether all the lemmas from Section~\ref{sec:direct-sum} hold on $\ocGr_\ffw$, but there are a few steps in the calculation that rely only on those cases of Proposition~\ref{prop:aff-unit-sum} that overlap with Proposition~\ref{prop:glob-unit-sum}: see Remarks~\ref{rmk:unit-ok1}, \ref{rmk:unit-ok3}, and~\ref{rmk:unit-okz}.

In particular, by Remark~\ref{rmk:unit-okz}, it makes sense to consider the object
\[
\bZ = 
\begin{tikzcd}
\bEtot \ar[loop right, distance=30, "{\sst[1]}"{description, pos=0.8}, "\Bepr+\Betr-\bxi\bN" pos=0.3]
\end{tikzcd}
\]
in $\Dmix((\ocGr_\ffw)_0/T,\bk)$.  We will sometimes write $\bZ^{\Aff}$ and $\bZ^{\ocGr_\ffw}$ to distinguish the object defined here from that defined in Section~\ref{sec:nearby}.

\begin{lem}\label{lem:chart-psi}
For any $\cF \in \Dmix_\Gm((\ocGr_\ffw)_\gen/T,\bk)$, there is a natural isomorphism
\[
u_0^* \Psi_f(\cF) \cong \Psi_f(u_\gen^* \cF).
\]
Moreover, this isomorphism is compatible with the monodromy endomorphisms.
\end{lem}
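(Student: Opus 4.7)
The plan is to decompose the nearby cycles functor $\Psi_f = \Mon^{-1}\bi^*\bj_*\cJ({-})\la -2\ra$ and verify that each constituent functor commutes with pullback along the open embedding $u$ (or its restrictions $u_0$, $u_\gen$). By Lemma~\ref{lem:chart}, $u$ is an open embedding with $u^{-1}((\ocGr_\ffw)_0) = \Affs_0$ and $u^{-1}((\ocGr_\ffw)_\gen) = \Affs_\gen$, yielding cartesian squares that relate $u$, $u_0$, and $u_\gen$ to the inclusions $\bi$ and $\bj$ on both $\Aff$ and $\ocGr_\ffw$.

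First, since all vertical arrows in these cartesian squares are open embeddings, base change provides natural isomorphisms
\[
u^* \bj^{\ocGr_\ffw}_* \cong \bj^{\Aff}_*\, u_\gen^*
\qquad\text{and}\qquad
u_0^*\, \bi^{\ocGr_\ffw,*} \cong \bi^{\Aff,*}\, u^*.
\]

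Next, I would verify that both $\cJ$ and $\Mon$ commute with pullback along the open embedding. The formula for $\Mon$ in \cite[\eqnmonconcrete]{a:hgps} and the construction of $\cJ$ in \cite[\secjordan]{a:hgps} are both natural in the underlying graded parity sheaf and its differentials; since $u^*$ acts pointwise on parity sheaves and preserves the relevant $\uEnd$-structures, one obtains $u_\gen^* \cJ \cong \cJ\, u_\gen^*$ and $u_0^* \Mon \cong \Mon\, u_0^*$. Because $\Mon$ is an equivalence on $\Affs_0$ by~\eqref{eqn:mon-equiv}, the latter isomorphism may be inverted to produce $u_0^* \Mon^{-1} \cong \Mon^{-1} u_0^*$ on the essential image of $\Mon$ on $(\ocGr_\ffw)_0$.

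Splicing these isomorphisms together yields
\[
u_0^* \Psi_f(\cF) \cong \Mon^{-1}\,\bi^{\Aff,*}\,\bj^{\Aff}_*\, \cJ(u_\gen^* \cF)\la -2\ra = \Psi_f(u_\gen^* \cF).
\]
For the monodromy compatibility, recall from \cite[\defnmonodromy]{a:hgps} that $\Nilp_\Psi$ is characterized by $\Mon(\Nilp_\Psi) = \sr \cdot \id$. Since pullback commutes with the canonical $R^\vee$-action (in particular with $\sr \cdot \id$), the isomorphism constructed above automatically intertwines the two monodromy endomorphisms.

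The main anticipated obstacle is verifying that $\cJ$ and $\Mon$ commute with pullback along $u$. Once this naturality is extracted from their definitions in \cite{a:hgps}, the rest of the argument is a formal diagram chase using base change and the compatibility of $\la -2\ra$ with all functors. No further input about the geometry of $\ocGr_\ffw$ beyond Lemma~\ref{lem:chart} is needed.
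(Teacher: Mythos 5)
Your proposal follows essentially the same approach as the paper's proof: both decompose $\Psi_f$ into its constituent functors and verify that each commutes with pullback along $u$ (the recollement/base-change isomorphisms for $\bi^*$ and $\bj_*$, together with the observation that $\cJ$ and $\Mon$ commute with open restriction), and both obtain the monodromy compatibility from the characterization $\Mon(\Nilp_\Psi) = \sr\cdot\id$. Your version spells out a few more details (e.g.\ explicitly inverting $\Mon$ via~\eqref{eqn:mon-equiv}), but the argument is the same.
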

This lemma is an instance of a very general fact about the commutativity of nearby cycles and restriction to an open subset.
\begin{proof}
Let $\bi: (\ocGr_\ffw)_0 \hookrightarrow \ocGr_\ffw$ and $\bj: (\ocGr_\ffw)_\gen \hookrightarrow \ocGr_\ffw$ be the inclusion maps, and let $\bi'$ and $\bj'$ be their analogues for $\Aff$.  It is an exercise in the recollement formalism to show that $u^* \circ \bj_* \cong \bj'_* \circ u_\gen^*$ and that $u_0^* \circ \bi^* \cong (\bi')^* \circ u^*$.  It follows immediately from the definitions that $u_0^*$ commutes with $\Mon$, and that $u_\gen^*$ commutes with $\cJ$.  The result follows.
\end{proof}

\begin{thm}\label{thm:nearby2}
On $\ocGr_\ffw$, we have $\Psi_f(\cE_\gen) \cong \bZ\la -1\ra$.  In particular, $\Psi_f(\cE_\gen)$ is a perverse sheaf. The monodromy endomorphism is given by the map  $\bN: \bZ \to \bZ\la 2\ra$.
\end{thm}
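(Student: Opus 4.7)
The plan is exactly the ``roundabout approach'' flagged at the start of Section~\ref{sec:nearby2}: deduce the theorem from its affine-space counterpart (Theorem~\ref{thm:nearby}) via the open embedding $u:\Aff \hookrightarrow \ocGr_\ffw$ of Lemma~\ref{lem:chart}, together with the full faithfulness of $u_0^*$ on perverse sheaves (Proposition~\ref{prop:chart-ff}).  A direct calculation modelled on Sections~\ref{sec:direct-sum}--\ref{sec:nearby} is unavailable, because those arguments invoke Proposition~\ref{prop:aff-unit-sum} for all subsets $I \subset [n]$, whereas on $\ocGr_\ffw$ we only have its weaker analogue Proposition~\ref{prop:glob-unit-sum}, valid for $|I| \le n-2$.

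First I would assemble the pullback isomorphism.  By Lemma~\ref{lem:chart-psi}, the equality $u_\gen^*\cE_\gen^{\ocGr_\ffw} = \cE_\gen^{\Aff}$, and Theorem~\ref{thm:nearby}, we obtain
\[
u_0^*\Psi_f(\cE_\gen^{\ocGr_\ffw}) \cong \Psi_f(\cE_\gen^{\Aff}) \cong \bZ^{\Aff}\la -1\ra,
\]
and this isomorphism intertwines $u_0^*\Nilp_\Psi$ with $\bN^{\Aff}$.  On the other hand, Lemma~\ref{lem:chart-parity} yields $u_0^*\bZ^{\ocGr_\ffw} \cong \bZ^{\Aff}$: both the underlying graded parity sheaves and the structural morphisms $\aep_i, \aet_i$ out of which $\Bepr$, $\Betr$, and $\bN$ are assembled go over to their $\Aff$-analogues.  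Composing these two identifications produces an isomorphism $u_0^*\Psi_f(\cE_\gen^{\ocGr_\ffw}) \simto u_0^*\bZ^{\ocGr_\ffw}\la -1\ra$ that intertwines $\Nilp_\Psi$ with $\bN^{\ocGr_\ffw}$ after applying $u_0^*$.

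Before invoking Proposition~\ref{prop:chart-ff}, I need to verify that both objects are perverse.  For $\bZ^{\ocGr_\ffw}$, Remark~\ref{rmk:unit-okz} guarantees it is well defined in $\Dmix((\ocGr_\ffw)_0/T,\bk)$, and as in the last paragraph of the proof of Theorem~\ref{thm:nearby} its underlying graded parity sheaf is a direct sum of shifts of the perverse parity sheaves $\cE(I)$, hence is perverse.  For $\Psi_f(\cE_\gen^{\ocGr_\ffw})$, I would use that $u_0^*$ is t-exact (open embedding) and that it kills no nonzero perverse sheaf (implicit in Proposition~\ref{prop:chart-ff}); since $u_0^*\Psi_f(\cE_\gen^{\ocGr_\ffw}) \cong \bZ^{\Aff}\la -1\ra$ is perverse, the perverse cohomology sheaves ${}^p\coh^i(\Psi_f(\cE_\gen^{\ocGr_\ffw}))$ for $i \ne 0$ have zero $u_0^*$-image and must therefore vanish.

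Finally, by the full faithfulness of $u_0^*$ on perverse sheaves, the isomorphism lifts uniquely to $\Psi_f(\cE_\gen^{\ocGr_\ffw}) \simto \bZ^{\ocGr_\ffw}\la -1\ra$ on $\ocGr_\ffw$.  The same principle identifies the monodromy: $\Nilp_\Psi$ and $\bN^{\ocGr_\ffw}$ are two morphisms between perverse sheaves with equal $u_0^*$-images, so Proposition~\ref{prop:chart-ff} forces $\Nilp_\Psi = \bN^{\ocGr_\ffw}$.  The main obstacle in this plan is the perversity verification for $\Psi_f(\cE_\gen^{\ocGr_\ffw})$, which is the only step beyond purely formal transport across an open embedding; everything else is bookkeeping.
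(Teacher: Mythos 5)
Your proposal matches the paper's proof of Theorem~\ref{thm:nearby2} step for step: pull back via $u_0^*$ using Lemmas~\ref{lem:chart-psi} and~\ref{lem:chart-parity} together with Theorem~\ref{thm:nearby}, check perversity via $t$-exactness and conservativity of $u_0^*$ on perverse sheaves, and then descend the isomorphism and the monodromy endomorphism using the full faithfulness of Proposition~\ref{prop:chart-ff}. Your explicit check that $\bZ^{\ocGr_\ffw}$ is perverse (needed to invoke Proposition~\ref{prop:chart-ff}) is a small point the paper leaves implicit, but the argument is otherwise identical.
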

\begin{proof}
In the body of this proof, all sheaves will be labelled with a superscript indicating the variety on which they live. By Lemma~\ref{lem:chart-psi}, we have
\begin{equation}\label{eqn:nearby2}
u_0^* \Psi_f(\cE^{\ocGr_\ffw}_\gen) \cong \Psi_f(u_\gen^* \cE^{\ocGr_\ffw}_\gen) \cong \Psi_f(\cE^{\Aff}_\gen).
\end{equation}
On the other hand, it follows from Lemma~\ref{lem:chart-parity} that
\[
u_0^* \bZ^{\ocGr_\ffw} \cong \bZ^{\Aff}.
\]
Combining these observations with Theorem~\ref{thm:nearby}, we see that
\[
u_0^* \Psi_f(\cE^{\ocGr_\ffw}_\gen) \cong u_0^* \bZ^{\ocGr_\ffw}\la -1 \ra.
\]
Theorem~\ref{thm:nearby} also tells us that the right-hand side of~\eqref{eqn:nearby2} is perverse. Since $u_0^*$ is $t$-exact and kills no nonzero perverse sheaf, we see that $\Psi_f(\cE^{\ocGr_\ffw}_\gen)$ must be perverse as well.  Then, by Proposition~\ref{prop:chart-ff}, we conclude that $\Psi_f(\cE_\gen^{\ocGr_\ffw}) \cong \bZ^{\ocGr_\ffw}\la -1\ra$.  Since~\eqref{eqn:nearby2} identifies the monodromy endomorphisms on both sides, the description of this map in Theorem~\ref{thm:nearby} remains valid on $\ocGr_\ffw$.
\end{proof}

\section{The monodromy filtration}
\label{sec:monodromy}

The discussion in this section applies to both $\Aff$ and $\ocGr_\ffw$.  The nilpotent endomorphism $\bN: \bZ \to \bZ\la 2\ra$ determines a canonical filtration on $\bZ$, as described in the following lemma.  This filtration is called the \emph{monodromy filtration}.

\begin{lem}\label{lem:monfilt}
There is a unique increasing filtration $M_\bullet\bZ$ on $\bZ$ with the following properties:
\begin{enumerate}
\item For all $i$, we have $\bN(M_i\bZ) \subset (M_{i-2}\bZ)\la 2\ra$.
\item For $i \ge 0$, the map $\bN^i: \bZ \to \bZ\la 2i\ra$ induces an isomorphism
\[
\gr^M_i \bZ \simto \gr^M_{-i}\bZ\la 2i\ra.
\]
\end{enumerate}
\end{lem}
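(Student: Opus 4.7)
The plan is to reduce the statement to the classical existence and uniqueness theorem for the monodromy (or weight) filtration attached to a nilpotent endomorphism of an object in an abelian category, appropriately adapted to the Tate-twisted setting. By Theorems~\ref{thm:nearby} and~\ref{thm:nearby2}, $\bZ$ is a mixed perverse sheaf, so everything can be carried out in the abelian category $\Perv^\mix$.

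First, I would verify that $\bN: \bZ \to \bZ\la 2\ra$ is a nilpotent morphism in $\Perv^\mix$. That $\bN$ commutes with the differential $\Bepr + \Betr - \bxi\bN$ of $\bZ$ follows from Lemma~\ref{lem:epsilon3}(2) (together with the trivial commutation of $\bN$ with $\bxi\bN$), so $\bN$ is indeed a morphism in $\Dmix$, and hence in $\Perv^\mix$. For nilpotence, observe that the direct sum decomposition $\bEtot = \bigoplus_{i=0}^{n-1}\bEr_i$ is preserved by $\bN$, and the restriction of $N$ to each $\bEr_i$ is the standard Jordan block of size $n-i$ (with successive summands indexed by $j = 0, \ldots, n-i-1$ carrying Tate twists $-n+i+1+2j$); in particular $\bN^n = 0$ as a morphism $\bZ \to \bZ\la 2n\ra$.

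Having set up $\bN$ as a nilpotent endomorphism (up to a Tate twist), the lemma follows from the classical theorem of Deligne on the existence and uniqueness of the monodromy filtration of a nilpotent endomorphism of an object in an abelian category. That theorem is typically phrased for ordinary endomorphisms $N: V \to V$, but the construction (and the uniqueness argument, proceeding by induction on the nilpotency degree) use only iterated powers $\bN^i$ together with kernels and images in the abelian category, and the defining isomorphisms $\gr_i^M \bZ \simto \gr_{-i}^M \bZ\la 2i\ra$ are the natural Tate-twisted analogues of the classical $\gr_i^M \simto \gr_{-i}^M$. The main (minor) technical point will be consistent bookkeeping of Tate twists throughout; no new ideas are required beyond Deligne's.
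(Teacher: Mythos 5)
Your proposal is correct and takes essentially the same approach as the paper: reduce to the classical existence–uniqueness statement for the monodromy filtration of a nilpotent operator in an abelian category (the paper cites Steenbrink--Zucker, Proposition~2.1, which is Deligne's result), keeping track of Tate twists. The paper makes the inductive characterization explicit (the three conditions determining $M_\bullet\bZ$ once a bound $\bN^m = 0$ is fixed), whereas you defer to the classical argument, and you give a direct Jordan-block reason for $\bN^n = 0$ where the paper just cites [A, Remark~5.6]; these are cosmetic differences, not a different route.
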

The analogous statement for a nilpotent operator on a vector space is well known: see, for instance,~\cite[Proposition~2.1]{sz:vmhs1}, from which the following proof is adapted.
\begin{proof}
Let $m > 0$ be such that $\bN^m = 0$.  (Of course, such an $m$ exists by~\cite[\rmkmonnilp]{a:hgps}.)  If the desired filtration exists, it must have the following properties:
\begin{enumerate}
\item $M_k\bZ = 0$ for all $k \le -m$.
\item For $i \ge 0$, $M_i\bZ$ is the preimage under $\bN^{i+1}$ of $M_{-i-2}\bZ\la 2+2i\ra$.
\item For $i > 0$, $M_{-i}\bZ = \bN^i(M_i\bZ)\la -2i\ra$.
\end{enumerate}
The second condition above is a restatement of the fact that $\bN^{i+1}: \gr^M_{i+1}\bZ \to \gr^M_{-i-1}\bZ\la 2i+2\ra$ is injective, and the third corresponds to the fact that $\bN^i: \gr^M_i\bZ \to \gr^M_{-i}\bZ\la 2i\ra$ is surjective.

But it is now easy to see that the three conditions above actually determine a unique filtration on $\bZ$.
\end{proof}

As explained in~\cite[Remark~2.3]{sz:vmhs1} (see also~\cite[\S3.4]{ill:srcnc}), there is an explicit formula for the monodromy filtration in terms the kernel and image filtrations: we have
\[
M_k\bZ = \sum_{p - q = k} (\ker \bN^{p+1}) \cap (\im \bN^q\la -2q\ra).
\]

\begin{thm}
The associated graded of the monodromy filtration on $\bZ$ is given by
\[
\gr^M_k\bZ \cong \bigoplus_{\substack{r, s \ge 0 \\ r-s = w}}
\bEp_{n-1-r-s}\la w\ra.
\]
\end{thm}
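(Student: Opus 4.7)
The plan is to exhibit the monodromy filtration directly on $\bEtot$ as a filtration by ``absolute Tate twist,'' and then to verify that it satisfies the uniqueness characterization of Lemma~\ref{lem:monfilt}. Concretely, for each $k \in \Z$ I would set $M_k\bEtot \subset \bEtot$ equal to the direct sum of those summands $\bEp_i\la -n+i+1+2j\ra$ of $\bEtot = \bigoplus_i \bEr_i$ whose Tate twist is $\le k$. The target formula then predicts that $\gr^M_k\bZ$ should consist of exactly the summands with Tate twist equal to $k$: the substitution $r = j$, $s = n-1-i-j$ turns $-n+i+1+2j=k$ with $0 \le i \le n-1$, $0 \le j \le n-i-1$ into $r-s=k$ with $r,s \ge 0$, and gives the summand as $\bEp_{n-1-r-s}\la k\ra$.

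The first key step is to check that the differential $\delta = \Bepr + \Betr - \bxi\bN$ preserves this filtration, so that $(M_k\bEtot, \delta|_{M_k})$ defines a genuine subobject $M_k\bZ \subset \bZ$. Using $\bep\colon \bEp_i\la a\ra \to \bEp_{i-1}\la a-1\ra[1]$ and an analogous formula for $\bet$, together with the explicit matrices of Section~\ref{ss:dsum2}, one sees that both $\Bepr$ and $\Betr$ strictly decrease the absolute Tate twist by~$1$. The term $\bxi\bN$ similarly lowers it by~$2$: the nilpotent $\bN$ shifts the internal summand index within each $\bEr_i$ by $-1$ while mapping into the $\la 2\ra$-shift of the target, so it preserves absolute Tate twist; the subsequent multiplication by $\bxi$, of bidegree $(1,2)$, then contributes an additional $\la -2\ra$. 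Because every term of $\delta$ strictly lowers the Tate twist, the induced differential on each $\gr^M_k\bEtot$ vanishes, and the description of $\gr^M_k\bZ$ above follows in $\Dmix$.

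Next, I would verify the two axioms of Lemma~\ref{lem:monfilt}. Condition~(1), $\bN(M_k\bZ) \subset M_{k-2}\bZ\la 2\ra$, follows from the analysis above together with the fact that the filtration on $\bZ\la 2\ra$ induced by $M_\bullet$ is still indexed by absolute Tate twist, so the filtration index drops by~$2$. For Condition~(2), the iterate $\bN^i$ acts within each $\bEr_\ell$ as the standard $i$-fold shift of the internal summand index, which for $\ell \le n-1-i$ carries the unique summand of $\gr^M_i\bZ$ coming from $\bEr_\ell$ (at index $j=(i+n-\ell-1)/2$) to the unique summand of $\gr^M_{-i}\bZ\la 2i\ra$ coming from $\bEr_\ell$ (at index $j-i$) by an identity on the underlying parity sheaf, while for $\ell > n-1-i$ both pieces vanish. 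Hence $\bN^i$ is a summand-by-summand isomorphism $\gr^M_i\bZ \simto \gr^M_{-i}\bZ\la 2i\ra$, and by uniqueness $M_\bullet$ is the monodromy filtration.

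The main obstacle I anticipate is purely bookkeeping: keeping straight the distinction between the \emph{absolute Tate twist} of a summand of $\bEtot$, which is what the filtration tracks, and the \emph{internal summand index} within each $\bEr_i$, which is what $\bN$ actually moves. In particular, $\bN$ does not shift Tate twists on the underlying parity sheaf at all; the ``weight drop by $2$'' required by Condition~(1) comes entirely from the $\la 2\ra$ in the target of $\bN$. Handling this carefully is the main subtlety in making the Tate-twist filtration check out as the monodromy filtration.
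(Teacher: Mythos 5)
Your proposal is correct, and it arrives at the same filtration as the paper (the filtration on $\bEtot$ by ``absolute Tate twist $\le k$''), but by a genuinely different route. The paper's proof invokes the explicit kernel--image formula
\[
M_k\bZ = \sum_{p-q=k}(\ker\bN^{p+1}) \cap (\im \bN^q\la -2q\ra)
\]
(citing Steenbrink--Zucker) and then performs a somewhat lengthy computation of $\ker \bN^{p+1}$, $\im \bN^q$, and their intersections and sums on each $\bEr_i$, reindexes, and only at the end observes (without spelling out the details) that the resulting filtration on $\bEtot$ is preserved by the differential and that the conditions of Lemma~\ref{lem:monfilt} hold. You instead \emph{define} $M_k\bEtot$ directly as the Tate-twist filtration and then verify the two axioms of Lemma~\ref{lem:monfilt} by hand: condition~(1) because $\bN$ preserves Tate twist on the underlying parity sheaf while its target carries a $\la 2\ra$, and condition~(2) by the summand-by-summand analysis of how $\bN^i$ shifts the internal index within each $\bEr_\ell$. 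This buys you a cleaner argument (no intersections of kernels and images, no $\min/\max$ bookkeeping) at the cost of explicitly checking condition~(2), which the paper implicitly outsources to the formula it cites. Your observations that $\Bepr$, $\Betr$ drop the Tate twist by exactly~$1$ while $\bxi\bN$ drops it by~$2$ (so the induced differential on $\gr^M_k$ vanishes), and that the ``weight drop by $2$'' in axiom~(1) comes entirely from the $\la 2\ra$ in the target of $\bN$, are both correct and are precisely the points the paper summarizes as ``degree considerations.''
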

\begin{proof}
We begin with a calculation on the underlying graded parity sheaf $\bEtot$ of $\bZ$.  Unpacking the definition of $\bEtot$, we have
\[
\bEtot = \bigoplus_{i=0}^{n-1} \bEr_i = \bigoplus_{i=0}^{n-1} \bigoplus_{j=1}^{n-i} \bEp_i\la -n+i-1 + 2j\ra.
\]
Let us rewrite this sum by making the following substitutions: let $k = -n+i-1+2j$, $q = n-i-j$, and $p=j-1$.  Then $-n+1 \le k \le n-1$.  We have $q \ge 0$ and $p \ge 0$, $p - q = k$, and $p+q = n-i-1$.  Then
\[
\bEtot
= \bigoplus_{k=-n+1}^{n-1} \bigoplus_{\substack{p, q \ge 0\\ p-q =k}} \bEp_{n-1-p-q}\la k\ra.
\]

Next, the map $\bN$ is the direct sum of operators $N = N_{\bEr_i}: \bEr_i \to \bEr_i\la 2\ra$ for $i = 0,\ldots, n-1$.  Since $\bEtot$ is a (mixed) perverse sheaf, it makes sense to consider the kernels and images of these operators.  From the definitions, we have
\[
\ker N_{\bEr_i}^{p+1} = \bigoplus_{j=1}^{p+1} \bEp_i\la -n+i-1+2j\ra,
\qquad
\im N_{\bEr_i\la -2q\ra}^q = \bigoplus_{j=1}^{n-i-q} \bEp_i\la -n+i-1+2j\ra
\]
\[
(\ker N_{\bEr_i}^{p+1}) \cap (\im N_{\bEr_i\la -2q\ra}^q) =
\bigoplus_{j=1}^{\min\{p+1,n-i-q\}} \bEp_i\la -n+i-1+2j\ra.
\]
Now let $p$ and $q$ vary, subject to the constraint that $p - q = k$.  Let $a = \min\{p+1,n-i-q\}$, and let $b = \max \{p+1,n-i-q\}$.  We have $a \le b$ and $a+b = n-i+1+k$, so $2a \le n-i+1+k$.  We conclude that
\[
\sum_{p -q = k} (\ker N_{\bEr_i}^{p+1}) \cap (\im N_{\bEr_i\la -2q\ra}^q) =
\bigoplus_{j=1}^{\lfloor (n-i+1+k)/2\rfloor} \bEp_i\la -n+i-1+2j\ra.
\]
Let $r = j - 1$ and $s = n-i-j$.  The conditions $1 \le j \le (n-i+1+k)/2$ are equivalent to $r \ge 0$ and $r - s \le k$.  
\[
\sum_{p -q = k} (\ker N_{\bEr_i}^{p+1}) \cap (\im N_{\bEr_i\la -2q\ra}^q) =
\bigoplus_{w \le k} \bigoplus_{\substack{r, s \ge 0 \\ r+s = n-i-1\\ r-s = w}}
\bEp_i\la w\ra.
\]
Now take the sum over all $i$.  We conclude that
\[
\sum_{p -q = k} (\ker \bN^{p+1}) \cap (\im \bN^q\la -2q\ra) =
\bigoplus_{w \le k} \bigoplus_{\substack{r, s \ge 0 \\ r-s = w}}
\bEp_{n-1-r-s}\la w\ra.
\]
Let $M_k\bEtot$ denote this graded parity sheaf.  Degree considerations show that the differential on $\bZ$ induces a differential on $M_k\bEtot$, so we obtain a well-defined mixed perverse sheaf $M_k\bZ$.  The resulting filtration $M_\bullet\bZ$ of $\bZ$ satisfies the conditions of Lemma~\ref{lem:monfilt}, so it must be the monodromy filtration.  The formula for the associated graded is immediate from this description.
\end{proof}

\section{Examples}
\label{sec:examples}

In this section, we unpack the definition of $\bZ$ and write it down explicitly for $n \le 3$. For $n = 1$, we have $\bEtot = \bEr_0 = \bEp_0 = \cE(\varnothing)$.  The maps $\bN$, $\Bepr$, and $\Betr$ are zero, so we just have
\[
\bZ = \cE(\varnothing)
\]
with zero differential.

For $n = 2$, we have
\begin{align*}
\bEtot &= \bEr_0 \oplus \bEr_1 = (\bEp_0\la -1\ra \oplus \bEp_0\la 1\ra) \oplus \bEp_1 \\
&= (\cE(\varnothing)\la -1\ra \oplus \cE(\varnothing)\la 1\ra) \oplus \cE(\{1\}) \oplus \cE(\{2\}).
\end{align*}
We arrange these summands in order by Tate twist, and then expand the definitions of $\bN$, $\Bepr$, and $\Betr$ to obtain
\[
\bZ =
\begin{tikzcd}[ampersand replacement=\&]
\cE(\varnothing)\la 1\ra \ar[r, "{\sst[1]}" description, "{[\begin{smallmatrix} \aet_1 \\ -\aet_2 \end{smallmatrix}]}" below=2] 
  \ar[rr, bend left=10, "{\sst[1]}" description, "-\bxi\cdot \id" above=2] \&
(\cE(1) \oplus \cE(2))
\ar[r, "{\sst[1]}" description, "{[\begin{smallmatrix} \aep_1 & -\aep_2 \end{smallmatrix}]}" below=2] \&
\cE(\varnothing)\la -1\ra
\end{tikzcd}
\]

Alternatively, we may work with \emph{parity sequences} (as defined in~\cite{amrw:fmmts}) in place of graded parity sheaves.  In this language, the shifts $[1]$ along the arrows are absorbed into the objects, and the whole picture looks more like a ``classical'' chain complex of parity sheaves.  (Objects in mixed modular derived categories in~\cite{ar:mpsfv2, ar:mpsfv3, amrw:fmmts} were drawn in this way.)  For $n =2$, our object $\bZ$ looks like
\[
\bZ =
\begin{tikzcd}[ampersand replacement=\&]
\cE(\varnothing)\{1\} \\
\cE(1) \oplus \cE(2) \ar[u, "{[\begin{smallmatrix} \aep_1 & -\aep_2 \end{smallmatrix}]}"] \\
\cE(\varnothing)\{-1\} \ar[u, "{[\begin{smallmatrix} \aet_1 \\ -\aet_2 \end{smallmatrix}]}"]
  \ar[uu, bend right=60, "-\bxi \cdot \id"']
\end{tikzcd}
\]
For $n = 3$, we have
\[
\bZ = 
\begin{tikzcd}[row sep=large,ampersand replacement=\&]
\cE(\varnothing)\{2\} \\
\cE(1)\{1\} \oplus \cE(2)\{1\} \oplus \cE(3)\{1\}  \ar[u, "{\left[\begin{smallmatrix} \aep_1 & -\aep_2 & \aep_3 \end{smallmatrix}\right]}"] \\
\cE(1,2) \oplus \cE(1,3) \oplus \cE(2,3) \oplus \cE(\varnothing) 
  \ar[u, "{\left[\begin{smallmatrix} \aep_2 & -\aep_3 & & \aet_1 \\
  \aep_1 & & -\aep_3 & -\aet_2 \\
  & \aep_1 & -\aep_2 & \aet_3
  \end{smallmatrix}\right]}"]
  \ar[uu, bend right=80, "{-\bxi \cdot \left[\begin{smallmatrix} 0 & 0 & 0 & \id \end{smallmatrix}\right]}"'] \\
\cE(1)\{-1\} \oplus \cE(2)\{-1\} \oplus \cE(3)\{-1\} 
  \ar[u, "{\left[\begin{smallmatrix} \aet_2 & \aet_1 & \\
  -\aet_3 & & \aet_1 \\
  & -\aet_3 & -\aet_2 \\
  \aep_1 & -\aep_2 & \aep_3
  \end{smallmatrix}\right]}"]
  \ar[uu, bend right=80, "{-\bxi \cdot \left[\begin{smallmatrix} \id & & \\ & \id & \\ & & \id \end{smallmatrix}\right]}"'] \\
\cE(\varnothing)\{-2\} \ar[u, "{\left[\begin{smallmatrix} \aet_1 \\ -\aet_2 \\ \aet_3 \end{smallmatrix}\right]}"]
  \ar[uu, bend right=80, "{-\bxi \cdot \left[\begin{smallmatrix} 0 \\ 0 \\ 0 \\ \id\end{smallmatrix}\right]}"']
\end{tikzcd}
\]

Finally, on $\ocGr_\ffw$, we can redraw these complexes using the Elias--Williamson calculus to indicate the maps in the differentials.  We will use different colors for the various simple reflections.  For brevity, we omit the convolution symbol $\star$.

For $n = 2$, using blue for $\color{blue}s_1$ and red for $\color{red}s_2$, we have
\newcommand{\bluebullet}{{\color{blue}\cE_{s_1}}}
\newcommand{\redbullet}{{\color{red}\cE_{s_2}}}
\[
\bZ =
\begin{tikzcd}[row sep=large,ampersand replacement=\&]
\sky\{1\} \\
\bluebullet\sky \oplus \redbullet\sky \ar[u, "{\left[\begin{smallmatrix} \blueupper & -\redupper \end{smallmatrix}\right]\id_\sky}"] \\
\sky\{-1\} \ar[u, "{\left[\begin{smallmatrix} \bluelower \\ -\redlower \end{smallmatrix}\right]\id_\sky}"]
  \ar[uu, bend right=60, "-\bxi \cdot \id"']
\end{tikzcd}
\]

For $n = 3$, we use red for $\color{red}s_1$, blue for $\color{blue}s_2$, and green for $\color{green}s_3$. We have
\renewcommand{\redbullet}{{\color{red}\cE_{s_1}}}
\renewcommand{\bluebullet}{{\color{blue}\cE_{s_2}}}
\newcommand{\greenbullet}{{\color{green}\cE_{s_3}}}
\[
\bZ =
\begin{tikzcd}[row sep=6.3em,ampersand replacement=\&]
\sky\{2\} \\
\redbullet\sky\{1\} \oplus \bluebullet\sky\{1\} \oplus \greenbullet\sky\{1\}  \ar[u, "{\left[\begin{smallmatrix} \redupper & -\blueupper & \greenupper \end{smallmatrix}\right]\id_\sky}"] \\
\bluebullet\redbullet\sky \oplus \redbullet\greenbullet\sky \oplus \greenbullet\bluebullet\sky \oplus \sky 
  \ar[u, "{\left[\begin{smallmatrix} \blueupper\redvert & -\redvert\greenupper & & \redlower \\
  \bluevert\redupper & & -\greenupper\bluevert & -\bluelower \\
  & \redupper\greenvert & -\greenvert\blueupper & \greenlower
  \end{smallmatrix}\right]\id_\sky}"]
  \ar[uu, bend right=82, "{-\bxi \cdot \left[\begin{smallmatrix} 0 & 0 & 0 & \id \end{smallmatrix}\right]}"']  \\
\redbullet\sky\{-1\} \oplus \bluebullet\sky\{-1\} \oplus \greenbullet\sky\{-1\} 
  \ar[u, "{\left[\begin{smallmatrix} \bluelower\redvert & \bluevert\redlower & \\
  -\redvert\greenlower & & \redlower\greenvert \\
  & -\greenlower\bluevert & -\greenvert\bluelower \\
  \redupper & -\blueupper & \greenupper
  \end{smallmatrix}\right]\id_\sky}"] 
  \ar[uu, bend right=82, "{-\bxi \cdot \left[\begin{smallmatrix} \id & & \\ & \id & \\ & & \id \end{smallmatrix}\right]}"'] \\
\sky\{-2\} \ar[u, "{\left[\begin{smallmatrix} \redlower \\ -\bluelower \\ \greenlower \end{smallmatrix}\right]\id_\sky}"]
  \ar[uu, bend right=82, "{-\bxi \cdot \left[\begin{smallmatrix} 0 \\ 0 \\ 0 \\ \id\end{smallmatrix}\right]}"']
\end{tikzcd}
\]


\end{document}